\title{The Frenet Immersed Finite Element Method for Elliptic Interface Problems: An Error Analysis}
\author{
  Slimane Adjerid$^\ast$ \thanks{ Department of Mathematics, Virginia Tech, Blacksburg, VA 24061 USA(adjerids@vt.edu, tlin@vt.edu)\\ $\ast$ Corresponding author.} \and 
  Tao Lin\footnotemark[1] \and
   Haroun Meghaichi\thanks{Department of Mathematics, The Ohio State University, Columbus, OH 43210 USA (meghaichi.1@osu.edu)}
}
\tikzset{
point/.style={circle,draw=black,inner sep=0pt,minimum size=3pt}
}
\pgfplotsset{
    soldot/.style={color=blue,only marks,mark=*}
    }
\pgfplotsset{compat=1.14}
\renewcommand{\[}{\begin{equation}}
\renewcommand{\]}{\end{equation}}
\renewcommand{\O}{\Omega}
\newcommand{\T}{\mathcal{T}}
\newcommand{\lessim}{\lesssim}
\newcommand{\id}{\mathrm{id}}
\newcommand{\z}{\mathbf{z}}
\newcommand{\ttau}{\boldsymbol{\tau}}
\newcommand{\hA}{\hat{A}}
\newcommand{\hgamma}{\hat{\gamma}}
\newcommand{\hp}{\hat{p}}
\newcommand{\he}{\hat{e}}
\newcommand{\hV}{\hat{\V}}
\newcommand{\hphi}{\hat{\phi}}
\newcommand{\hx}{\hat{\x}}
\newcommand{\hnabla}{\widehat{\nabla}}
\newcommand{\hGamma}{\hat{\Gamma}}
\newcommand{\hbeta}{\hat{\beta}}
\newcommand{\hK}{\hat{K}}
\renewcommand{\hbeta}{\hat{\beta}}
\newcommand{\hD}{\hat{D}}
\newcommand{\n}{\mathbf{n}}
\renewcommand{\H}{\mathcal{H}}
\newcommand{\y}{\mathbf{y}}
\newcommand{\cgamma}{\check{\gamma}}
\newcommand{\cp}{\check{p}}
\newcommand{\ce}{\check{e}}
 \newcommand{\cK}{\check{K}}
\newcommand{\cg}{\check{\g}}
\newcommand{\cn}{\check{\n}}
\newcommand{\ctau}{\check{\ttau}}
\newcommand{\cP}{\check{P}}
\newcommand{\cR}{\check{R}}
\newcommand{\cGamma}{\check{\Gamma}}
\newcommand{\cxi}{\check{\xi}}
\newcommand{\ccK}{\check{\check{K}}}
\newcommand{\tO}{\tilde{\O}}
\newcommand{\bb}[1]{\left\llbracket#1\right\rrbracket}
\newcommand{\cc}[1]{\left\{\!\!\left\{#1\right\}\!\!\right\}}
\newcommand{\diam}{\operatorname{diam}}
\renewcommand{\c}{\mathbf{c}}
\newcommand{\x}{\mathbf{x}}
\newcommand{\g}{\mathbf{g}}
\newcommand{\E}{\mathcal{E}}
\renewcommand{\L}{\mathscr{L}}
\newcommand{\norm}[1]{\left\lVert#1\right\rVert}
\renewcommand{\P}{\mathcal{P}}
\newcommand{\V}{\mathcal{V}}
\newcommand{\p}{\partial}
\newcommand{\comment}[1]{}
\newcommand{\cdelta}{
  \scaleto{
 \begin{tikzpicture}
   \draw[line width=.5pt] (0, 0) to[out=20, in=250] (.5, 0.866025) ;
  \draw[line width=.5pt] (.5, 0.866025) to[out=290, in=160] (1,0) ;
  \draw[line width=.5pt] (1,0) to[out=-180, in=0] (0, 0);
\end{tikzpicture}
}{8pt}}
\newlength{\subcolumnwidth}
\newenvironment{subcolumns}[1][0.45\columnwidth]
 {\valign\bgroup\hsize=#1\setlength{\subcolumnwidth}{\hsize}\vfil##\vfil\cr}
 {\crcr\egroup}
\newcommand{\nextsubcolumn}[1][]{%
  \cr\noalign{\hfill}
  \if\relax\detokenize{#1}\relax\else\hsize=#1\setlength{\subcolumnwidth}{\hsize}\fi
}
\newcommand{\vertiii}[1]{{\left\vert\kern-0.25ex\left\vert\kern-0.25ex\left\vert #1 
    \right\vert\kern-0.25ex\right\vert\kern-0.25ex\right\vert}}
\theoremstyle{definition}
\newtheorem{lemma}{Lemma}
\newtheorem{theorem}{Theorem}
\newcommand{\commentout}[1]{{}} 
\begin{document}
\maketitle

\begin{abstract}
    This article presents an error analysis of the recently introduced Frenet immersed finite element (IFE) method \cite{adjeridHighOrderGeometry2024}. The Frenet IFE space    
    employed in this method is constructed to be locally conforming to the function space of the associated weak form for the interface problem. This article further establishes a critical trace inequality for the Frenet IFE functions. These features enable us to prove that the Frenet IFE method converges optimally under mesh refinement in both $L^2$ and  energy norms.
\end{abstract}

\section{Introduction}\label{sec:intro}

In this paper, we provide an error analysis of the  Frenet IFE method introduced recently in \cite{adjeridHighOrderGeometry2024} for solving the elliptic interface problem
\begin{subequations}\label{eqn:Interface_problem}
\begin{equation}
    \begin{cases}
        -\nabla\cdot (\beta\nabla u) = f,& \text{on } \Omega^+\cup\Omega^-,\\
        u_{\mid \partial \Omega} =g,
    \end{cases}
   \label{eqn:PDE}
\end{equation}
where, $\O\subset\mathbb{R}^2$ is a domain split by an interface $\Gamma$ into two subdomains $\Omega^-$ and  $\Omega^+$, and the diffusion coefficient $\beta$ is a piecewise constant function $\beta|_{\O^{\pm}}=\beta^{\pm}$ with $\beta^+\ge \beta^->0$. In addition to the BVP \eqref{eqn:PDE}, the solution is assumed to satisfy the following interface conditions across $\Gamma$:
\begin{equation}
    \bb{u}_{\Gamma}=0,\qquad \bb{\beta\p_{\n}u}_{\Gamma}=0, \label{eqn:essential_jump_conditions}
\end{equation}
\end{subequations}
where $\bb{\cdot}$ denotes the jump across $\Gamma$, that is $\bb{v}_{\Gamma}=v^{+}|_{\Gamma}-v^{-}|_{\Gamma}$ if $v|_{\O^\pm}=v^{\pm}$, and $\n$ is the normal vector on $\Gamma$ from $\Omega^-$ towards $\Omega^+$. As in \cite{adjeridHighOrderGeometry2024}, we focus on the case where $f$ has sufficient regularity such that
\begin{equation}
    \bb{\beta\p_{\n^j}\Delta u}_{\Gamma}=0,\qquad j=0,1,\dots, m-2, 
    \label{eqn:extended_jump_condition}
\end{equation}
{where $\p_{\n^j}$ stands for the $j$-th order normal derivative.}  
The elliptic interface problem is ubiquitous in science and engineering  and is used to model many physical phenomena in inhomogeneous media. Hence, it has attracted a great attention from both scientists and engineers  in the last few decades \cite{chorinNumericalSolutionNavierStokes1968,gersborg-hansenTopologyOptimizationHeat2006,guyomarchDiscontinuousGalerkinMethod2009,wangModelingElectrostaticLevitation2008}.
The main challenge with the interface problem \eqref{eqn:Interface_problem} is that its solution has a low regularity around the interface due to the discontinuity of $\beta$, 
making traditional methods based on unfitted meshes extremely slow to converge, if they converge at all. To resolve this issue interface-fitted finite element methods were introduced and analyzed \cite{babuskaFiniteElementMethod1970,brambleFiniteElementMethod1996}. These conventional finite element methods require meshes constructed according to the interface (the so-called interface-fitted meshes) but use standard finite element functions to approximate the solution. However, the use of interface-fitted meshes can be prohibitively expensive when solving problems with evolving interfaces and/or very thin layers of inhomogeneities. This drawback motivated scientists  to investigate methods based on unfitted meshes for interface problems to take advantage of the computational flexibility they offer over fitted-mesh methods. 

The IFE method is one type of finite element methods that can use interface-unfitted meshes. 
A key idea of an IFE method is to use IFE functions that are piecewise polynomials on interface elements (those elements cut by the interface) constructed according to the interface conditions, while standard finite element functions in terms of polynomials are used on non-interface elements (those elements not cut by the interface). The IFE method is attractive because it can solve interface problems at the optimal convergence rate on meshes independent of the interface. The inception of the IFE method can be traced back to \cite{liImmersedInterfaceMethod1998}, where the author considered the one-dimensional version of \eqref{eqn:Interface_problem}. Subsequently, the IFE method has been extended  to elliptic interface problems in two and three dimensions \cite{
adjeridHigherDegreeImmersed2018,
guoGroupImmersedFiniteelement2019,
guoImmersedFiniteElement2020,
guzmánFiniteElementMethod2017,
heInteriorPenaltyBilinear2010,
linPartiallyPenalizedImmersed2015a,
linRectangularImmersedFinite2001,
},
transient interface problems that involve elliptic differential operators were investigated in  \cite{
adjeridErrorEstimatesImmersed2020, 
heImmersedFiniteElement2013, linMethodLinesBased2013,
linPartiallyPenalizedImmersed2015,
linOptimalErrorBounds2020} and problems for wave propagation in inhomogeneous media \cite{
adjeridImmersedDiscontinuousGalerkin2023,
adjeridErrorEstimatesImmersed2020,
adjeridImmersedDiscontinuousGalerkin2019,
guoErrorAnalysisSymmetric2021,
linSolvingInterfaceProblems2019}.
The IFE method has also been successfully applied to interface problems associated with a system of partial differential equations, such as Stokes system \cite{
adjeridImmersedDiscontinuousFinite2015,
adjeridImmersedDiscontinuousFinite2019,
jiImmersedCRP0Element2022} as well as  the linear elasticity system
\cite{
linLockingfreeImmersedFinite2013,
linLinearBilinearImmersed2012}. Furthermore, the IFE method has been applied successfully in three spatial dimensions to solve various interface problems \cite{
guoSolvingThreedimensionalInterface2021,
han3DImmersedFinite2016,
kafafyThreedimensionalImmersedFinite2005,
vallaghéTrilinearImmersedFinite2010}, and optimally converging high-order IFE methods have been successfully constructed in 
\cite{adjeridStudyHighorderImmersed2022,
adjeridHighDegreeImmersed2017,
adjeridPthDegreeImmersed2009,
adjeridEnrichedImmersedFinite2023,
guoHigherDegreeImmersed2019}



In the recent paper \cite{adjeridHighOrderGeometry2024}, the authors developed a novel IFE method for the interface problem \eqref{eqn:Interface_problem}, called the \text{Frenet} IFE method. 
The Frenet IFE functions used with this method are constructed with the essential differential geometry described by the Frenet apparatus of the interface curve which is assumed to be nonlinear in general, an idea reminiscent to the isoparametric finite elements. They are first constructed as piecewise polynomials in the Frenet coordinates, then mapped to the interface elements by the Frenet transformation. Therefore, in contrast to IFE methods in the literature, where the IFE functions are piecewise polynomials, the Frenet IFE functions are not piecewise polynomials on the interface elements. However, Frenet IFE functions have 
two distinct features. First, they can precisely satisfy the interface conditions \eqref{eqn:essential_jump_conditions}, whereas all other IFE functions in the literature can only satisfy these conditions approximately. This is an advantageous feature since the penalty required on the interface in previous IFE methods {\color{red}\cite{adjeridEnrichedImmersedFinite2023} } is no longer necessary. Second, we note that the local construction of Frenet IFE functions is robust and does not suffer from the notorious small-cut issue \cite{adjeridHigherDegreeImmersed2018}, hence, there is no need for user chosen parameters \cite{zhuangHighDegreeDiscontinuous2019} to alleviate this issue. Furthermore, we have proved in \cite{adjeridHighOrderGeometry2024} that the Frenet IFE space has the expected optimal approximation capability regarding the polynomial space in Frenet coordinates. Numerical results given \cite{adjeridHighOrderGeometry2024} demonstrate that the standard DG scheme based this Frenet IFE space 
can solve the interface problem \eqref{eqn:Interface_problem} with an optimal convergence rate
under mesh refinement and an exponential convergence under degree refinement. 
Our goal in the current article is to theoretically establish the optimal convergence for the Frenet 
IFE method developed in \cite{adjeridHighOrderGeometry2024}.

\commentout{
     where, unlike existing IFE methods, the Frenet  IFE shape functions on a physical interface element cut by a curved interface are, in general, not piecewise polynomials. Instead, the  interface element  is transformed such that the interface becomes a line, then a space of piecewise polynomials is constructed according to the interface conditions \eqref{eqn:essential_jump_conditions}, then the piecewise polynomial basis function are mapped back to the original element to form a basis of the local IFE space. This new approach has many advantages. For instance, the interface conditions \eqref{eqn:essential_jump_conditions} are satisfied exactly by the IFE shape functions. Hence, the penalty on the interface used in many IFE methods is no longer needed. Furthermore, the local IFE construction is robust and does not suffer from the notorious small-cut issue \cite{adjeridHigherDegreeImmersed2018} without the need for user chosen parameters \cite{zhuangHighDegreeDiscontinuous2019}. In \cite{adjeridHighOrderGeometry2024}, we provide several  numerical examples showing the optimal convergence of the  Frenet IFE method under mesh refinement and exponential convergence under degree refinement, as well as an optimal $O(h^{m+1})$ approximation estimate for the Frenet IFE space. The missing rigorous error analysis in \cite{adjeridHighOrderGeometry2024} for the Frenet IFE method is addressed in this manuscript. 
} 

Since the Frenet IFE method to be analyzed in this manuscript is based on a symmetric interior penalty discontinuous Galerkin (SIPDG) weak formulation, the error analysis presented here follows the reasoning employed in the analysis of the SIPDG method \cite{arnoldInteriorPenaltyFinite1982,wheelerEllipticCollocationfiniteElement1978,riviereDiscontinuousGalerkinMethods2008} and the partially penalized IFE method \cite{adjeridHighDegreeImmersed2017,guoHigherDegreeImmersed2019,linPartiallyPenalizedImmersed2015a} where the trace inequalities play a crucial role. 
In a nutshell, the SIPDG formulation contains a stabilization term that penalizes the jump across the edges between elements, and a trace inequality is derived to ensure that for a large enough penalty parameter, the discrete bilinear form is coercive. After that, the error estimates follow from the standard arguments. In the Frenet IFE method, the shape functions on the physical interface elements are generally not piecewise polynomials, and as a result, the standard trace inequality does not hold.
To overcome this obstacle, we use the Frenet coordinates $(\eta, \xi)$ to define a locally invertible Frenet transformation that maps a rectangular interface element (cut by a curved interface) to a reference quadrilateral interface element with four curved edges cut by the vertical straight line $\eta=0$. IFE functions are piecewise polynomials in the Frenet coordinates, but the traditional trace inequality cannot be applied directly in this situation because the reference interface element has curved edges. We circumvent this issue by, first, proving a trace inequality for polynomials on the reference interface element in Frenet coordinates on a curved quadrilateral, then decomposing the IFE space in the Frenet coordinate system into a subspace of piecewise polynomials in terms of $\beta^\pm$  and a subspace of polynomials that only depends on the geometry of the interface. After proving  the trace inequality for non polynomial Frenet functions on an interface element, the error analysis proceeds in the usual manner.  


\section{Preliminaries and notation}\label{sec:notation}
Without loss of generality, we assume that $\Omega\subset \mathbb{R}^2$ is a rectangular domain, and we assume that $\Gamma\subset \O$ splits $\O$ into two subdomains $\O^+$ and $ \O^-$ such that ${\p\O^+}\cap \p\O^-=\Gamma$. Given a measurable set $\tO\subset\O$ and $s\ge 0$, we use $H^{s}(\tO)$ to denote the standard Sobolev space $W^{s,2}(\tO)$ equipped with the Sobolev norm $\norm{\cdot}_{H^{s}(\tO)}$ and the semi-norm $|\cdot|_{H^s(\tO)}$. For convenience, we will use $(\cdot,\cdot)_{\p \tO}$ and $\norm{\cdot}_{L^2(\p\tO)}$ to denote the inner product and the norm of $L^2(\p\tilde{\O})$ when applicable.  If $\tO$ intersects both $\O^+$ and $\O^-$, we let $\tO^{\pm}= \tO\cap \O^{\pm}$, and  use $\H^s(\tO,\Gamma;\beta)$ to denote the immersed Sobolev space for $s>\frac{3}{2}$
\begin{equation}
    \H^s(\tO,\Gamma;\beta)=
    \left\{v\in L^2(\tO) \mid v|_{\tO^{\pm}}\in H^{s}(\tO),\  \bb{v}_{\Gamma\cap \tO}=\bb{\beta\p_{\n}v}_{\Gamma\cap \tO}=0 \right\},
\end{equation}
equipped with the broken Sobolev norm $\norm{\cdot}_{\H^s(\tO)}^2=\norm{\cdot}_{H^s(\tO^+)}^2+\norm{\cdot}_{H^s(\tO^-)}^2$ and the standard $L^2$ norm. Additionally, we use $\norm{\cdot}$ to denote the Euclidean norm of a vector and the Frobenius norm of a matrix. 

On $\Omega$, we consider a uniform rectangular mesh $\T_h$ independent of the interface, where $h$ is the diameter of each element. However, we note here that the derivations and results in this paper extend to non-uniform quadrilateral and triangular meshes. An element  $K\in \T_h$ whose interior intersects $\Gamma$ is classified as an interface element; otherwise, $K$ is a non-interface element. 
Let $\T_h^n$ and $\T_h^i$ denote the set of all non-interface elements and the set of interface elements of $\T_h$, respectively. Additionally, we use $\E_h, ~\E_h^b$ and $\E_h^{\circ}$ to denote the set of edges, boundary edges and interior edges of $\T_h$, respectively. On an interior edge $e\in \E_h$, $\bb{u}_e$ and $\cc{u}_e$ denote the jump and the average of $u$ on $e$, respectively, and if $e\in \E^b_h$, then $\bb{u}_e$ and $\cc{u}_e$ will be $u|_e$ \cite{riviereDiscontinuousGalerkinMethods2008}. Now, we define the discrete immersed Sobolev space 
\begin{equation}
    \H^{s}(\T_h,\Gamma;\beta)= 
    \left\{v\in L^2(\O) \mid v|_{K}\in H^s(K) \text{ if } K\in \T^n_h, \text{ else } 
        v|_K\in \H^s(K,\Gamma;\beta) 
        \right\}, ~~ s>\frac{3}{2}.
        \label{eqn:discrete_sobolev}
\end{equation}
From now on, we assume that the solution $u$ of  problem  \eqref{eqn:Interface_problem} is in $\H^{s}(\O,\Gamma;\beta)$. Therefore, $u$ is the solution of the weak problem
\begin{subequations} \label{eqn:weak_form}
\begin{equation}a_h(u,v)= L_h(v),\qquad 
    \forall v\in \H^s(\T_h,\Gamma;B),
\label{eqn:a_h_L_h_form}
\end{equation}
where $a_h: [\H^s(\T_h,\Gamma;B)]^2\to \mathbb{R}$ is the symmetric bilinear form 
\begin{equation}
    a_h(u,v) = \sum_{K\in \T_h}(\beta \nabla u,\nabla v)_{K} -
    \sum_{e\in \E_h}
    \left(
     \left(\bb{\beta\p_\n u}_e,\cc{v}_e\right)_e
    +\left(\bb{\beta\p_\n v}_e,\cc{u}_e\right)_e
    -\frac{\sigma_0\gamma}{h}\left(\bb{u}_e,\bb{v}_e\right)_e\right),
    \label{eqn:a_h_expanded}
\end{equation}
and $L_h:\H^s(\T_h,\Gamma;B)\to \mathbb{R} $ is the linear form 
\begin{equation}
    L_h(v) = \sum_{K\in \T_h}\left(f,v\right)_K+\sum_{e\in\E_h^b} \left(-\beta \p_\n v+\frac{\sigma_0\gamma}{h} v,g\right)_e.
    \label{eqn:L_h_expanded}
\end{equation}
\end{subequations}
Here, $\gamma= \frac{(\beta^+)^2}{\beta^-}$ and $\sigma_0>0$ is a constant independent of the mesh size and $\beta^{\pm}$. 

In the remainder of the paper, we assume, without loss of generality, that $\Gamma$ is a Jordan curve parametrized by $\g:I=[\xi_s,\xi_e]\to \O$, hence,  $\g(\xi_e)=\g(\xi_s)$ and $\g|_{[\xi_s,\xi_e)}$ is injective. Additionally, we assume that $\g$ is a $C^{k}$ regular parametrization, where $k\ge 2$. For the construction of the Frenet transformation, we only need $\g\in C^{2}$. However, we will require that $\g \in C^{m+1}$ later in the paper to derive formulas for the Laplacian and to obtain the error estimates for polynomial approximations of degree $m+1$. Regularity in this context means that $\g'(\xi)\ne \mathbf{0}$ for any $\xi \in I$, which allows us to define the unit tangent vector $\ttau$ and the unit normal vector $\n$
\begin{equation}
    \ttau(\xi) = \norm{\g'(\xi)}^{-1} \g'(\xi),\qquad \n(\xi)=\begin{pmatrix}
        0&1\\ -1 & 0 
    \end{pmatrix}\ttau(\xi),\qquad \xi\in I. 
    \label{eqn:def_tau_n}
\end{equation}
  Let $P:\mathbb{R}\times I\to \mathbb{R}^2$ be the transformation \begin{equation}
  {(x,y) =}  P(\eta,\xi) =\g(\xi)+\eta \n(\xi),
    \label{eqn:def_P}
\end{equation}
which has two key properties that make it very useful in the construction and analysis of the proposed Frenet IFE method. 
One can check that  $P$ maps the line segment $\{0\}\times I$ into the interface $\Gamma$ and, according to the tubular neighborhood theorem \cite{abateCurvesSurfaces2012,docarmoDifferentialGeometryCurves2016,federerCurvatureMeasures1959,pressleyElementaryDifferentialGeometry2010}, also referred to as the $r$-tubular neighborhood in the literature \cite{guzmánFiniteElementMethod2017}, there exists $\epsilon>0$ such that $P|_{(-\epsilon,\epsilon)\times [\xi_s,\xi_e)}$ is injective and $N(\epsilon)=P((-\epsilon,\epsilon)\times I)$ is a tubular neighborhood of $\Gamma$ as shown in \cite{adjeridHighOrderGeometry2024}. Thus, $P$ is invertible from  
$(-\epsilon,\epsilon)\times [\xi_s,\xi_e)$ to 
$N(\epsilon)$, and we use $R = P^{-1}$ to denote its inverse. 

In the remainder of this manuscript we assume that the mesh size $h<\frac{\epsilon}{2}$, which guarantees that each interface element $K$ is contained in $N(\epsilon)$. Moreover, $K$ is contained in the fictitious element $K_F=P([-h,h]\times [\xi_0,\xi_1])$ for some $\xi_0,\xi_1$ {as shown in \autoref{fig:Frenet_transform_K_hK}}. For a given interface element $K$, we let $\hK = R(K)$ and $\hK_F = R(K_F)$, and we call them the Frenet interface element
and the Frenet fictitious element, respectively. Here, $\hK$ is a quadrilateral with curved sides contained in the rectangle $\hK_F$. We also use $\hGamma_{K_F}$ to denote the intersection of the axis $\{0\}\times \mathbb{R}$ with $\hK_F$. 

{It is worth noting here, that $\hK$ is connected in general, with the possible exception where $K$ intersects both $\g([\xi_s,\xi_s+\delta))$ and $\g([\xi_e-\delta,\xi_e))$ for some $\delta>0$. For example consider, the unit circle parametrized by $\g(\xi)=(\cos(\xi),\sin(\xi))$ on the interval $[0,2\pi)$ and consider $K$ be a small rectangle centered at $(1,0)$. However, this is not an issue since $\g$ can always be extended periodically to ensure that $\hK$ is connected. In principle, our construction and analysis does not require the use of the same parametrization on each interface element. Instead, we only need a local parametrization $\g_K$ that parametrizes the curve $\Gamma$ in $K_F$. }

Following the notation introduced in \cite{adjeridHighOrderGeometry2024}, we use $\nabla$ and $D$ to denote the gradient and the Jacobian with respect to the variables $(x,y)$, and we use $\hnabla$ and $\hD$ to denote the gradient and Jacobian with respect to the variables $(\eta,\xi)$.

\section{A summary of our results}\label{sec:space_scheme}
In this section, we describe briefly the Frenet IFE space and 
the symmetric interior penalty DG method developed in \cite{adjeridHighOrderGeometry2024}, and state the main results to be presented for the error analysis of this DG IFE method. First, let $K$ be an interface element and let $\hK^{\pm}=R(K^{\pm})$, then we define $\hV^m_{\beta}(\hK)$ to be the space of piecewise polynomials $\hphi$ such that $ \hphi|_{K^{\pm}}\in Q^m(\hK^{\pm})$ and 
\begin{equation}
         \bb{\hphi}_{\hGamma_{K_F}}=
        \bb{\hbeta\hphi_{\eta}}_{\hGamma_{K_F}}=0, ~\left(\bb{\hbeta\p_{\eta^j}\L(\hphi)}_{\hGamma_{K_F}},v\right)_{\hGamma_{K_F}} =0,\quad  \forall v\in \P^m(\hGamma_{K_F}), ~ j=0,1,\dots,m-2,
        \label{eqn:hV_conditions}
\end{equation}
where $\hbeta=\beta\circ P$, and $\L$ is a linear differential operator satisfying $\L(u\circ P)=(\Delta u)\circ P$ which was introduced in \cite{adjeridHighOrderGeometry2024}. Here, $Q^m(\hK^{\pm})$ is the usual space of tensor-product polynomials of degree not exceeding $m$ on $\hK^{\pm}$ and $\P^m(\hGamma_{K_F})$ is the space of univariable polynomials of degree not exceeding $m$ on the line segment $\hGamma_{K_F}$. Next, we define the Frenet IFE space

\begin{equation}
    \V^m_{\beta}(K) =\left\{\hphi\circ(R|_{K})\mid \hphi\in \hV^m_{\beta}(\hK)\right\}, \label{eqn:def_Vm_K}
\end{equation}
where  $\phi\in \V^m_{\beta}(K)$ iff there exists a function $\hphi\in  \hV^m_{\beta}(\hK)$ such that $\phi(\x)=\hphi(R(\x))$ for all $\x\in K$. By construction, as proved in \cite{adjeridHighOrderGeometry2024}, this space is locally conforming in the sense that $\V^m_{\beta}(K)\subset \H^{m+1}(K,\Gamma;\beta)$. Thus, every  function $\phi\in \V^m_{\beta}(K)$ satisfies the jump conditions $\bb{\phi}_{\Gamma\cap K} =\bb{\beta\p_{\n}\phi}_{\Gamma\cap K}=0$ exactly, {which is a property not shared by any other IFE method in the literature.}

To recap the Frenet IFE method developed in \cite{adjeridHighOrderGeometry2024}, we use  $\V^m_{\beta}(\T_h)$ to denote the global discontinuous Frenet IFE space 
\begin{equation}
    \V^m_{\beta}(\T_h) = \left\{\phi\mid \phi|_{K}\in Q^m(K)\text{ if } K\in \T^n_h,\ \phi|_{K}\in \V^m_{\beta}(K)\ \text{if }K\in \T^i_h\right\} .
    \label{eqn:def_global_IFE_space}
\end{equation}
Then this Frenet IFE method consists of  finding $u_h\in \V^m_{\beta}(\T_h)$ such that 
\begin{equation}
    a_h(u_h,v_h)=L_h(v_h),\qquad \forall v_h \in \V^m_{\beta}(\T_h).
    \label{eqn:compact_discrete_weak_form}
\end{equation}
Similar to the symmetric interior penalty DG method \cite{arnoldInteriorPenaltyFinite1982,wheelerEllipticCollocationfiniteElement1978} and the symmetric partially penalized IFE method \cite{guoHigherDegreeImmersed2019,linPartiallyPenalizedImmersed2015a}, the equation \eqref{eqn:compact_discrete_weak_form} leads to a symmetric linear system $\mathbf{S}\c=\mathbf{f}${, where the vector $\c$ contains  the finite element coefficients of $u_h$.} 
Hence, the error analysis for the Frenet IFE method follows the same line of reasoning as the error analysis for the SIPDG method, where the trace inequality and the approximation capability of the 
underlying finite element space are essential for the error estimation. Since we have already proved  that the Frenet IFE space has an optimal approximation capability in \cite{adjeridHighOrderGeometry2024}, our effort in the present article is to derive a {trace} inequality for the Frenet IFE functions, show that $a_h$ is coercive for a large enough penalty term, and then prove the optimal convergence of this IFE method. 

We prove a trace inequality on an interface element $K$ in two steps: In \autoref{lem:inverse_trace_hK}, we show that $\norm{p}_{L^2(\p\hK)}$ is bounded by $h^{-1/2}\norm{p}_{L^2(\hK)}$ for every $p\in Q^m(\hK)$, which is similar to the classical trace inequality even though $\hK$ is a quadrilateral with curved sides. In \autoref{thm:trace_inverse_K}, we extend this result to Frenet IFE functions in $\V^{m}_{\beta}(K)$, and obtain the following trace inequality : 
$$ \norm{\beta \nabla \phi }_{L^2(\p K)} \le C_t 
        \frac{\beta^+}{\sqrt{\beta^-}} h^{-1/2}\norm{\sqrt{\beta} \nabla \phi}_{L^2(K)},\qquad \forall
        \phi\in \V^{m}_{\beta}(K),$$
where $C_t$ is independent of the mesh and $\beta$. This trace inequality will be employed later in \autoref{sec:analysis} to show that $a_h$ is coercive, which is combined with the approximation capability result established in \cite{adjeridHighOrderGeometry2024} to yield the following error estimates stated in \autoref{thm:main_error_theorem}
$$\norm{u-u_h}_{L^2(\O)}\le C \left(\frac{\beta^+}{\beta^-}\right)^2h^{m+1} \norm{u}_{\H^{m+1}(\O)}, $$ 
and 
$$\vertiii{u-u_h}_h\le C \frac{\beta^+}{\sqrt{\beta^-}}h^m \norm{u}_{\H^{m+1}(\O)},$$
where $\vertiii{\cdot}_h$ is the energy norm defined below in \eqref{eqn:def_energy_norm}.

\section{The trace  inequality on interface elements}\label{sec:trace_inv}

In this section, we derive some useful inequalities on the Frenet IFE space $\V^m_{\beta}(K)$ that will be used later in the analysis of the scheme. A very critical one is a trace inequality for the Frenet IFE functions. Similar to the analysis of the SIPDG method \cite{riviereDiscontinuousGalerkinMethods2008} and partially penalized IFE methods 
\cite{adjeridEnrichedImmersedFinite2023, guoHigherDegreeImmersed2019}, the trace inequality plays a crucial role in proving the coercivity of the bilinear form in the Frenet IFE method \eqref{eqn:compact_discrete_weak_form}
and consequently shows up in the error estimates. However, the trace inequality for traditional finite element functions in terms of polynomials and the related analysis are not directly applicable in our analysis because the Frenet IFE functions are not piecewise polynomials. Our approach is to first establish the necessary inequalities for functions in $\hV^m_{\beta}(\hK)$, which are piecewise polynomials. Then, we prove the trace inequality for Frenet functions in $\V^m_{\beta}(K)$ by analyzing how the inequalities established in $\hV^m_{\beta}(\hK)$ are transformed through the 
nonlinear Frenet mapping $P$ defined by \eqref{eqn:def_P}. One essential hurdle in this procedure is that $\hK$ is a quadrilateral with curved edges, not a polygon. We overcome this difficulty by 
approximating $\hK$ with a {parallelogram}.    

In the remainder of this paper, for conciseness, we will use $a\lesssim b$ to denote $a \le C b$ where $C$ is constant independent of the mesh size $h$ or the relative position of the interface to the mesh, and we use $a\simeq b$ to denote $a\lesssim b$ and $b\lesssim a$. Following  \cite{adjeridHighOrderGeometry2024}, we assume that the parametrization $\g$ is regular and $C^{m+1}$ smooth. Therefore,
\begin{equation}\norm{\g'}\simeq 1,\qquad \norm{\g^{(k)}}\lesssim 1,\quad k=1,2,\dots,m+1.
\label{eqn:norm_g_assumption}
\end{equation}
Furthermore, we will assume that  $h \kappa_{\Gamma}\le\frac{1}{2}$, where  $\kappa_{\Gamma} =\max \norm{\g'}^{-3} \big| \det(\g',\g'')\big|$ is the maximum curvature {of $\Gamma$}. This last condition ensures that the Jacobian of $R$ is well-defined and bounded {since $\det(\hD P)=\norm{\g'(\xi)}(1+\eta \kappa(\xi))\gtrsim 1$.} Furthermore, we have 
\begin{equation}
    \norm{\hD P(\eta,\xi) \mathbf{v}}\simeq \norm{\mathbf{v}},\ \forall \mathbf{v}\in \mathbb{R}^2,\ \forall (\eta,\xi)\in \hK.
    \label{eqn:DP_v_equiv_v}
\end{equation}
For every interface element $K \in \T_h^i$, recall that $\hK=R(K)$, then $\hK$ is contained in the rectangle $\hK_F = [-h,h]\times [\xi_0,\xi_1]$ as illustrated in \autoref{fig:Frenet_transform_K_hK}. From the figure, we also observe that the interface ${{\Gamma}_{K_F}}=\g([\xi_0,\xi_1])$ is transformed into a vertical line segment {$\{0\}\times [\xi_0,\xi_1]$}, which simplifies the construction of high order IFE shape functions considerably compared to existing IFE methods. However, the straight edges of $K$ are transformed into the curved edges of $\hK$ on which the classical inverse and trace inequalities do not apply directly. {To overcome this hurdle, we use a linear approximation of the local interface $\Gamma_{K_F}$ to form an affine mapping which maps the interface element $K$ to a parallelogram that is a good approximation to $\hK$.} Then we will use this approximation to derive the desired inverse and trace inequalities in \autoref{subsec:Preliminary_inequalities} and \autoref{subsec:Inverse_trace}. 

\commentout{
    For this reason, we will resort to approximating the local interface $\Gamma_{K_F}$ with a line segment in \autoref{subsec:geometric_estimates}, then we will use this approximation to derive the desired inverse and trace inequalities in \autoref{subsec:Preliminary_inequalities} and \autoref{subsec:Inverse_trace}. 
}

\begin{figure}[htbp]
    \center
    \includegraphics{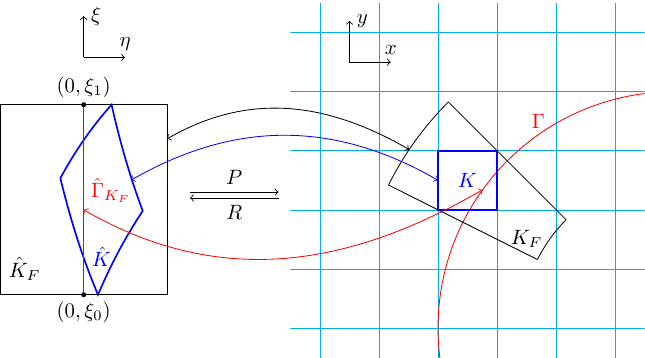}
    \caption{{An illustration of an interface element $K$ and the fictitious element $K_F$ (right), and the corresponding Frenet interface element $\hK$ and the Frenet fictitious element/rectangle $\hK_F$.}}
    \label{fig:Frenet_transform_K_hK}
\end{figure}

\subsection{Geometric estimates}\label{subsec:geometric_estimates} 

On the Frenet fictitious element $K_F=P([-h,h]\times [\xi_0,\xi_1])$, let $\cg:\mathbb{R}\to \mathbb{R}^2$ be the linear interpolant of $\g$ at $\xi_0$ and $\xi_1$ given by 

\begin{equation}
    \cg(\xi)= \frac{1}{\xi_1-\xi_0}\left((\xi-\xi_0)\g(\xi_1) + (\xi_1-\xi)\g(\xi_0)\right),\qquad \xi\in \mathbb{R}.\label{eqn:def_cg}
\end{equation}
We let  $\cGamma$  denote the line $\cg(\mathbb{R})$ with $\ctau$ and $\cn$, respectively,  denoting the unit tangential and  unit normal vectors to $\cGamma=\g(\mathbb{R})$ defined as
$$
\ctau = \frac{1}{\norm{\cg'}}\cg',\qquad \cn =\begin{pmatrix}
    0&1\\ -1 & 0 
\end{pmatrix}\ctau,
$$
which are constant vectors since $\cg$ is linear. Next, let $\cP:\mathbb{R}^2\to\mathbb{R}^2$ be the affine map $\cP(\eta,\xi)= \cg(\xi)+\eta \cn$, then it follows immediately that $\cP$ is invertible. We use $\cR$ to denote the inverse of $\cP$ and $\cK= \cR(K)$ to denote the pre-image of $K$ under $\cP$. Following \cite{adjeridHighOrderGeometry2024}, we have $\cR(K)\subset [-h,h]\times[\cxi_0,\cxi_1]$ where $\cxi_0,\cxi_1$ can be obtained from the second components of $\cR(A_i)$ for $1\le i\le 4$. 

 The advantage of the map $\cP$ is that  $\cK$ is a {parallelogram} with straight edges (unlike $\hK=R(K)$ which has curved edges). However, before attempting the standard scaling argument on the
 quadrilateral $\cK$, we need a few  estimates about the shape and size of $\cK$. First, we recall the following lemma from \cite{adjeridHighOrderGeometry2024}
\begin{lemma}\label{lem:xi1_minus_xi0}
    Let $K$ be an interface element and its associated $[\xi_0,\xi_1]$ defined above, then 
    \begin{equation}
        \xi_1-\xi_0 \simeq h,
        \label{eqn:xi1_minus_xi0}
    \end{equation}
    where $h=\diam(K)$. 
\end{lemma}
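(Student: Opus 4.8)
The plan is to reduce both inequalities to a single explicit computation of the gradient of the $\xi$-coordinate function and then exploit that $\xi$ measures, up to a factor $\simeq 1$, displacement along the tangent direction of $\Gamma$. For $\x\in K$ write $R(\x)=(\eta(\x),\xi(\x))$; since the fictitious element must contain $K$, we may take $\xi_0=\min_{\x\in K}\xi(\x)$ and $\xi_1=\max_{\x\in K}\xi(\x)$. The first step is to compute $\nabla\xi$. Differentiating $P(\eta,\xi)=\g(\xi)+\eta\n(\xi)$ and using the Frenet formulas gives $\hD P=[\,\n(\xi)\mid(1+\eta\kappa(\xi))\g'(\xi)\,]$, with $\det(\hD P)=(1+\eta\kappa)\norm{\g'}$ as recorded in the excerpt. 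Inverting this $2\times2$ matrix shows that the second row of $\hD R=(\hD P)^{-1}$, namely $\nabla\xi$, is parallel to the unit tangent: $\nabla\xi(\x)=\frac{1}{(1+\eta\kappa)\norm{\g'}}\,\ttau(\xi)$. Because $|\eta|\le h$ and $|\kappa|\le\kappa_\Gamma$ force $|\eta\kappa|\le h\kappa_\Gamma\le\frac12$, while $\norm{\g'}\simeq1$ by \eqref{eqn:norm_g_assumption}, the scalar prefactor is positive and $\simeq1$; hence $\norm{\nabla\xi}\simeq1$ throughout $K$.

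For the upper bound, let $\x_*,\x^*\in K$ realize the minimum and maximum of $\xi$. Since $K$ is convex, the segment $[\x_*,\x^*]$ lies in $K$, so the mean value inequality together with $\norm{\nabla\xi}\simeq1$ yields $\xi_1-\xi_0=\xi(\x^*)-\xi(\x_*)\le\sup_K\norm{\nabla\xi}\,\norm{\x^*-\x_*}\lesssim\diam(K)=h$. Equivalently, this is just the Lipschitz bound $\norm{R(\x^*)-R(\x_*)}\simeq\norm{\x^*-\x_*}$ coming from \eqref{eqn:DP_v_equiv_v}.

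The lower bound is the crux. The idea is that the extent of the whole square $K$ in the tangent direction is $\gtrsim h$, independently of where $\Gamma$ cuts $K$. Fix $\t_0=\ttau(\xi_m)$ with $\xi_m=(\xi_0+\xi_1)/2$, and choose a chord of $K$ parallel to $\t_0$; since $K$ is a square of diameter $h$ (side $h/\sqrt2$), for any direction it contains a chord of length at least its side, so $\ell\ge h/\sqrt2\gtrsim h$. Writing its endpoints as $\x_1$ and $\x_2=\x_1+\ell\t_0$ and integrating,
\[
\xi_1-\xi_0\ \ge\ |\xi(\x_2)-\xi(\x_1)|\ =\ \ell\int_0^1\nabla\xi(\x_1+t\ell\t_0)\cdot\t_0\,dt ,
\]
so it suffices to bound the integrand below by a positive constant. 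By the formula for $\nabla\xi$, the integrand equals $\frac{1}{(1+\eta\kappa)\norm{\g'}}\,\ttau(\xi(\x))\cdot\ttau(\xi_m)$; the prefactor is $\ge c_1>0$, and $\ttau(\xi(\x))\cdot\ttau(\xi_m)\ge c_0>0$ because the unit tangent turns by a total angle at most $\kappa_\Gamma$ times the arc length in $K_F$, which is $\lesssim h\kappa_\Gamma\le\frac12$ across $K$. Thus $\xi_1-\xi_0\ge c_0c_1\ell\gtrsim h$.

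The main obstacle is exactly this sign-definiteness of the directional derivative $\nabla\xi\cdot\t_0$ along the chord, which amounts to showing the tangent cannot turn by $\pi/2$ over a single element. This is where the curvature-to-mesh assumption $h\kappa_\Gamma\le\frac12$ is essential: combined with the already-established upper bound $\xi_1-\xi_0\lesssim h$ and the explicit constants in \eqref{eqn:DP_v_equiv_v} and \eqref{eqn:norm_g_assumption}, it keeps the total turning strictly below $\pi/2$, so every tangent stays within a fixed acute angle of $\t_0$ and the integrand remains bounded away from zero.
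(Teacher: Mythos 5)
The paper does not actually prove this lemma: it is quoted verbatim from the predecessor paper \cite{adjeridHighOrderGeometry2024} (``we recall the following lemma from...''), so there is no in-paper argument to compare against. Your proof is a self-contained and essentially correct substitute. The key computation $\nabla\xi=\frac{1}{(1+\eta\kappa)\norm{\g'}}\ttau(\xi)$, obtained by inverting $\hD P=\left(\n(\xi),\,(1+\eta\kappa(\xi))\g'(\xi)\right)$, matches the Jacobian formula the paper itself records in the proof of \autoref{lem:Tx_minus_x}, and it cleanly gives both directions: the upper bound by the mean value inequality on the convex element $K$, and the lower bound by integrating $\nabla\xi\cdot\ttau(\xi_m)$ along a chord of $K$ of length $\gtrsim h$ parallel to the mid-tangent. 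This is a nice reduction: both inequalities come from the single fact $\norm{\nabla\xi}\simeq 1$ plus sign-definiteness of the tangential directional derivative, and it makes explicit where the standing hypotheses $h\kappa_\Gamma\le\frac12$ and $\norm{\g'}\simeq 1$ enter.

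One caveat on the crux step. Your claim that the tangent turns by at most ``$\lesssim h\kappa_\Gamma\le\frac12$'' conflates a bound with a hidden constant and an absolute threshold: what the argument actually needs is total turning strictly below $\pi/2$, and the bound you can prove is
\begin{equation}
\left|\theta(\xi(\x))-\theta(\xi_m)\right|\;\le\;\kappa_\Gamma\,\max\norm{\g'}\,\frac{\xi_1-\xi_0}{2}\;\le\;\kappa_\Gamma\, h\,\frac{\max\norm{\g'}}{\min\norm{\g'}}\;\le\;\frac{1}{2}\,\frac{\max\norm{\g'}}{\min\norm{\g'}},
\notag
\end{equation}
using your own upper bound $\xi_1-\xi_0\le \frac{2h}{\min\norm{\g'}}$. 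If the ratio $\max\norm{\g'}/\min\norm{\g'}$ exceeds $\pi$, the hypothesis $h\kappa_\Gamma\le\frac12$ alone does not force the turning below $\pi/2$; you need either that ratio to be close to $1$ or an additional ``$h$ sufficiently small'' assumption (under which the turning tends to $0$, since $\kappa_\Gamma$ and $\norm{\g'}$ are fixed properties of $\Gamma$). This is harmless in context --- the paper imposes mesh-fineness conditions $h<h_0,h_1,h_2,h_4$ throughout, and the hidden constants in $\simeq$ are allowed to depend on $\Gamma$ --- but the last paragraph of your write-up should say ``for $h$ small enough depending on $\Gamma$'' rather than attributing the sign-definiteness to $h\kappa_\Gamma\le\frac12$ alone.
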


Following the reasoning used in \cite{adjeridHighOrderGeometry2024}, we can show that $\g(\xi_1)-\g(\xi_0)\simeq h$. Consequently, we have 
\begin{equation}
    \norm{\cg'}\simeq 1.
    \label{eqn:norm_cg_prime}
\end{equation}
Therefore,  $\cg$ satisfies the conditions \eqref{eqn:norm_g_assumption}, and we can apply \autoref{lem:xi1_minus_xi0} to $\cg$ to obtain

\begin{equation}
    \cxi_1-\cxi_0 \simeq h.\label{eqn:cxi1_minus_cxi0}
\end{equation}
Moreover, the equivalence \eqref{eqn:norm_cg_prime} implies that  the Jacobians of $\cP$ and $\cR$ satisfy
\begin{equation}
    \left|\det(\hat{D}\cP )\right|\simeq
    \left|\det(D\cR) \right| \simeq 1,
    \qquad 
    \norm{\hat{D}\cP \mathbf{u}}\simeq \norm{D\cR\mathbf{u}} \simeq \norm{\mathbf{u}},\ \forall\mathbf{u}\in \mathbb{R}^2,
\label{eqn:jacobian_cP_cR_good}
\end{equation}
where $\norm{\cdot}$ is the Euclidean norm on $\mathbb{R}^2$.

Finally, we will use $T:\hK\to \cK$ to denote the map $T(\hx)=\cR(P(\hx))$ {for $\hx=(\eta,\xi)\in\hK$}, and we refer to it as the transition map. It follows from  assumptions \eqref{eqn:norm_g_assumption} and \eqref{eqn:jacobian_cP_cR_good} that $T$ is continuously differentiable and 
\begin{equation}\left|\det(\hD T)\right|\simeq 1.\label{eqn:jacobian_T_good}\end{equation}

The map $T$ will be crucial later in this section, since it relates {quantities} on the {parallelogram} $\cK$ to {quantities} on the curved Frenet element $\hK$. In particular,  \eqref{eqn:jacobian_T_good} ensures that the norm of a function $f$ on $\cK$ is comparable to the norm of $f\circ T$ on $\hK$.


\subsection{Preliminary inequalities}\label{subsec:Preliminary_inequalities} 

Using \eqref{eqn:jacobian_cP_cR_good} and the classical scaling argument \cite{ciarletFiniteElementMethod2002}, we can derive the following inverse and trace inequalities on $\cK$

\begin{equation}
    \norm{\nabla p}_{L^2(\cK)} \lesssim h^{-1} \norm{p}_{L^2(\cK)},\qquad \norm{ p}_{L^2(\p\cK)} \lesssim h^{-1/2} \norm{p}_{L^2(\cK)},\qquad \forall p \in Q^m(\cK).
    \label{eqn:trace_inverse_cK}
\end{equation}






{Our goal in the remainder of this subsection is to extend inequalities \eqref{eqn:trace_inverse_cK} to  polynomials on $\hK$} which is achieved by investigating the transition map $T$ further. First, we combine the classical linear interpolation error bound with the assumption $\norm{\g''}\lesssim 1$ and \eqref{eqn:xi1_minus_xi0} to obtain
\begin{equation}
    \norm{\cg-\g}_{C^0([\xi_0,\xi_1])}\lesssim h^2,
    \qquad \norm{\cg'-\g'}_{C^0([\xi_0,\xi_1])}\lesssim h.\label{eqn:g_minus_cg}
\end{equation}
Next, we use \eqref{eqn:g_minus_cg} to derive the following inequalities 

\begin{equation}
   \norm{\ctau-\ttau}_{C^0([\xi_0,\xi_1])}\lesssim h,\qquad \norm{\cn-\n}_{C^0([\xi_0,\xi_1])}\lesssim h.
    \label{eqn:ctau_minus_tau}
\end{equation}

{From the inequalities \eqref{eqn:g_minus_cg} and \eqref{eqn:ctau_minus_tau}, we observe that the affine map $\cP$ is an $O(h^2)$ approximation of the nonlinear map $P$. Furthermore, the transition map $T$ is close to the identity map as the following lemma shows.

\begin{lemma}\label{lem:Tx_minus_x}
    For $T=\cR\circ P:\hK\to \cK$, we have
    \begin{equation}
        \norm{T-\id}_{C^0(\hK)}\lesssim h^2,
    \label{eqn:norm_T_minus_id}
    \end{equation}
    and \begin{equation}
        \norm{\hD T(\hx) -I_2}\lesssim h,\qquad \left|\det(\hD T(\hx)) -1\right|\lesssim h,\qquad \forall \hx = (\eta, \xi) \in \hK,
    \label{eqn:DT_minus_I}
    \end{equation}
    where $\id$ is the identity map and $I_2$ is the $2\times 2$ identity matrix.
\end{lemma}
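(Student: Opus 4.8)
The plan is to exploit the single structural fact that sets $\cP$ apart from $P$: it is affine, and hence its inverse $\cR$ is affine as well, with a constant Jacobian $D\cR$ satisfying $\norm{D\cR \mathbf{u}} \simeq \norm{\mathbf{u}}$ by \eqref{eqn:jacobian_cP_cR_good}. This lets me linearize every difference appearing in $T = \cR \circ P$ exactly, with no mean-value remainder, and reduce the two claimed bounds to the first-order geometric estimates \eqref{eqn:g_minus_cg}–\eqref{eqn:ctau_minus_tau} that are already in hand, together with the fact that $\abs{\eta} \le h$ on $\hK \subset \hK_F = [-h,h]\times[\xi_0,\xi_1]$.

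For \eqref{eqn:norm_T_minus_id}, I would start from $\cR(\cP(\hx)) = \hx$ and use the affineness of $\cR$ to write
\begin{equation*}
T(\hx) - \hx = \cR(P(\hx)) - \cR(\cP(\hx)) = D\cR\,\bigl(P(\hx) - \cP(\hx)\bigr).
\end{equation*}
Expanding $P(\hx) - \cP(\hx) = \bigl(\g(\xi) - \cg(\xi)\bigr) + \eta\bigl(\n(\xi) - \cn\bigr)$ from \eqref{eqn:def_P} and the definition of $\cP$, the first term is $O(h^2)$ by \eqref{eqn:g_minus_cg}, while the second is bounded by $\abs{\eta}\,\norm{\n(\xi)-\cn} \le h \cdot O(h) = O(h^2)$ via \eqref{eqn:ctau_minus_tau}. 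Since $\norm{D\cR\,\mathbf{u}} \simeq \norm{\mathbf{u}}$, this gives $\norm{T - \id}_{C^0(\hK)} \lesssim h^2$.

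For the Jacobian bound \eqref{eqn:DT_minus_I}, the key observation is that differentiating $\cR \circ \cP = \id$ yields $D\cR\,\hD\cP = I_2$, where $\hD\cP = [\,\cn \mid \cg'\,]$ is a constant matrix. The chain rule then gives
\begin{equation*}
\hD T(\hx) - I_2 = D\cR\,\hD P(\hx) - D\cR\,\hD\cP = D\cR\,\bigl(\hD P(\hx) - \hD\cP\bigr).
\end{equation*}
I would read off $\hD P(\hx) = [\,\n(\xi) \mid \g'(\xi) + \eta\,\n'(\xi)\,]$ from \eqref{eqn:def_P} and compare columns with $\hD\cP$: the first column difference $\n(\xi)-\cn$ is $O(h)$ by \eqref{eqn:ctau_minus_tau}, and the second, $\g'(\xi) - \cg' + \eta\,\n'(\xi)$, is $O(h)$ using \eqref{eqn:g_minus_cg}, $\abs{\eta}\le h$, and $\norm{\n'}\lesssim 1$ (which follows from $\norm{\g'}\simeq 1$ and $\norm{\g''}\lesssim 1$ in \eqref{eqn:norm_g_assumption}). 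With $\norm{D\cR}\simeq 1$ this yields $\norm{\hD T(\hx) - I_2}\lesssim h$. For the determinant, writing $\hD T(\hx) = I_2 + E$ with $\norm{E}\lesssim h$ and using the exact identity $\det(I_2 + E) = 1 + \operatorname{tr}(E) + \det(E)$ for $2\times 2$ matrices, I bound $\abs{\operatorname{tr}(E)}\lesssim h$ and $\abs{\det(E)}\lesssim h^2$, so $\abs{\det(\hD T(\hx)) - 1}\lesssim h$.

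Every step here is routine; there is no genuine obstacle, only two points of bookkeeping to get right. The first is that one must use the affine nature of $\cR$ to replace the differences by exact applications of the constant matrix $D\cR$, rather than invoking a mean value estimate. The second is controlling the one term peculiar to the nonlinear map, the normal-derivative contribution $\eta\,\n'(\xi)$ in the $\xi$-column of $\hD P$: it is only the product of the small factor $\abs{\eta}\le h$ with the bounded curvature term $\norm{\n'}\lesssim 1$ that keeps this term $O(h)$, and it is the analogous product in the $C^0$ estimate that produces the sharper $O(h^2)$ there.
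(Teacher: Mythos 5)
Your proposal is correct and follows essentially the same route as the paper: both use the affineness of $\cR$ to reduce everything to bounding $P-\cP$ and $\hD P - \hD\cP$ via \eqref{eqn:g_minus_cg}--\eqref{eqn:ctau_minus_tau} and $|\eta|\le h$. The only cosmetic differences are that the paper writes the second column of $\hD P$ as $(1+\eta\kappa(\xi))\g'(\xi)$ using the Frenet identity rather than $\g'(\xi)+\eta\,\n'(\xi)$, and it dispatches the determinant bound by the Lipschitz estimate $|\det(\hD T)-1|\lesssim \norm{\hD T-I_2}$ rather than your explicit $2\times 2$ expansion.
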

\begin{proof}   
In the first part of the proof, our goal is to show that $\norm{T(\hx)-\hx}\lesssim h^2$, where $\hx=(\eta,\xi)\in \hK$. First, we observe that  for every $(\eta,\xi) \in \hK$, $|\eta|\le h$ since $\hK\subset \hK_{F}$. Now, we have  
    \begin{align}\norm{P(\hx)-\cP(\hx)}
    &\le \norm{\g(\xi)-\cg(\xi)} +\left|\eta\right| \norm{\cn -\n(\xi)}\notag\\ 
    &\le \norm{\g(\xi)-\cg(\xi)} +h \norm{\cn -\n(\xi)}\notag \\ 
    &\lesssim h^2, \label{eqn:Phx_minus_Phx}
    \end{align}
 where the last inequality follows from \eqref{eqn:g_minus_cg} and \eqref{eqn:ctau_minus_tau}. Since $\cR$ is an affine map, using \eqref{eqn:Phx_minus_Phx} and \eqref{eqn:jacobian_cP_cR_good}, we have 
 \begin{align*}\norm{T(\hx)-\hx}=\norm{\cR\left(P(\hx)\right)-\cR\left(\cP(\hx)\right)}&=\norm{D \cR\left(P(\hx)-\cP(\hx)\right)}\\ 
&\le \norm{D \cR}\norm{\left(P(\hx)-\cP(\hx)\right)}\\ 
&\lesssim h^2. 
 \end{align*}
Then \eqref{eqn:norm_T_minus_id} follows from taking the supremum over all $\hx\in \hK$ in the estimate above. Next, we move to proving \eqref{eqn:DT_minus_I}.  A direct calculation shows that 
   $$\hD\cP = \begin{pmatrix} \cn,\  \cg'\end{pmatrix} ,\qquad \hD P(\eta,\xi)=\left(\n(\xi), (1+\eta \kappa(\xi))\g'(\xi)\right)$$ 
   which, combined with \eqref{eqn:jacobian_cP_cR_good}, \eqref{eqn:ctau_minus_tau} and $|\eta|\le h$, lead to the first estimate in \eqref{eqn:DT_minus_I} as follows
\begin{align*}
\norm{  \hD T(\eta,\xi)-I_2} 
&=\norm{  (D \cR) \hD P(\eta,\xi)-(D \cR)(\hD \cP)} 
\lessim \norm{ \hD P(\eta,\xi)-\hD \cP},\notag \\ 
&\lessim \norm{\n(\xi)-\cn} +\norm{ (1+\eta \kappa(\xi))\g'(\xi) - \cg'}\notag\\ 
&\lessim \norm{\n(\xi)-\cn} +\norm{\g'(\xi)-\cg'} + h\norm{\kappa(\xi)\g'(\xi)} 
\notag \\ 
&\lesssim  h. 
\end{align*}
The second estimate in \eqref{eqn:DT_minus_I} follows from the first one since 
\[
\left|\det(\hD T(\hx)) -1\right|\lessim \norm{\hD T(\hx)-I_2}\lessim h. \notag 
\]
\end{proof}



}


The estimate \eqref{eqn:norm_T_minus_id} above shows that $\cK$ approximates $\hK$ well as $h\to 0$. We illustrate this in \autoref{fig:cK_getting_closer_to_hK}, where we consider the case in which  $\Gamma$ is the unit circle and $K(n)=\frac{1}{\sqrt{2}}+[-2^{-n},2^{-n}]^2$ for $n\in\{2,3,4,5\}$ and we let $\cK(n)$ and $\hK(n)$ be $\cR(K(n))$ and $R(K(n))$, respectively. We observe that as $n$ grows, the sets $\cK(n)$ and $\hK$ become closer to each other.

\begin{figure}[ht]
    \begin{subcolumns}[.5\textwidth]
    \begin{subfigure}{.5\textwidth}
        \center
        \includegraphics[width=.5\textwidth]{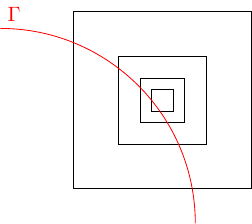}
        \caption{The sequence of elements $K(n)$}
        \label{subfig:K_getting_smaller}
    \end{subfigure}
    \nextsubcolumn   
     \begin{subfigure}{.15\textwidth}
        \center
        \includegraphics[width=.8\textwidth]{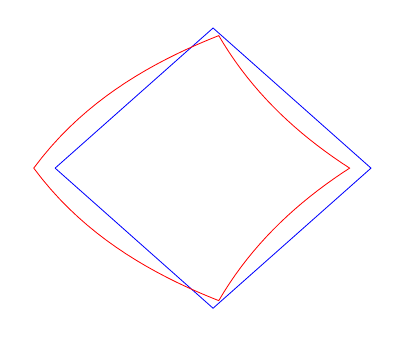}
        \caption{$n=2$}
    \end{subfigure}\begin{subfigure}{.15\textwidth}
        \center
        \includegraphics[width=.8\textwidth]{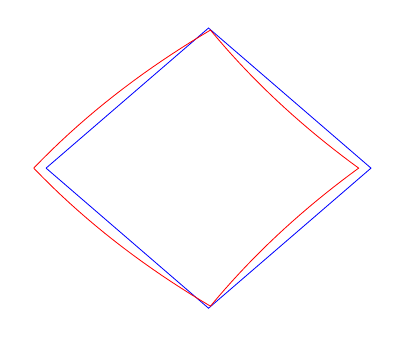}
        \caption{$n=3$}
    \end{subfigure} 
    \begin{subfigure}{.15\textwidth}
        \center
        \includegraphics[width=.8\textwidth]{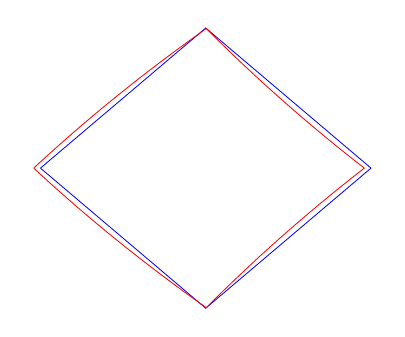}
        \caption{$n=4$}
    \end{subfigure}\begin{subfigure}{.15\textwidth}
        \center
        \includegraphics[width=.8\textwidth]{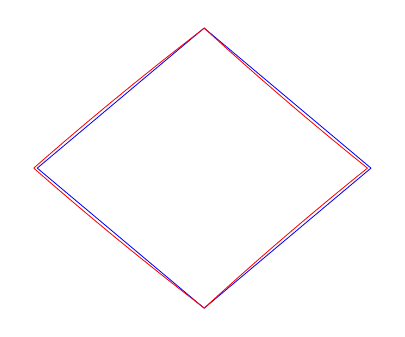}
        \caption{$n=5$}
    \end{subfigure}
\end{subcolumns}
\caption{Element $K(n)$, $n=2,3,4,5$ (left) with different sizes. The sets $\hK(n)$ (in red) and $\cK(n)$ (in blue) for $n=2,3,4,5$ (right). }
\label{fig:cK_getting_closer_to_hK}

\end{figure}

So far, we have shown that $T(\hx)$ is close to $\hx$. Naturally, we would expect $f(T(\hx))$ to be close to $f(\hx)$ for a smooth enough function $f$. However, when estimating $f(T(\hx))-f(\hx)$ using the mean value theorem, we need to 
bound the gradient $\nabla f$ on a line segment connecting $\hx$ and $T(\hx)$. This line segment might extend beyond $\cK\cup\hK$, so we need to consider a larger domain $\ccK$ containing all lines  $\overline{\hx T(\hx)}$ for all $\hx\in \hK$.  There are many ways to choose $\ccK$ but in order to simplify our calculations, we choose $\ccK$ to be the homothetic image of $\cK$ with a scaling factor of $2$, \emph{i.e.}, for the parallelogram $\cK$ with vertices  $\{\check{A}_i\}_{i=1}^{4}$  and center $\check{G}$,  $\ccK$ is the {parallelogram} containing $\cK$ with vertices $\{\check{\check{A}}_i\}_{i=1}^{4}$  such that  $\overrightarrow{\check{G}\check{\check{A}}_i}=2\overrightarrow{\check{G}\check{A}_i}$ for $1\le i \le 4$, see
\autoref{fig:illustration_of_ccK} for an illustration.

 


\begin{figure}[ht]
    \center
    \includegraphics[scale=.1]{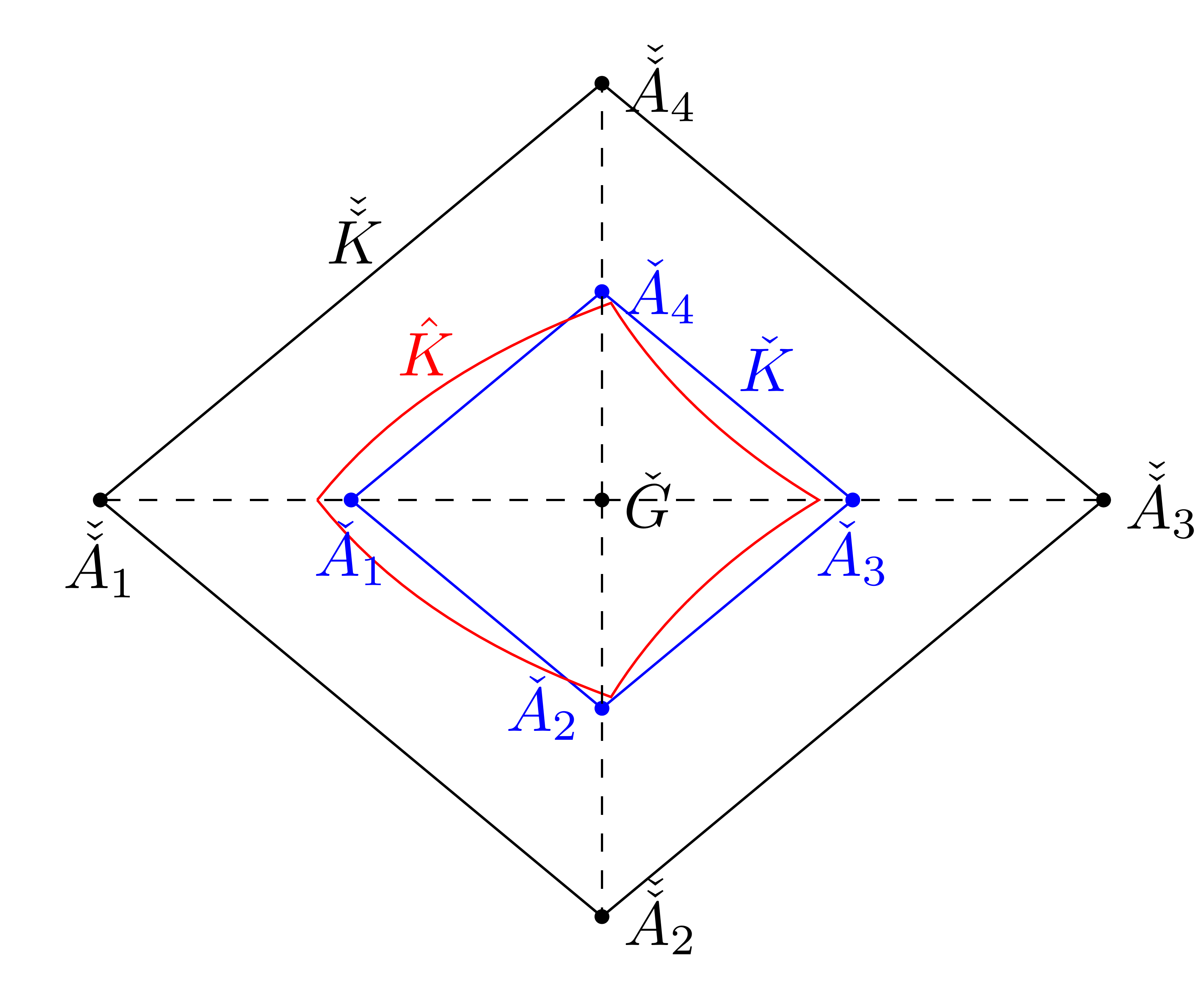}
    \caption{An illustration of a reference Frenet element $\hK$ (red),  a Frenet parallelogram $\cK$ (blue) and a fictitious Frenet parallelogram $\ccK$ (black).}
    \label{fig:illustration_of_ccK}
\end{figure}

\vfill

 Since  $\ccK$ is a homothetic image of $\cK$ with a scaling factor of $2$, the norm equivalence property \cite{xiaoHighorderExtendedFinite2020} states that the norm of a polynomial on $\cK$ is equivalent to its norm on $\ccK$, i.e., 
\begin{equation}
    \norm{p}_{L^2(\cK)}\simeq \norm{p}_{L^2(\ccK)},\qquad \forall p\in Q^m(\mathbb{R}^2).
    \label{eqn:homothetic_norm_invariance}
\end{equation}
Next, we show that $\hK$ is also a subset of $\ccK$ provided that $h$ is small enough which will allow us to derive a norm equivalence similar  to \eqref{eqn:homothetic_norm_invariance} for $\hK$ instead of $\cK$.



\begin{lemma}\label{lem:hK_subset_ccK}
{Let $K$ be an interface element and let $\ccK$ be the homothetic image of $\cK=\cR(K)$ with a scaling factor of $2$. Then, there exists $h_0>0$ such that $\hK\subset\ccK$ whenever $h=\diam(K)<h_0$.  }
\end{lemma}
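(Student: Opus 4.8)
The plan is to show that $\hK$ is squeezed into an $O(h^2)$-collar of the parallelogram $\cK$, whereas the homothetic doubling $\ccK$ swallows an entire $O(h)$-collar of $\cK$; since $h^2 \ll h$ for small $h$, the inclusion $\hK\subset\ccK$ will follow once $h$ is below a threshold $h_0$ depending only on the constants in \eqref{eqn:jacobian_cP_cR_good} and \eqref{eqn:norm_T_minus_id}.

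First I would invoke \autoref{lem:Tx_minus_x}. Since $P(\hK)=K$ and $\cR(K)=\cK$, the transition map $T=\cR\circ P$ sends $\hK$ onto $\cK$; hence for every $\hx\in\hK$ its image $T(\hx)$ lies in $\cK$ and satisfies $\norm{\hx-T(\hx)}\le C_1 h^2$ by \eqref{eqn:norm_T_minus_id}. In particular $\operatorname{dist}(\hx,\cK)\le C_1 h^2$ for all $\hx\in\hK$, so $\hK$ is contained in the $C_1 h^2$-neighborhood of $\cK$.

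The geometric core is to prove that this $C_1 h^2$-neighborhood sits inside $\ccK$. Translating so that the centroid $\check G$ is the origin, the homothety with ratio $2$ gives $\ccK=2\cK$. Because the mesh is uniform and rectangular, $K$ is a shape-regular rectangle of diameter $\simeq h$, so I can write $K=\x_K+A_K([-1,1]^2)$ with $A_K$ diagonal and singular values $\simeq h$. As $\cR$ is affine, $\cK = M([-1,1]^2)$ (after the translation) with $M=D\cR\,A_K$; the norm equivalence in \eqref{eqn:jacobian_cP_cR_good} forces the singular values of $D\cR$ to be $\simeq 1$, hence those of $M$ to be $\simeq h$, giving $\norm{M^{-1}}\lesssim h^{-1}$. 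Passing to the reference coordinate $\z=M^{-1}\x$, we have $\x\in\cK\iff\z\in[-1,1]^2$ and $\x\in\ccK\iff\z\in[-2,2]^2$. If $\operatorname{dist}(\x,\cK)\le C_1 h^2$, choosing $\x'=M\z'\in\cK$ with $\norm{\x-\x'}\le C_1 h^2$ yields $\norm{\z-\z'}\le\norm{M^{-1}}\,\norm{\x-\x'}\lesssim h^{-1}\cdot h^2 = C_2 h$, so $\z\in[-1-C_2 h,\,1+C_2 h]^2$. Combining with the first step, each $\hx\in\hK$ has reference coordinate $\z$ within distance $C_2 h$ of $\z'\in[-1,1]^2$, whence $\z\in[-2,2]^2$ as soon as $C_2 h\le 1$; thus $\hx\in\ccK$ for all $h<h_0:=1/C_2$, which is the claim.

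The main obstacle is precisely the middle step: certifying that $\ccK$ absorbs an $O(h)$-collar of $\cK$, i.e.\ that the affine map generating $\cK$ is uniformly well-conditioned with singular values $\simeq h$, so that Euclidean distances are distorted by at most $O(h^{-1})$ when passed to reference coordinates. This is exactly where the shape-regularity encoded in \eqref{eqn:jacobian_cP_cR_good} together with the mesh regularity is indispensable: if $\cK$ were allowed to degenerate into a thin sliver, the collar width would drop below $O(h^2)$ and the inclusion could fail. Everything else is a routine comparison between the $O(h^2)$ deviation of $\hK$ from $\cK$ and the $O(h)$ margin provided by the homothety.
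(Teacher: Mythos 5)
Your proposal is correct and takes essentially the same route as the paper's proof: both first use \autoref{lem:Tx_minus_x} to place $\hK$ inside a $C_1h^2$-neighborhood of $\cK$, and both then show that $\ccK$ absorbs an $O(h)$-collar of $\cK$ (so the inclusion holds once $h$ is below a threshold fixed by the two constants). The only difference is bookkeeping: the paper obtains the $O(h)$ margin by bounding the distance from the center $\check{G}$ to $\p\cK$ below by $\gtrsim h$ via \eqref{eqn:jacobian_cP_cR_good} and shape regularity, then invoking the homothety, whereas you obtain the same margin by passing to reference coordinates through the well-conditioned affine map $M=D\cR\,A_K$ --- the same ingredients in a different coordinate frame.
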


\begin{proof}
    Let $\overline{B}(\mathbf{0},r)$ be the closed disk centered at the origin with radius $r$ and let $C_1$ be the hidden constant in \eqref{eqn:norm_T_minus_id}, then $\hK\subset \cK+\overline{B}(\mathbf{0},C_1h^2)$. 
     Next, let $\check{G}$ be the center of $\cK$ as shown in \autoref{fig:illustration_of_ccK}, then $G=\cP(\check{G})$ is the center of $K$. Therefore, 
    \begin{equation}
        \min_{\check{\x}\in \p \cK}\norm{\check{G}-\check{\x}}= \min_{\x\in \p K} \norm{\cR(G)-\cR(\x)}
        =  \min_{\x\in \p K} \norm{D\cR(G-\x)}\gtrsim  \min_{\x\in \p K} \norm{G-\x}\gtrsim h,
        \label{eqn:boundary_cK_far_from_G}
    \end{equation}
    where the last two inequalities follow from \eqref{eqn:jacobian_cP_cR_good} and  the shape regularity of $\T_h$, respectively. By the construction of $\ccK$, the distance between an edge $\check{e}\in \p \cK$ and its homothetic image is the same as the distance between $\check{e}$ and $\check{G}$. Therefore, by \eqref{eqn:boundary_cK_far_from_G}
    \begin{equation}
        \norm{\check{\x}-\check{\y}} \gtrsim h,\qquad \forall \check{\x}\in \p\cK,\ \forall \check{\y}\in \p \ccK. \notag
    \end{equation}
    Consequently, there exists $C_2>0$ independent of $h$ and the relative position of the interface, such that $\cK+\overline{B}(\mathbf{0},C_2 h)\subset \ccK$. Hence, if $h<h_0=\frac{C_2}{C_1}$, we have 
    $$
    \hK\subset\cK +\overline{B}(\mathbf{0},C_1 h^2)\subset
    \cK +\overline{B}(\mathbf{0},C_2 h)
    \subset \ccK, 
    $$
    which proves this lemma. 
    

    
      

\end{proof}

\begin{lemma}\label{lem:f_minus_f_circ_T}
    Let $h_0$ be the constant from \autoref{lem:hK_subset_ccK}, then 

   \begin{equation}
        \norm{f-f\circ T}_{C^0(\hK)} \lesssim h^2\norm{\nabla f}_{C^0(\ccK)},\qquad \forall f\in C^1(\ccK),
    \label{eqn:f_minus_f_circ_T}
    \end{equation}
    for all interface elements $K$ with diameter $h<h_0$. 
\end{lemma}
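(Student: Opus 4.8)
The plan is to reduce everything to a one-dimensional fundamental-theorem-of-calculus estimate along the segment joining $\hx$ to $T(\hx)$, controlling the displacement by \autoref{lem:Tx_minus_x} and the gradient by the hypothesis $f\in C^1(\ccK)$. The only real work is to make sure the segment on which I integrate actually stays inside $\ccK$, so that the bound $\norm{\nabla f}_{C^0(\ccK)}$ is legitimately available; this is precisely why $\ccK$ was enlarged to be the factor-$2$ homothetic image of $\cK$ in the discussion preceding \autoref{lem:hK_subset_ccK}.

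First I would fix an arbitrary $\hx\in\hK$ and observe that both endpoints of the segment $\overline{\hx\,T(\hx)}$ lie in $\ccK$. Indeed, $\hx\in\hK\subset\ccK$ by \autoref{lem:hK_subset_ccK} (valid since $h<h_0$), while $T(\hx)=\cR(P(\hx))\in\cK\subset\ccK$ because $T$ maps $\hK$ into $\cK$ and $\cK$ is contained in its own homothetic enlargement $\ccK$. Since $\ccK$ is a parallelogram, hence convex, the entire segment $\{(1-t)\hx+t\,T(\hx):t\in[0,1]\}$ is contained in $\ccK$.

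Next I would apply the fundamental theorem of calculus to $t\mapsto f\bigl((1-t)\hx+t\,T(\hx)\bigr)$, which is legitimate because $f\in C^1(\ccK)$ and the segment lies in $\ccK$. This gives
\begin{equation}
    f(T(\hx))-f(\hx)=\int_0^1 \nabla f\bigl(\hx+t(T(\hx)-\hx)\bigr)\cdot\bigl(T(\hx)-\hx\bigr)\,dt, \notag
\end{equation}
so that by Cauchy--Schwarz and the trivial bound of the integrand over $[0,1]$,
\begin{equation}
    \abs{f(T(\hx))-f(\hx)}\le \norm{\nabla f}_{C^0(\ccK)}\,\norm{T(\hx)-\hx}. \notag
\end{equation}
Invoking the displacement estimate $\norm{T(\hx)-\hx}\lesssim h^2$ from \autoref{lem:Tx_minus_x} and then taking the supremum over all $\hx\in\hK$ yields $\norm{f-f\circ T}_{C^0(\hK)}\lesssim h^2\norm{\nabla f}_{C^0(\ccK)}$, as claimed.

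The single point requiring care, and the only step that is more than routine, is the segment-containment argument: without the enlargement to $\ccK$ the line connecting $\hx$ and $T(\hx)$ could poke outside $\cK\cup\hK$, and then the $C^1$ bound on $f$ would not apply on the whole path of integration. All the geometric preparation (the norm equivalence \eqref{eqn:homothetic_norm_invariance}, \autoref{lem:hK_subset_ccK}, and the convexity of the parallelogram $\ccK$) is exactly what makes this containment hold, so once it is in place the estimate follows immediately from the mean-value inequality.
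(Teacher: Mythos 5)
Your proof is correct and follows essentially the same route as the paper's: the paper likewise invokes the containment $\hK\subset\ccK$, the convexity of $\ccK$, and the mean value theorem (your FTC-plus-Cauchy--Schwarz argument is the same estimate spelled out), and then applies the displacement bound $\norm{T(\hx)-\hx}\lesssim h^2$ from \autoref{lem:Tx_minus_x} before taking the supremum. Your explicit check that both endpoints $\hx$ and $T(\hx)$ lie in $\ccK$ is a slightly more careful rendering of what the paper leaves implicit, but it is the same argument.
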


\begin{proof}
    Since $\hK\subset \ccK$ and $\ccK$ is convex, we can apply the mean value theorem to obtain
    $$
        \norm{f(\x)-f(T(\x))}\le \norm{\nabla f}_{C^0(\ccK)} \norm{T(\x)-\x},\qquad \forall \x \in \hK. 
    $$
    To finish the proof, we recall that from \autoref{lem:Tx_minus_x},  $\norm{T(\x)-\x}\lessim h^2$, and take the supremum over all $\x\in \hK$. 
\end{proof}
The lemma above allows to show that the norm of a polynomial on $\hK$ is comparable to its norm on $\cK$ as stated in the following lemma. 

\begin{lemma}\label{lem:norm_cK_norm_hK}
    There {exists} $h_1>0$ such that 
    \begin{equation}
        \norm{p}_{L^2(\hK)}\simeq \norm{p}_{L^2(\cK)} ,\qquad \forall p\in Q^m(\mathbb{R}^2), 
        \label{eqn:norm_cK_norm_hK}
    \end{equation}
    for all interface elements $K$ with diameter $h<h_1$. 
\end{lemma}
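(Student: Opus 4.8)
The plan is to transfer between $\cK$ and $\hK$ through the transition map $T=\cR\circ P$, treating the difference $p\circ T-p$ as a lower-order perturbation. First I would change variables $\mathbf{y}=T(\hx)$ in the integral defining $\norm{p}_{L^2(\cK)}^2$. Since $T$ maps $\hK$ bijectively onto $\cK=\cR(K)$ (because $T(\hK)=\cR(P(\hK))=\cR(K)$) and $|\det(\hD T)|\simeq 1$ by \eqref{eqn:jacobian_T_good}, this gives the exact comparison $\norm{p}_{L^2(\cK)}\simeq\norm{p\circ T}_{L^2(\hK)}$. The lemma then reduces to showing that $\norm{p\circ T}_{L^2(\hK)}$ and $\norm{p}_{L^2(\hK)}$ agree up to a relative error that vanishes as $h\to 0$.

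Second, I would estimate $\norm{p\circ T-p}_{L^2(\hK)}$. Bounding the $L^2(\hK)$ norm by $|\hK|^{1/2}$ times the $C^0(\hK)$ norm and invoking \autoref{lem:f_minus_f_circ_T} (applicable since $\hK\subset\ccK$ for $h<h_0$ by \autoref{lem:hK_subset_ccK}), I obtain $\norm{p\circ T-p}_{L^2(\hK)}\lesssim|\hK|^{1/2}h^2\norm{\nabla p}_{C^0(\ccK)}$. The crucial point is absorbing the factor $h^2$: because $\ccK$ is a shape-regular parallelogram of diameter $\simeq h$, a scaling argument of the same type used for \eqref{eqn:trace_inverse_cK} yields the inverse estimate $\norm{\nabla p}_{C^0(\ccK)}\lesssim h^{-2}\norm{p}_{L^2(\ccK)}$ for $p\in Q^m$. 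Combined with $|\hK|^{1/2}\simeq h$ and the homothety equivalence \eqref{eqn:homothetic_norm_invariance}, this produces the perturbation bound
\[
\norm{p\circ T-p}_{L^2(\hK)}\lesssim h\,\norm{p}_{L^2(\cK)}.
\]

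Finally, I would close the argument with the triangle inequality and the exact comparison from the first step. From $\norm{p}_{L^2(\hK)}\le\norm{p\circ T}_{L^2(\hK)}+\norm{p\circ T-p}_{L^2(\hK)}\lesssim\norm{p}_{L^2(\cK)}+h\norm{p}_{L^2(\cK)}$ I get one direction, and reversing the roles gives $\norm{p}_{L^2(\cK)}\simeq\norm{p\circ T}_{L^2(\hK)}\lesssim\norm{p}_{L^2(\hK)}+h\norm{p}_{L^2(\cK)}$; choosing $h_1$ small enough that the $O(h)$ term is absorbed into the left-hand side establishes both inequalities and hence \eqref{eqn:norm_cK_norm_hK}. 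The main obstacle is the second step: the bound from \autoref{lem:f_minus_f_circ_T} is stated in the $C^0$ norm and involves $\nabla p$, so the whole argument hinges on tracking the powers of $h$ carefully when passing to $L^2$ through the inverse inequality — one must verify that the gain $h^2$ coming from $T\approx\id$ outweighs the loss $h^{-2}$ from differentiating a polynomial on a cell of size $h$, leaving exactly one spare power of $h$ so that the perturbation is genuinely subdominant.
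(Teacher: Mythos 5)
Your proposal is correct and follows essentially the same route as the paper: both compare $\hK$ and $\cK$ through the transition map $T$ using $|\det(\hD T)|\simeq 1$, bound the perturbation $p-p\circ T$ via \autoref{lem:f_minus_f_circ_T} and a polynomial inverse estimate on $\ccK$ (yielding the crucial single spare power of $h$), and absorb the $O(h)\norm{p}_{L^2(\cK)}$ term for $h$ small. The only cosmetic difference is that the paper gets the easy direction $\norm{p}_{L^2(\hK)}\lesssim\norm{p}_{L^2(\cK)}$ directly from the containment $\hK\subset\ccK$ and the homothety equivalence \eqref{eqn:homothetic_norm_invariance}, whereas you run the perturbation argument symmetrically in both directions; both are valid.
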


\begin{proof}
    Let us assume that $h<h_0$, then by \autoref{lem:hK_subset_ccK} and \eqref{eqn:homothetic_norm_invariance}, we have 
    \begin{equation}
    \norm{p}_{L^2(\hK)}\le \norm{p}_{L^2(\ccK)}\lesssim \norm{p}_{L^2(\cK)}.
    \label{eqn:norm_hK_less_norm_cK}
    \end{equation}
On the other hand, by \eqref{eqn:jacobian_T_good}, we have 
$$\norm{p}_{L^2(\cK)}= \norm{(p\circ T)\sqrt{|\det(\hD T)|}}_{L^2(\hK)}\lesssim \norm{p\circ T}_{L^2(\hK)}.$$ 
By applying the triangle inequality and \eqref{eqn:f_minus_f_circ_T} to the right-hand side, we further have
\begin{align}\norm{p}_{L^2(\cK)}&\lesssim \norm{p}_{L^2(\hK)}
+\norm{p-p\circ T}_{L^2(\hK)}\notag\\ 
&\lesssim \norm{p}_{L^2(\hK)}
+h \norm{p-p\circ T}_{C^0(\hK)}\notag\\ 
& \lesssim \norm{p}_{L^2(\hK)}
+h^3 \norm{\nabla p}_{C^0(\ccK)}. \label{eqn:step_normp_h3}
\end{align}
Next, we use the inverse inequality for polynomials \cite{brennerMathematicalTheoryFinite2002}, the norm equivalence \eqref{eqn:homothetic_norm_invariance} and \eqref{eqn:jacobian_cP_cR_good}  to obtain 
$$\norm{\nabla p}_{C^0(\ccK)}\lesssim h^{-1}\norm{\nabla p}_{L^2(\ccK)}\lesssim h^{-1}\norm{\nabla p}_{L^2(\cK)}\lesssim h^{-2}\norm{p}_{L^2(\cK)}. $$
This, in turn,  is combined with  \eqref{eqn:step_normp_h3} to yield 
\begin{equation}\norm{p}_{L^2(\cK)} \le C\left(\norm{p}_{L^2(\hK)}
+h \norm{ p}_{L^2(\cK)}\right),
\label{eqn:C0ccK_less_L2_cK}
\end{equation}
for some constant $C$ independent of $h$ and the relative position of the interface in $K$, which, for $h< h^*= \frac{1}{2C}$, leads to $\norm{p}_{L^2(\cK)} \lesssim \norm{p}_{L^2(\hK)}$. We complete the proof  by taking  $h_1=\min(h_0,h^*)$.  
\end{proof}

Next, we derive an additional estimate on the norm of a polynomial on the boundaries of $\hK$ and $\cK$. Note that we do not expect $\norm{p}_{L^2(\p \hK)} \simeq  \norm{p}_{L^2(\p \cK)} $ since a polynomial might vanish on $\p\cK$ but not on $\p\hK$ or vice versa. Instead, we can bound each semi-norm by the norm on the whole domain $\hK$ or $\cK$ as stated in the next lemma.

\begin{lemma}
    \label{lem:norm_p_boundary_hK} 
    There exists $h_2>0$  such that  
    \begin{equation}\norm{p}_{L^2(\p \hK)} + \norm{p}_{L^2(\p \cK)} \lesssim h^{-1/2}\norm{p}_{L^2(\hK)} \simeq h^{-1/2}
    \norm{p}_{L^2(\cK)},\qquad {\forall p\in Q^m(\mathbb{R}^2)}\label{eqn:norm_p_boundary_hK}
\end{equation}
for all interface elements $K$ with diameter $h<h_2$. 
\end{lemma}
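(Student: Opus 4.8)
The plan is to bound the two boundary terms separately, since they behave quite differently: $\p\cK$ consists of the straight edges of a parallelogram, so the classical trace inequality applies verbatim, whereas $\p\hK$ is made of curved edges and needs a different argument. For the first term I would simply invoke the trace inequality \eqref{eqn:trace_inverse_cK} already established on $\cK$ and then convert the $\cK$-norm on the right-hand side into an $\hK$-norm via the norm equivalence of \autoref{lem:norm_cK_norm_hK}; this immediately gives $\norm{p}_{L^2(\p\cK)} \lesssim h^{-1/2}\norm{p}_{L^2(\cK)} \simeq h^{-1/2}\norm{p}_{L^2(\hK)}$, so the parallelogram contribution costs nothing new.

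The genuinely new work is bounding $\norm{p}_{L^2(\p\hK)}$, and this is where the main obstacle lies: because $\hK$ is a curvilinear quadrilateral, no classical trace inequality is available on its boundary, and a line-integral change of variables through the transition map $T$ would force us to track the mismatch between $p$ and $p\circ T$ on curved edges. I would sidestep this entirely by exploiting the containment $\hK\subset\ccK$ guaranteed by \autoref{lem:hK_subset_ccK} for $h$ below the threshold $h_1$. Since then $\p\hK\subset\ccK$, the plan is to estimate the curve integral crudely by the length of $\p\hK$ times the supremum norm of $p$ over the enclosing parallelogram, $\norm{p}_{L^2(\p\hK)}^2 \le |\p\hK|\,\norm{p}_{C^0(\ccK)}^2$. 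Two ingredients then close the bound: first, the perimeter satisfies $|\p\hK|\lesssim h$, which follows from the bi-Lipschitz property \eqref{eqn:DP_v_equiv_v} of $P$, hence of $R$, mapping the straight edges of $\p K$, of total length $\lesssim h$, onto $\p\hK$ with comparable length; second, the polynomial inverse inequality \cite{brennerMathematicalTheoryFinite2002} gives $\norm{p}_{C^0(\ccK)}\lesssim h^{-1}\norm{p}_{L^2(\ccK)}$, the very estimate already used in the proof of \autoref{lem:norm_cK_norm_hK}.

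Combining these with the homothety norm equivalence \eqref{eqn:homothetic_norm_invariance}, I would obtain $\norm{p}_{L^2(\p\hK)}^2 \lesssim h\cdot h^{-2}\norm{p}_{L^2(\ccK)}^2 \simeq h^{-1}\norm{p}_{L^2(\cK)}^2$, i.e. $\norm{p}_{L^2(\p\hK)}\lesssim h^{-1/2}\norm{p}_{L^2(\cK)}$. The reason this loses nothing is a balance of scales: one pays a full power $h^{-1}$ to pass from $L^2$ to $L^\infty$ on a two-dimensional region of diameter $h$, but recovers a half power $h^{1/2}$ from the short perimeter of $\p\hK$, for a net $h^{-1/2}$ that matches the sharp trace scaling. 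Adding the two boundary contributions and applying \autoref{lem:norm_cK_norm_hK} one last time to interchange $\norm{p}_{L^2(\cK)}$ and $\norm{p}_{L^2(\hK)}$ yields \eqref{eqn:norm_p_boundary_hK}, valid for $h<h_2:=h_1$. The one point to keep honest throughout is that every hidden constant remains independent of the interface location, which it does because \eqref{eqn:DP_v_equiv_v}, \eqref{eqn:jacobian_cP_cR_good}, the homothety equivalence, and \autoref{lem:norm_cK_norm_hK} all carry interface-uniform constants.
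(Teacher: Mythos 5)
Your proposal is correct, but the way you handle the curved boundary $\p\hK$ is genuinely different from the paper's argument. For the $\p\cK$ term you do exactly what the paper does: apply \eqref{eqn:trace_inverse_cK} and then \autoref{lem:norm_cK_norm_hK}. For the $\p\hK$ term, however, the paper works edge by edge: it takes an arclength parametrization $\hgamma$ of each curved edge $\he=R(e)$, transports it to the straight edge $\ce=\cR(e)$ via $\cgamma=T\circ\hgamma$, and then controls three separate error sources --- the mismatch $p\circ\hgamma-p\circ T\circ\hgamma$ via \autoref{lem:f_minus_f_circ_T}, the failure of $\cgamma$ to be unit speed via \eqref{eqn:DT_minus_I}, and the edge length via \eqref{eqn:size_of_I_e} --- to conclude $\norm{p}_{L^2(\he)}\lesssim\norm{p}_{L^2(\ce)}+h\norm{p}_{L^2(\hK)}$. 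You bypass all of this with a sup-norm argument: $\norm{p}_{L^2(\p\hK)}^2\le|\p\hK|\,\norm{p}_{C^0(\ccK)}^2$, using the containment $\hK\subset\ccK$ from \autoref{lem:hK_subset_ccK}, the perimeter bound $|\p\hK|\lesssim h$ from the bi-Lipschitz property \eqref{eqn:DP_v_equiv_v}, the two-dimensional $L^\infty$--$L^2$ inverse inequality on the uniformly shape-regular parallelogram $\ccK$, and the homothety equivalence \eqref{eqn:homothetic_norm_invariance} plus \autoref{lem:norm_cK_norm_hK} to return to $\norm{p}_{L^2(\hK)}$. Your scaling accounting is right: the crude sup-norm bound loses nothing for fixed-degree polynomials, since $h^{1/2}\cdot h^{-1}$ reproduces the sharp $h^{-1/2}$ trace scaling (and would do so in any dimension). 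What your route buys is brevity and the elimination of the edge-parametrization machinery (\autoref{lem:f_minus_f_circ_T} is not needed at all in your proof, though it still enters indirectly through the proof of \autoref{lem:norm_cK_norm_hK}); what the paper's route buys is a structural edge-by-edge comparison between $\norm{p}_{L^2(\he)}$ and $\norm{p}_{L^2(\ce)}$, which carries more information than the bulk bound but is not actually required by the statement of the lemma. One small bookkeeping remark: the containment $\hK\subset\ccK$ is guaranteed for $h<h_0$ by \autoref{lem:hK_subset_ccK} (not $h_1$), but since \autoref{lem:norm_cK_norm_hK} takes $h_1\le h_0$, your choice $h_2:=h_1$ does make every step valid simultaneously.
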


\begin{proof}
    By \autoref{lem:norm_cK_norm_hK} and \eqref{eqn:trace_inverse_cK}, we have 
   \begin{equation}
    \norm{p}_{L^2(\p \cK)} \lessim h^{-1/2}\norm{p}_{L^2(\cK)}\simeq  h^{-1/2}\norm{p}_{L^2(\hK)}.\label{eqn:norm_p_boundary_cK}
   \end{equation}

    To prove the  inequality $\norm{p}_{L^2(\p \hK)} \lesssim { h^{-1/2}}\norm{p}_{L^2(\hK)}$, we first consider an edge $e$ of $K$ and map it {using} $R$ and $\cR$ to obtain $\he={R(e)}\subset \p\hK$ and  $\ce=\cR(e) \subset \p \cK$, respectively. Let $\hgamma :I_{\he}\subset \mathbb{R}\to \he$ be an arclength parametrization of $\he$,
    and we use the transition map $T$ to  define $\cgamma=T\circ\hgamma$, which is a  parametrization of $\ce$ {as illustrated in \autoref{fig:hgamma_and_cgamma}}. This parametrization is not necessarily an arclength parametrization; however, we can use \eqref{eqn:DT_minus_I} to show that it converges to an arclength parametrization under mesh refinement:  
\begin{figure}
    \center
    \includegraphics[scale=.8]{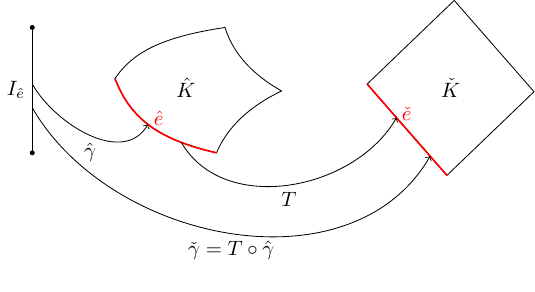}
    \caption{An illustration of the parametrizations $\hgamma$ and $\cgamma$. }
    \label{fig:hgamma_and_cgamma}
\end{figure}
    \begin{align}
        \big|\norm{\cgamma'(t)}-1\big|&\le \norm{\cgamma'(t)- \hgamma'(t)}\notag \\
       & =\norm{\hD T(\hgamma(t))\hgamma'(t)- \hgamma'(t)}\notag \\ 
        & \lesssim \max_{\hx\in\hK}\norm{\hD T(\hx)-I_2} \lesssim h \qquad \forall t\in I_{\he}. \label{eqn:norm_cgamma_hgamma} 
    \end{align}
     Next, let $\hp=p\circ \hgamma$ and $\cp=p\circ\cgamma$ for $p\in Q^m(\mathbb{R}^2)$, by the construction of $\hgamma$ and $\cgamma$, we have 
\begin{equation} \norm{p}_{L^2(\he)} = \norm{\hp }_{L^2(I_{\he})}\qquad \text{and}\qquad \norm{p}_{L^2(\ce)} = \norm{\cp \sqrt{\norm{\cgamma'}}}_{L^2(I_{\he})}.
\label{eqn:norm_p_e_ce}
\end{equation}
Therefore, by \eqref{eqn:norm_p_e_ce} and the triangle inequality, we obtain


\begin{align}\label{eqn:norm_p_he_pre}    
\begin{split}
\norm{p}_{L^2(\he)}&
        = \norm{ \hp}_{L^2(I_{\he})} = \norm{ \cp \sqrt{\norm{\cgamma'}} + \hp - \cp \sqrt{\norm{\cgamma'}} }_{L^2(I_{\he}) } \\ 
        &\le \norm{p}_{L^2(\ce)} + \norm{\hp - \cp \sqrt{\norm{\cgamma'}}}_{{L^2(I_{\he})}} \\
        & \le \norm{p}_{L^2(\ce)} + \norm{\hp - \cp }_{L^2(I_{\he})} + \norm{\left(\sqrt{\norm{\cgamma'}}-1\right){\cp} }_{L^2(I_{\he})} \\ 
        &\lesssim \norm{p}_{L^2(\ce)} + \norm{\hp - \cp }_{L^2(I_{\he})} +{h} \norm{{\cp} }_{L^2(I_{\he})}, 
\end{split}  
\end{align}
    where the {inequality $|\sqrt{a}-1|\lesssim |a-1|$} and \eqref{eqn:norm_cgamma_hgamma} are used in the last line. Next, we use \autoref{lem:f_minus_f_circ_T} to bound $\hp-\cp$:
   \begin{align}\norm{\hp - \cp }_{L^2(I_{\he})}&=
    \norm{p\circ\hgamma - p\circ\cgamma }_{L^2(I_{\he})}= \norm{p\circ\hgamma - p\circ T\circ\hgamma }_{L^2(I_{\he})}
    \notag \\  
    &\le |I_{\he}| \norm{p\circ\hgamma - p\circ T\circ\hgamma }_{C^0(I_{\he})} \le |I_{\he}| \norm{p - p\circ T }_{C^0(\hK)}
    \notag \\
    &\lesssim |I_{\he}|h^2 \norm{\nabla p}_{C^0(\ccK)}. \label{eqn:hp_minus_cp}\end{align}
 In the next step, we estimate $|I_{\he}|$ by first noticing that $P\circ \hgamma$ is a parametrization of $e$. Therefore, by \eqref{eqn:DP_v_equiv_v} and since  $\hgamma$ is an arclength parametrization, we have 
 \begin{equation}h\simeq |e| = \int_{I_{\he}} \norm{\frac{d}{dt}P(\hgamma(t))}\ dt= \int_{I_{\he}} \norm{\hD P(\hgamma(t))\hgamma'(t)}\ dt\simeq 
 \int_{I_{\he}} \norm{\hgamma'(t)}\ dt =|I_{\he}|.
 \label{eqn:size_of_I_e}
 \end{equation}
 Now, \eqref{eqn:hp_minus_cp} reads 
    \begin{equation}\norm{\hp - \cp }_{L^2(I_{\he})} \lesssim
                    h^3 \norm{\nabla p}_{C^0(\ccK)} 
            \lessim  h^2 \norm{\nabla p}_{L^2(\ccK)} 
            \lessim  h^2 \norm{\nabla p}_{L^2(\cK)}         
            \lesssim h \norm{p}_{L^2(\hK)},
                    \label{eqn:norm_hp_minus_cp}
    \end{equation}
where the last inequality follows from \eqref{eqn:trace_inverse_cK} and \autoref{lem:norm_cK_norm_hK}. 
Also, from \eqref{eqn:norm_cgamma_hgamma} and \eqref{eqn:norm_p_boundary_cK}, we have $\norm{\cp}_{L^2(I_{\he})}\simeq \norm{p}_{L^2(\ce)}\lesssim {h^{-1/2}}\norm{p}_{L^2(\hK)}$ for $h<h_2$ for some $h_2\in(0,h_1)$. Applying this estimate, \eqref{eqn:norm_hp_minus_cp} and 
\eqref{eqn:norm_p_boundary_cK} to \eqref{eqn:norm_p_he_pre} leads to 
\begin{align*}
    \norm{p}_{L^2(\he)} &\lessim \left({h^{-1/2}}\norm{p}_{L^2(\hK)}+{h}\norm{p}_{L^2(\hK)}+h^{1/2}\norm{p}_{L^2(\hK)}\right).
\end{align*}
Now, we take the sum over the four edges {of $\hK$}  to obtain 
$$
\norm{p}_{L^2(\p\hK)} \lessim {h^{-1/2}}\norm{p}_{L^2(\hK)},
$$
which, when combined with \eqref{eqn:norm_p_boundary_cK}, yields \eqref{eqn:norm_p_boundary_hK}.
\end{proof}

We are now ready to state our first inverse and trace inequalities on $\hK$ which is 
a quadrilateral with curved edges. Here, we are only concerned with the polynomial space $Q^m(\hK)$, which is a simpler space than $\hV^m_{\beta}(\hK)$.

\begin{lemma}\label{lem:inverse_trace_hK}
    Let $h_2$ be the constant from \autoref{lem:norm_p_boundary_hK}, then  

    \begin{equation}\norm{\nabla p}_{L^2(\hK)}+h^{-1/2}\norm{p}_{L^2(\p\hK)}\lessim h^{-1}\norm{p}_{L^2(\hK)}, \qquad \forall p\in Q^m(\hK),
\label{eqn:trace_inverse_Q_hK}
    \end{equation}
    for all interface elements $K$ with diameter $h<h_2$.
\end{lemma}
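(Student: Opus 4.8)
The plan is to obtain both terms on the left side of \eqref{eqn:trace_inverse_Q_hK} by transporting the inverse and trace inequalities \eqref{eqn:trace_inverse_cK}, which are already available on the parallelogram $\cK$, over to the curved quadrilateral $\hK$. The norm-equivalence results \autoref{lem:norm_cK_norm_hK} and \autoref{lem:norm_p_boundary_hK} serve as the bridge, so no new geometric estimate is required; the whole argument is a concatenation of the inequalities established earlier in this subsection. Throughout I work for $h<h_2$, and I note that $h_2\in(0,h_1)$ so that \autoref{lem:norm_cK_norm_hK} is simultaneously applicable.

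The boundary (trace) contribution is essentially immediate. \autoref{lem:norm_p_boundary_hK} already gives $\norm{p}_{L^2(\p\hK)}\lesssim h^{-1/2}\norm{p}_{L^2(\hK)}$ for every $p\in Q^m(\mathbb{R}^2)$, and multiplying this by $h^{-1/2}$ yields $h^{-1/2}\norm{p}_{L^2(\p\hK)}\lesssim h^{-1}\norm{p}_{L^2(\hK)}$, which is precisely the boundary term in \eqref{eqn:trace_inverse_Q_hK}.

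For the interior (inverse) contribution, the one thing to observe is that each component of $\nabla p$ is itself a tensor-product polynomial of degree at most $m$: differentiating with respect to a single variable lowers only that variable's degree, so $\p_x p,\ \p_y p\in Q^m(\mathbb{R}^2)$. Consequently I may apply \autoref{lem:norm_cK_norm_hK} componentwise to $\nabla p$ and conclude $\norm{\nabla p}_{L^2(\hK)}\simeq\norm{\nabla p}_{L^2(\cK)}$. Chaining this with the inverse inequality on $\cK$ from \eqref{eqn:trace_inverse_cK} and then with \autoref{lem:norm_cK_norm_hK} applied to $p$ itself gives $\norm{\nabla p}_{L^2(\hK)}\simeq\norm{\nabla p}_{L^2(\cK)}\lesssim h^{-1}\norm{p}_{L^2(\cK)}\simeq h^{-1}\norm{p}_{L^2(\hK)}$. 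Adding the two contributions produces \eqref{eqn:trace_inverse_Q_hK}.

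The only place demanding a moment of care — and the closest thing here to an obstacle — is justifying that the gradient of a $Q^m$ polynomial remains in $Q^m$, so that \autoref{lem:norm_cK_norm_hK} (stated only for $Q^m(\mathbb{R}^2)$) can legitimately be invoked on $\nabla p$ rather than merely on $p$. Once that degree bookkeeping is settled, the argument carries no genuine difficulty and reduces to a short assembly of the previously proved estimates.
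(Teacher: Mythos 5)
Your proof is correct and follows essentially the same route as the paper's: the boundary term comes directly from \autoref{lem:norm_p_boundary_hK}, and the gradient term from the chain $\norm{\nabla p}_{L^2(\hK)}\lesssim\norm{\nabla p}_{L^2(\cK)}\lesssim h^{-1}\norm{p}_{L^2(\cK)}\lesssim h^{-1}\norm{p}_{L^2(\hK)}$ via \autoref{lem:norm_cK_norm_hK} and \eqref{eqn:trace_inverse_cK}. Your explicit degree bookkeeping (that $\p_x p,\p_y p\in Q^m(\mathbb{R}^2)$, so the norm equivalence applies componentwise to $\nabla p$) is a detail the paper leaves implicit, and it is a valid justification.
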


\begin{proof}
     By \autoref{lem:norm_cK_norm_hK} and \eqref{eqn:trace_inverse_cK}, we have 
     $$\norm{\nabla p}_{L^2(\hK)}\lesssim \norm{\nabla p}_{L^2(\cK)}
     \lesssim h^{-1}\norm{ p}_{L^2(\cK)}
     \lesssim h^{-1}\norm{p}_{L^2(\hK)}.
     $$ 
     From \autoref{lem:norm_p_boundary_hK}, we have $h^{-1/2}\norm{p}_{L^2(\p\hK)}\lesssim h^{-1}\norm{p}_{L^2(\hK)}$ {which, combined with the previous inequality,  completes the proof.}
\end{proof}

\subsection{The trace  inequality for Frenet IFE functions}\label{subsec:Inverse_trace} 
In this section, we will employ the lemmas from the previous section to derive a trace inequality for IFE functions  in $\hV^m_{\beta}(\hK)$ and $\V^m_{\beta}(K)$. First, we recall that $\hV^m_{\beta}(\hK)$ consists of piecewise polynomials $\hphi$ {such that}
\begin{subequations}\label{eqn:discrete_interface_conditions}
    \begin{equation}
        \hphi\big|_{\hK^{\pm}} \in Q^m(\hK^{\pm}),\quad 
        \bb{\hphi}_{\hGamma_{K_F}} =  \bb{\hbeta\hphi_{\eta}}_{\hGamma_{K_F}} =0,
        \label{eqn:first_discrete_interface_conditions}
        \end{equation}
and 
\begin{equation}
  \left(\bb{\hbeta\frac{\p^j}{\p\eta^j} \L(\hphi)}_{\hGamma_{K_F}},v\right)_{\hGamma_{K_F}}=0,\ \forall v\in \P^m(\hGamma_{K_F}),\ 0\le j\le m-2,
  \label{eqn:second_discrete_interface_conditions}
        \end{equation}
\end{subequations}
where $\hbeta,\hK^{\pm}, \hGamma_{K_F}$ and $\L$ as summarized in \autoref{sec:space_scheme}. Here, we note the slight abuse of notation where $\hphi$ is defined on $\hK$ but the integrals in \eqref{eqn:second_discrete_interface_conditions} are defined on $\hGamma_{K_F}$ which may extend beyond $\hK$. This is not an issue since $\hphi\big|_{\hK^\pm}$ extends naturally to a  polynomial on $\mathbb{R}^{\pm}\times \mathbb{R}$.

In \cite{adjeridHighOrderGeometry2024}, we have shown that $\dim \hV^m_{\beta}(\hK)=(m+1)^2$ and described an efficient procedure to construct a basis for this IFE space. We now decompose $\hV^m_{\beta}(\hK)$ as {a direct sum of two subspaces}: a subspace of polynomials and a space of polynomials times a piecewise constant function. To describe these two subspaces, we consider two polynomial spaces as follows:
\begin{equation}
    X^{(0)} =\left\{\hphi\in Q^m(\hK)\mid \hphi_{\eta}\big|_{\hGamma_{K_F}}{=0\text{ and } } \left(\frac{\p}{\p \eta^j}\L(\hphi),v\right)_{\hGamma_{K_F}}=0,\ \forall v\in \P^m(\hGamma_{K_F}),\ 0\le j\le m-2 \right\},\label{eqn:def_X0}
\end{equation}

\begin{equation}
    X^{(1)} ={\operatorname{span}\left(\left\{\eta^j\xi^i,\ 0\le i\le m,\ 1\le j\le m \right\}\right)}.\label{eqn:def_X1}
\end{equation}
Again, we evaluate $\hphi$ {on} $\hGamma_{K_F}$ in \eqref{eqn:def_X0} {which extends outside $\hK$}. In the following lemma we show that every function in  $\hV^m_{\beta}(\hK)$ can be decomposed as a polynomial in $X^{(0)}$ plus another polynomial in $X^{(1)}$ times $\frac{1}{\hbeta}$. 

\begin{lemma}\label{lem:hV_decomposition}
    Let $\hphi \in \hV^m_{\beta}(\hK)$, then there {exist} two polynomials $\hphi_0\in X^{(0)}$ and $\hphi_1\in X^{(1)}$ such that 
    $$\hphi= \hphi^{(0)}  +\frac{1}{\hbeta}\hphi^{(1)}.$$
    Furthermore, $\hV^m_{\beta}(\hK)=X^{(0)}\oplus \frac{1}{\hbeta}X^{(1)}$ where $\oplus$ denotes direct sum. 
\end{lemma}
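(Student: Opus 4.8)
The plan is to exploit two structural facts: that $\hbeta=\beta\circ P$ is the piecewise constant function taking the value $\beta^{\pm}$ on $\hK^{\pm}$, and that both $X^{(0)}$ and $\frac{1}{\hbeta}X^{(1)}$ are built from genuine (single) polynomials. Writing $\hphi^{\pm}:=\hphi|_{\hK^{\pm}}\in Q^m(\hK^{\pm})$ and regarding each piece as a polynomial on all of $\mathbb{R}^2$, any admissible representation $\hphi=\hphi^{(0)}+\frac{1}{\hbeta}\hphi^{(1)}$ must read $\hphi^{\pm}=\hphi^{(0)}+\frac{1}{\beta^{\pm}}\hphi^{(1)}$ on $\hK^{\pm}$. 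Subtracting these two identities forces
\[
\hphi^{(1)}=\frac{\beta^+\beta^-}{\beta^--\beta^+}\left(\hphi^+-\hphi^-\right),
\]
which motivates me to \emph{define} $\hphi^{(1)}$ by this formula (here $\beta^+\ne\beta^-$, as holds for a genuine interface) and then set $\hphi^{(0)}:=\hphi-\frac{1}{\hbeta}\hphi^{(1)}$.

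First I would check that $\hphi^{(1)}\in X^{(1)}$. The continuity condition $\bb{\hphi}_{\hGamma_{K_F}}=0$ of \eqref{eqn:first_discrete_interface_conditions} forces $\hphi^+$ and $\hphi^-$ to agree along the segment $\hGamma_{K_F}\subset\{\eta=0\}$; being polynomials of degree at most $m$ in $\xi$ that agree on a segment of positive length, they coincide identically at $\eta=0$, so $\hphi^+-\hphi^-$ is a $Q^m$ polynomial divisible by $\eta$ and hence lies in $X^{(1)}$. Next I would verify that $\hphi^{(0)}$ is a single polynomial: substituting the formula for $\hphi^{(1)}$ into $\hphi^{\pm}-\frac{1}{\beta^{\pm}}\hphi^{(1)}$ and simplifying, both pieces collapse to the \emph{same} polynomial $\frac{\beta^-\hphi^--\beta^+\hphi^+}{\beta^--\beta^+}\in Q^m(\hK)$, so $\hphi^{(0)}$ has no jump across $\hGamma_{K_F}$. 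Finally I would confirm $\hphi^{(0)}\in X^{(0)}$: differentiating this common expression in $\eta$ and restricting to $\eta=0$, the flux condition $\bb{\hbeta\hphi_\eta}_{\hGamma_{K_F}}=0$ yields $\hphi^{(0)}_\eta|_{\hGamma_{K_F}}=0$; and, using the linearity of $\L$ to distribute it over the same linear combination before applying $\p_{\eta^j}$ and pairing against $v\in\P^m(\hGamma_{K_F})$, the weak conditions \eqref{eqn:second_discrete_interface_conditions} give $(\p_{\eta^j}\L(\hphi^{(0)}),v)_{\hGamma_{K_F}}=0$ for $0\le j\le m-2$. This establishes the asserted decomposition.

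To promote the decomposition to the direct-sum identity, I would establish the two reverse inclusions together with triviality of the intersection. For $X^{(0)}\subset\hV^m_\beta(\hK)$, any $\hphi^{(0)}\in X^{(0)}$ is a single polynomial, so the jumps in \eqref{eqn:first_discrete_interface_conditions}--\eqref{eqn:second_discrete_interface_conditions} reduce to $(\beta^+-\beta^-)$ times quantities that vanish by the definition \eqref{eqn:def_X0}. For $\frac{1}{\hbeta}X^{(1)}\subset\hV^m_\beta(\hK)$, the piecewise constant $\hbeta$ cancels on each side, so $\hbeta\,\p_\eta\!\left(\frac{1}{\hbeta}\hphi^{(1)}\right)=\hphi^{(1)}_\eta$ and $\hbeta\,\p_{\eta^j}\L\!\left(\frac{1}{\hbeta}\hphi^{(1)}\right)=\p_{\eta^j}\L(\hphi^{(1)})$ are single polynomials with no jump, while $\frac{1}{\hbeta}\hphi^{(1)}$ itself has no jump because $\hphi^{(1)}$ vanishes at $\eta=0$; hence all conditions hold. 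Triviality of the intersection is immediate: if $\hphi^{(0)}=\frac{1}{\hbeta}\hphi^{(1)}$ with $\hphi^{(0)}$ a single polynomial, comparing pieces gives $\left(\frac{1}{\beta^+}-\frac{1}{\beta^-}\right)\hphi^{(1)}=0$, so $\hphi^{(1)}=0$ and $\hphi^{(0)}=0$. As a byproduct, $\dim X^{(0)}=\dim\hV^m_\beta(\hK)-\dim X^{(1)}=(m+1)^2-m(m+1)=m+1$, consistent with $\dim\hV^m_\beta(\hK)=(m+1)^2$ from \cite{adjeridHighOrderGeometry2024}.

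The one genuinely delicate point is the passage from the $\hbeta$-\emph{weighted} jump conditions satisfied by $\hphi$, whose weight is discontinuous across $\hGamma_{K_F}$, to the \emph{unweighted, homogeneous} conditions defining $X^{(0)}$. This hinges simultaneously on the linearity of $\L$, which lets $\L$ distribute over the fixed combination $\frac{\beta^-\hphi^--\beta^+\hphi^+}{\beta^--\beta^+}$, and on $\hbeta$ being piecewise \emph{constant}, so that it commutes past $\p_\eta$ and $\L$ and cancels cleanly against the $\frac{1}{\hbeta}$ factor. Once this is recognized, the remaining verifications are routine algebra.
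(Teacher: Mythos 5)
Your proof is correct, but it takes a genuinely different route from the paper's for the existence part of the decomposition. The paper argues by dimension counting: it shows $X^{(0)}\subset \hV^m_{\beta}(\hK)$ and $\frac{1}{\hbeta}X^{(1)}\subset \hV^m_{\beta}(\hK)$ (the same verifications you perform), computes $\dim X^{(1)}=m(m+1)$ and $\dim X^{(0)}=m+1$ (the latter by counting constraints, citing linear independence from \cite{adjeridHighOrderGeometry2024}), shows the intersection is trivial by invoking the unisolvence of the conditions \eqref{eqn:discrete_interface_conditions} proved in \cite{adjeridHighOrderGeometry2024}, and concludes from $\dim\hV^m_{\beta}(\hK)=(m+1)^2$ that the direct sum exhausts the space. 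You instead construct the decomposition explicitly, solving for $\hphi^{(1)}=\frac{\beta^+\beta^-}{\beta^--\beta^+}\left(\hphi^+-\hphi^-\right)$ and checking membership in $X^{(1)}$ and $X^{(0)}$ directly from the jump conditions; your intersection argument is likewise elementary (comparing the two polynomial pieces on open sets) rather than an appeal to unisolvence. Your approach buys self-containedness and explicit projection formulas, and it derives $\dim X^{(0)}=m+1$ as a corollary instead of needing it as an input; the paper's approach buys uniformity in $\beta$: your division by $\beta^--\beta^+$ requires $\beta^+\neq\beta^-$, whereas the paper only assumes $\beta^+\ge\beta^->0$, so its argument also covers the degenerate equal-coefficient case (of no practical interest for an interface problem, but formally allowed by the paper's hypotheses). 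You flag this restriction explicitly, so it is a stated assumption rather than a hidden gap; to match the paper's generality you would fall back on the unisolvence argument for that one degenerate case.
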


\begin{proof}
    The proof goes as follows: First, we show that $X^{(0)}$ and $\frac{1}{\hbeta}X^{(1)}$ are subspaces of $\hV^m_{\beta}(\hK)$. After that, {we show that their dimensions sum up to $(m+1)^2$ and prove that their intersection is trivial. }


    For a detailed proof of $\frac{1}{\hbeta}X^{(1)}\subset \hV^m_{\beta}(\hK)$, we refer the reader to Lemma 3 in \cite{adjeridHighOrderGeometry2024}. The main idea is that if $\hphi\in \frac{1}{\hbeta}X^{(1)}$ then $\hphi =\frac{1}{\hbeta}p$ for some polynomial $p$ that satisfies $p(0,\xi)=0$, { leading to} $\hphi\big|_{\hGamma_{K_F}}=0$, which, in turn, yields, $\bb{\hphi}_{\hGamma_{K_F}}{=0}$. Additionally, we have $$\bb{\hbeta\hphi_{\eta}}_{\hGamma_{K_F}} =  \bb{\hbeta\frac{\p^j}{\p\eta^j} \L(\hphi)}_{\hGamma_{K_F}}=0.$$ 
    Hence, $\hphi\in \hV^m_{\beta}(\hK)$. As for the dimension, we can use \eqref{eqn:def_X1} to deduce that $\dim(X^{(1)})=m(m+1)$.  On the other hand, to show that  $X^{(0)}\subset \hV^m_{\beta}(\hK)$, we consider a polynomial $p\in X^{(0)}$, then $\bb{p}_{\hGamma_{K_F}}{=0}$ since it is continuous. Similarly, we have 
    $$\bb{\hbeta p{_\eta}}_{\hGamma_{K_F}} = \bb{\hbeta}_{\hGamma_{K_F}} p{_\eta}\big|_{\hGamma_{K_F}} =0,
    $$ 
    $$  \left(\bb{\hbeta\frac{\p^j}{\p\eta^j} \L(\hphi)}_{\hGamma_{K_F}},v\right)_{\hGamma_{K_F}}=\bb{\hbeta}_{\hGamma_{K_F}}\left(\frac{\p^j}{\p\eta^j} \L(p),v\right)_{\hGamma_{K_F}}=0,\ \forall v\in \P^m(\hGamma_{K_F}),\ 0\le j\le m-2.$$
    Next, we show that $\dim(X^{(0)})=m+1$. For that, we consider the set of polynomials $p\in Q^m(\hK)$ that satisfy
    \begin{equation}p_{\eta}\big|_{\hGamma_{K_F}}=0,\qquad \left(\frac{\p^j}{\p\eta^j}\L(p),v\right)_{\hGamma_{K_F}}=0,\quad \forall v\in\P^m(\hGamma_{K_F}),\ 0\le j\le m-2.
        \label{eqn:X0_p_def}
    \end{equation}
    By expressing $p$ in terms of a basis of $Q^m(\hK)$, the conditions \eqref{eqn:X0_p_def} will lead to a set of $m(m+1)$ equations with $(m+1)^2$ {unknowns}.  Since these equations are linearly independent \cite{adjeridHighOrderGeometry2024}{, we have} $\dim(X^{(0)})=m+1$.

    It remains to show that $X^{(0)} \cap \frac{1}{\hbeta}X^{(1)}=\{0\}$ where $\frac{1}{\hbeta}X^{(1)}$ is the space consisting of functions $\frac{1}{\hbeta}p$ where $p\in X^{(1)}$. If $\hphi\in {X^{(0)}\cap} \frac{1}{\hbeta}X^{(1)}$, then 
    $$\hphi\big|_{\hGamma_{K_F}}=\hphi_{\eta}\big|_{\hGamma_{K_F}}=0,\qquad \left(\frac{\p^j}{\p\eta^j}\L(\hphi),v\right)_{\hGamma_{K_F}}=0,\quad \forall v\in\P^m(\hGamma_{K_F}),$$
    which leads to  $\hphi=0$ by the well-posedness of \eqref{eqn:discrete_interface_conditions}, {which is proven in \cite{adjeridHighOrderGeometry2024}. }Therefore, $\dim(X^{(0)} + \frac{1}{\hbeta}X^{(1)}) =\dim(\hV^m_{\beta}(\hK))$. 

\end{proof}


In the remainder of this article, for every function $\hphi \in \hV^m_{\beta}(\hK)$, we will use $\hphi^{(0)}$ and $\hphi^{(1)}$ to denote the {polynomials described in \autoref{lem:hV_decomposition}}. Additionally, we will assume that {$h$ is small enough for \autoref{lem:norm_p_boundary_hK} to hold}.


\begin{lemma}\label{lem:X0_decomposition}
    Let $\hphi^{(0)}\in X^{(0)}$, then $\hphi^{(0)}$ can be decomposed (uniquely) as $$
    \hphi^{(0)}(\eta,\xi)= \hphi^{(0,0)}(\xi) +\hphi^{(0,1)}(\eta,\xi) ,\qquad \forall (\eta,\xi)\in \hK,$$
    where $\hphi^{(0,0)}$ is an $m$-th degree (univariate) polynomial and $\hphi^{(0,1)}\in X^{(1)}$ with $\hphi_{\eta}^{(0,1)}(0, \xi)=0$.  
\end{lemma}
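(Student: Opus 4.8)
The plan is to produce the decomposition by the elementary splitting of a tensor-product polynomial along the interface axis $\eta = 0$. Given $\hphi^{(0)} \in X^{(0)}$, I would define $\hphi^{(0,0)}(\xi) := \hphi^{(0)}(0,\xi)$ and $\hphi^{(0,1)}(\eta,\xi) := \hphi^{(0)}(\eta,\xi) - \hphi^{(0)}(0,\xi)$, so that $\hphi^{(0)} = \hphi^{(0,0)} + \hphi^{(0,1)}$ holds identically. Since $\hphi^{(0)} \in Q^m(\hK)$ has degree at most $m$ in each of $\eta$ and $\xi$, its restriction to $\eta = 0$ is a univariate polynomial in $\xi$ of degree at most $m$, which is the claimed form of $\hphi^{(0,0)}$.

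To place $\hphi^{(0,1)}$ in $X^{(1)}$, I would expand $\hphi^{(0)}(\eta,\xi) = \sum_{j=0}^{m}\sum_{i=0}^{m} c_{ji}\,\eta^j\xi^i$; subtracting the value at $\eta = 0$ annihilates precisely the $j = 0$ terms, leaving $\hphi^{(0,1)} = \sum_{j=1}^{m}\sum_{i=0}^{m} c_{ji}\,\eta^j\xi^i$, which lies in $X^{(1)}$ by its definition \eqref{eqn:def_X1}. The remaining derivative condition is where the hypothesis $\hphi^{(0)}\in X^{(0)}$ (rather than merely $\hphi^{(0)}\in Q^m(\hK)$) enters: the defining relation $\hphi^{(0)}_\eta|_{\hGamma_{K_F}} = 0$ in \eqref{eqn:def_X0}, together with the facts that $\hGamma_{K_F}$ lies on the axis $\eta = 0$ and that $\hphi^{(0,0)}$ is independent of $\eta$, yields $\hphi^{(0,1)}_\eta(0,\xi) = \hphi^{(0)}_\eta(0,\xi) = 0$ after noting that this $\xi$-polynomial vanishes on the interval $[\xi_0,\xi_1]$ and hence identically.

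For uniqueness, I would evaluate any competing decomposition $\hphi^{(0)} = a(\xi) + b(\eta,\xi)$ with $a$ univariate and $b\in X^{(1)}$ at $\eta = 0$: every monomial of $X^{(1)}$ carries a factor $\eta^j$ with $j\ge 1$, so $b(0,\xi)=0$, forcing $a(\xi) = \hphi^{(0)}(0,\xi) = \hphi^{(0,0)}(\xi)$ and then $b = \hphi^{(0,1)}$. I do not anticipate any genuine obstacle here: the statement is a bookkeeping observation separating the $\eta$-independent part of a tensor-product polynomial from its remainder, and the only nontrivial input is the vanishing of the first normal derivative at the interface, which is handed to us directly by membership in $X^{(0)}$.
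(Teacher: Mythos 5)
Your proposal is correct and follows essentially the same route as the paper: both split off the $\eta$-independent part of the tensor-product expansion (your $\hphi^{(0,0)}(\xi)=\hphi^{(0)}(0,\xi)$ is exactly the sum of the $j=0$ monomials in the paper's proof), place the remainder in $X^{(1)}$, and obtain $\hphi^{(0,1)}_{\eta}(0,\xi)=0$ directly from the defining condition $\hphi^{(0)}_{\eta}\big|_{\hGamma_{K_F}}=0$ in \eqref{eqn:def_X0}. Your explicit uniqueness argument and the remark that a $\xi$-polynomial vanishing on the segment $\hGamma_{K_F}$ vanishes identically are details the paper leaves implicit, but they do not change the approach.
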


\begin{proof} {The polynomial} $\hphi^{(0)}$ can be written as 
  $$\hphi^{(0)}(\eta,\xi)=\sum_{i=0}^m\sum_{j=0}^m c_{i,j}\eta^j\xi^i=\underbrace{\sum_{i=0}^m c_{i,0}\xi^i}_{=\hphi^{(0,0)}(\xi)} +\underbrace{\sum_{i=0}^m\sum_{j=1}^m c_{i,j}\eta^j\xi^i}_{=\hphi^{(0,1)}(\eta,\xi)}.$$
   Lastly, the claim $\hphi_{\eta}^{(0,1)}(0, \xi)=0$ follows immediately from the definition \eqref{eqn:def_X0} and our construction of $\hphi^{(0,0)}$. 
\end{proof}


Next, we will show that the $L^2$ norm of $\hnabla\hphi^{(0,1)}$ on the Frenet fictitious element $\hK_F$ is bounded by the $L^2$ norm of $\hphi_{\xi}^{(0,0)}$ on $\hGamma_{K_F}$

\begin{lemma}\label{lem:phi_01_phi_00_bound}
    Let $\hphi^{(0)}\in X^{(0)}$, then 
    
\begin{equation}\norm{\hnabla \hphi^{(0,1)}}_{L^2(\hK_F)}\lessim h^{1/2} \norm{ \hphi^{(0,0)}_{\xi}}_{L^2(\hGamma_{K_F})},
    \label{eqn:phi_01_phi_00_bound}\end{equation}
    where $\hphi^{(0)}(\eta,\xi)=\hphi^{(0,0)}(\xi)+\hphi^{(0,1)}(\eta,\xi)$ is the decomposition described in \autoref{lem:X0_decomposition}. Consequently, we have  
    \begin{equation}\norm{\hnabla \hphi^{(0)}}_{L^2(\hK_F)}\lessim h^{1/2} \norm{ \hphi^{(0,0)}_{\xi}}_{L^2(\hGamma_{K_F})}.
        \label{eqn:phi_0_phi_00_bound}\end{equation}
\end{lemma}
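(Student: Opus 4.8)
The plan is to expand $\hphi^{(0)}$ in powers of the normal coordinate, writing $\hphi^{(0)}(\eta,\xi)=\sum_{j=0}^m\psi_j(\xi)\eta^j$ with each $\psi_j\in\P^m(\hGamma_{K_F})$, so that $\psi_0=\hphi^{(0,0)}$ and $\hphi^{(0,1)}=\sum_{j\ge 1}\psi_j\eta^j$. The defining condition $\hphi_\eta|_{\hGamma_{K_F}}=0$ of $X^{(0)}$ forces $\psi_1=0$, consistent with \autoref{lem:X0_decomposition}. The heart of the matter is the sharp scaling estimate
\[
\norm{\psi_j}_{L^2(\hGamma_{K_F})}\lessim h^{-(j-1)}\norm{\psi_0'}_{L^2(\hGamma_{K_F})},\qquad 2\le j\le m,
\]
from which \eqref{eqn:phi_01_phi_00_bound} follows by a direct term-by-term integration in $\eta$.

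To obtain a recursion for the $\psi_j$, I would use the explicit Frenet form of $\L$ derived in \cite{adjeridHighOrderGeometry2024}, which has the structure $\L(\hphi)=\hphi_{\eta\eta}+a_1\hphi_\eta+a_2\hphi_{\xi\xi}+a_3\hphi_\xi$ with coefficients $a_1,a_2,a_3$ built from $\norm{\g'}$ and the curvature $\kappa$. By \eqref{eqn:norm_g_assumption} and $h\kappa_\Gamma\le\frac12$, these coefficients together with all their $\eta$- and $\xi$-derivatives evaluated at $\eta=0$ are $O(1)$, and $a_2|_{\eta=0}\simeq 1$. Differentiating $j$ times in $\eta$ and setting $\eta=0$ isolates the top term $(j+2)!\,\psi_{j+2}$ coming from $\p_\eta^j\hphi_{\eta\eta}$, while every remaining contribution is a bounded-coefficient combination of $\psi_{j+1}$ and of $\{\psi_k',\psi_k''\}_{k\le j}$. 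Since the condition in \eqref{eqn:def_X0} says exactly that $\p_{\eta^j}\L(\hphi^{(0)})|_{\eta=0}$ is $L^2(\hGamma_{K_F})$-orthogonal to $\P^m$, and $\psi_{j+2}\in\P^m$, applying the orthogonal projection $\Pi_m$ onto $\P^m(\hGamma_{K_F})$ gives
\[
(j+2)!\,\psi_{j+2}=-\Pi_m\bigl[\text{bounded-coefficient combination of }\psi_{j+1},\ \{\psi_k',\psi_k''\}_{k\le j}\bigr].
\]
The scaling estimate then follows by induction on $j$, using that $\Pi_m$ is an $L^2$-contraction and the polynomial inverse inequality $\norm{\psi'}_{L^2(\hGamma_{K_F})}\lessim h^{-1}\norm{\psi}_{L^2(\hGamma_{K_F})}$ on the interval $\hGamma_{K_F}$ of length $\simeq h$. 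The base case $j=2$ reads $2\psi_2=-\Pi_m(a_2\psi_0''+a_3\psi_0')|_{\eta=0}$, so $\norm{\psi_2}\lessim\norm{\psi_0''}+\norm{\psi_0'}\lessim h^{-1}\norm{\psi_0'}$; in the inductive step the dominant contribution is $\psi_j''$, which gives $h^{-(j+1)}\norm{\psi_0'}$ and matches the target rate, whereas the $\psi_{j+1}$ term only gives the smaller $h^{-j}$.

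The assembly is then routine: since $\int_{-h}^h\eta^{2p}\,d\eta\simeq h^{2p+1}$, each monomial block satisfies $\norm{\psi_j\eta^{j-1}}_{L^2(\hK_F)}^2\simeq h^{2j-1}\norm{\psi_j}_{L^2(\hGamma_{K_F})}^2\lessim h\,\norm{\psi_0'}_{L^2(\hGamma_{K_F})}^2$ for $\hphi^{(0,1)}_\eta$, and $\norm{\psi_j'\eta^j}_{L^2(\hK_F)}^2\simeq h^{2j+1}\norm{\psi_j'}_{L^2(\hGamma_{K_F})}^2\lessim h\,\norm{\psi_0'}_{L^2(\hGamma_{K_F})}^2$ for $\hphi^{(0,1)}_\xi$ after one more use of the inverse inequality; summing the finitely many blocks yields \eqref{eqn:phi_01_phi_00_bound}. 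Estimate \eqref{eqn:phi_0_phi_00_bound} is then immediate from $\hphi^{(0)}=\psi_0+\hphi^{(0,1)}$ together with $\norm{\psi_0'}_{L^2(\hK_F)}\simeq h^{1/2}\norm{\psi_0'}_{L^2(\hGamma_{K_F})}$.

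The main obstacle is tracking the sharp power of $h$ through the recursion: a careless estimate loses a factor $h^{-1}$ per level and produces $h^{-1/2}$ instead of $h^{1/2}$ in the final bound. The two points that must be checked carefully are that the top term $(j+2)!\,\psi_{j+2}$ genuinely survives the weak projection condition (it does, precisely because $\psi_{j+2}$ itself lies in $\P^m$), and that the geometric coefficients of $\L$ and their derivatives remain $O(1)$ so that they never degrade the scaling. The bookkeeping of exactly which $\psi_k,\psi_k',\psi_k''$ appear at each level, and the verification that $\psi_j''$ is the unique term realizing the extremal rate $h^{-(j+1)}$, is the delicate part of the argument.
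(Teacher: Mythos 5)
Your proof is correct and follows essentially the same route as the paper: your projection identity $(j+2)!\,\psi_{j+2}=-\Pi_m[\cdots]$ is exactly the paper's choice of test function $v=\p_{\eta^{j+2}}\hphi^{(0,1)}(0,\cdot)$ in \eqref{eqn:L_orthogonality} combined with \eqref{eqn:L_expanded}, your scaling claim $\norm{\psi_j}\lessim h^{-(j-1)}\norm{\psi_0'}$ is the paper's induction hypothesis \eqref{eqn:induction_conclusion} restated for the Taylor coefficients, and the final term-by-term integration in $\eta$ matches the paper's assembly (the paper bounds $\norm{\hphi^{(0,1)}}_{L^2(\hK_F)}$ first and then applies one inverse inequality, whereas you estimate the gradient blocks directly, a purely cosmetic difference). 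The only slight imprecision is that the remaining contributions in your recursion also contain the undifferentiated coefficients $\{\psi_k\}_{2\le k\le j}$ (from $\eta$-derivatives of the first-order coefficient hitting $\hphi_\eta$), but these scale as $h^{-(k-1)}\norm{\psi_0'}$ and are dominated by the $\psi_j''$ term, so the induction is unaffected.
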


\begin{proof}
    Recall that by definition of $X^{(0)}$, we have 
    \begin{equation}\int_{\hGamma_{K_F}}
   \frac{\p^j \L(\hphi^{(0)})}{\p\eta^j} v = 0,\qquad \forall v\in \P^{m}(\hGamma_{K_F}),\quad j=0,1,\dots,m-2.
\label{eqn:L_orthogonality}    
\end{equation}
In \cite{adjeridHighOrderGeometry2024}, we have shown that the extended jump conditions in Frenet coordinates can be expressed in a form similar to the one-dimensional IFE jump conditions discussed in \cite{adjeridUnifiedImmersedFinite2024}
    \begin{align}\frac{\p^j \L(\hphi^{(0)})}{\p\eta^j}(0,\xi)=&
        \frac{\p^{j+2} \hphi^{(0)}}{\p\eta^{j+2}}(0,\xi)\notag
        \\ &+ 
        \sum_{l=0}^{j}
        \gamma_{0,j-l}(\xi) \frac{\p^{l} \hphi_{\xi\xi}^{(0)}}{\p\eta^l}(0,\xi) +
        \gamma_{1,j-l}(\xi) \frac{\p^{l+1} \hphi^{(0)}}{\p\eta^{l+1}}(0,\xi) + 
        \gamma_{2,j-l}(\xi) \frac{\p^{l} \hphi_{\xi}^{(0)}}{\p\eta^l}(0,\xi),\label{eqn:L_expanded}
        \end{align}
        where $\norm{\gamma_{i,l}}_{C^{0}(\hGamma_{K_F})}\lesssim 1$ for all $0\le i\le 2$ and $0\le l\le j$ \cite{adjeridHighOrderGeometry2024}.  Since $\hphi^{(0,0)}$ depends only on $\xi$, we can write $\p_{\eta^j} \L(\hphi^{(0)})$ as 
\begin{equation}
    \frac{\p^j \L(\hphi^{(0,0)})}{\p\eta^j}(0,\xi)=
   \gamma_{0,j}(\xi)\hphi^{(0,0)}_{\xi\xi}(\xi)+
    \gamma_{2,j}(\xi)\hphi^{(0,0)}_{\xi}(\xi). 
    \label{eqn:L_hphi_decomp_0_1}
\end{equation}
Now, we choose $v(\xi)=\p_{\eta^{j+2}} \hphi^{(0,1)}(0,\xi)$ in \eqref{eqn:L_orthogonality} and use \eqref{eqn:L_expanded} to obtain the following equality

{\scriptsize
\begin{equation} 
    \norm{\frac{\p^{j+2}\hphi^{(0,1)}}{\p\eta^{j+2}}}^2_{L^2(\hGamma_{K_F})}
    =-\left(\frac{\p^{j+2}\hphi^{(0,1)}}{\p\eta^{j+2}},\frac{\p^j \L(\hphi^{(0,0)})}{\p\eta^j}
    +
    \sum_{l=0}^{j}\left(
\gamma_{0,j-l} \frac{\p^{l} \hphi_{\xi\xi}^{(0,1)}}{\p\eta^l} +
\gamma_{1,j-l} \frac{\p^{l+1} \hphi^{(0,1)}}{\p\eta^{l+1}} + 
\gamma_{2,j-l} \frac{\p^{l} \hphi_{\xi}^{(0,1)}}{\p\eta^l}\right)\right)_{L^2(\hGamma_{K_F})}.
\notag 
\end{equation}
}
Now, we apply Cauchy-Schwarz inequality, the 1D inverse inequality, the triangle inequality and $|\gamma_{i,l}|\lessim 1$ to obtain
\begin{align}
   \norm{\frac{\p^{j+2}\hphi^{(0,1)}}{\p\eta^{j+2}}}_{L^2(\hGamma_{K_F})}
   &\lessim \norm{\hphi_{\xi}^{(0,0)}}_{L^2(\hGamma_{K_F})}
   +\norm{\hphi_{\xi\xi}^{(0,0)}}_{L^2(\hGamma_{K_F})}
    + \norm{\frac{\p^{j+1}\hphi^{(0,1)}}{\p\eta^{j+1}}}_{L^2(\hGamma_{K_F})}
   + h^{-2}\sum_{l=0}^{j}
\norm{\frac{\p^{l} \hphi^{(0,1)}}{\p\eta^l}}_{L^2(\hGamma_{K_F})} 
 \notag 
\\ 
&\lesssim h^{-1}\norm{\hphi^{(0,0)}_{\xi}}_{L^2(\hGamma_{K_F})}
+ \norm{\frac{\p^{j+1}\hphi^{(0,1)}}{\p\eta^{j+1}}}_{L^2(\hGamma_{K_F})}
+ h^{-2}\sum_{l=0}^{j}
\norm{\frac{\p^{l} \hphi^{(0,1)}}{\p\eta^l}}_{L^2(\hGamma_{K_F})}. 
\label{eqn:phi_0_1_jp2_second_bound}
\end{align}
At this point, We use strong induction on $l$, where $0 \leq l \leq m$, to show that
\begin{equation}\norm{\frac{\p^{l}\hphi^{(0,1)}}{\p\eta^{l}}}_{L^2(\hGamma_{K_F})}\lesssim
    h^{1-l}\norm{\hphi^{(0,0)}_{\xi}}_{L^2(\hGamma_{K_F})}.
    \label{eqn:induction_conclusion}
\end{equation}
The base cases $l=0$ and $l=1$ are straightforward since $\hphi^{(0,1)}(0,\xi)=\hphi_{\eta}^{(0,1)}(0,\xi)=0$ and the induction step follows immediately from \eqref{eqn:phi_0_1_jp2_second_bound}. Since $\hphi^{(0,1)}$ is an $m$-th degree polynomial in $\eta$, we can write it as
$$\hphi^{(0,1)}(\eta,\xi) = \sum_{l=0}^{m} \frac{\eta^l}{l!} \frac{\p^l \hphi^{(0,1)}}{\p\eta^l}(0,\xi).$$
The $L^2$ norm over $\hK_F=[-h,h]\times \hGamma_{\hK_F}$ with \eqref{eqn:induction_conclusion} yields  
\begin{align}
    \norm{\hphi^{(0,1)}}_{L^2(\hK_F)} \lessim \sum_{l=0}^{m} h^{l+1/2} \norm{\frac{\p^l \hphi^{(0,1)}}{\p\eta^l}}_{L^2(\hGamma_{\hK_F})}  \lesssim 
    h^{3/2} \norm{\hphi^{(0,0)}_{\xi}}_{L^2(\hGamma_{K_F})} .
    \notag
     \end{align}
     We apply the classical inverse inequality on $Q^m(\hK_F)$ to the previous estimate
     \begin{equation}\norm{\hnabla \hphi^{(0,1)}}_{L^2(\hK_F)}\lessim 
     h^{-1}\norm{\hphi^{(0,1)}}_{L^2(\hK_F)}
     \lesssim h^{1/2}\norm{\hphi^{(0,0)}_{\xi}}_{L^2(\hGamma_{K_F})},
     \label{eqn:phi_1_bounded_by_phi_0}
    \end{equation}
    which proves \eqref{eqn:phi_01_phi_00_bound}. Lastly, to derive \eqref{eqn:phi_0_phi_00_bound}, we use the triangle inequality  and \eqref{eqn:phi_1_bounded_by_phi_0} to obtain 
     \begin{align}
        \norm{\hnabla \hphi^{(0)}}_{L^2(\hK_F)}&\le 
        \norm{\hnabla \hphi^{(0,0)}}_{L^2(\hK_F)}+
        \norm{\hnabla \hphi^{(0,1)}}_{L^2(\hK_F)}\\ 
        & \simeq h^{1/2}\norm{\hnabla \hphi^{(0,0)}}_{L^2(\hGamma_{K_F})}+
        \norm{\hnabla \hphi^{(0,1)}}_{L^2(\hK_F)}
        \lesssim h^{1/2}\norm{\hnabla \hphi^{(0,0)}}_{L^2(\hGamma_{K_F})},
        \notag
     \end{align}
where the last line follows from the fact that $\hphi^{(0,0)}$ is a function of $\xi$ solely, and from \eqref{eqn:phi_1_bounded_by_phi_0}.

\end{proof}

The previous lemmas show that the polynomials in $X^{(0)}$ are characterized and controlled by their traces on $\hGamma_{K_F}$. On the other hand, the polynomials in $X^{(1)}$ vanish on $\hGamma_{K_F}$. This observation allows us to bound the norm of a polynomial in $X^{(0)}\oplus X^{(1)}$ by the norm of its projections onto $X^{(0)}$ and $X^{(1)}$. 

\begin{lemma}\label{lem:norm_equiv_X0_X1_hK_s} 
    Let $s\in \{+,-\}$, then 
     \begin{equation}
    \norm{\hnabla \hphi^{(0)}}_{L^2(\hK_F^s)}+\norm{\hnabla \hphi^{(1)}}_{L^2(\hK_F^s)}\lesssim 
   \norm{ \hnabla \hphi^{(0)}+\hnabla \hphi^{(1)}}_{L^2(\hK_F^s)},\qquad \forall \hphi^{(0)}\in X^{(0)},\ \forall \hphi^{(1)} \in X^{(1)}.
   \label{eqn:norm_equiv_X0_X1_V_hK_s}
     \end{equation}
 \end{lemma}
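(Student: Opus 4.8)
The plan is to exploit the fact that on the interface segment $\hGamma_{K_F}=\{0\}\times[\xi_0,\xi_1]$ the $X^{(1)}$ components are invisible, so that the trace of the sum $\hphi^{(0)}+\hphi^{(1)}$ collapses to the one-dimensional polynomial $\hphi^{(0,0)}$. Concretely, I would first invoke \autoref{lem:X0_decomposition} to write $\hphi^{(0)}=\hphi^{(0,0)}(\xi)+\hphi^{(0,1)}(\eta,\xi)$ with $\hphi^{(0,1)}\in X^{(1)}$. Since both $\hphi^{(0,1)}$ and $\hphi^{(1)}$ lie in $X^{(1)}$, whose spanning monomials $\eta^j\xi^i$ all have $j\ge 1$, they vanish identically on $\{\eta=0\}$; hence $(\hphi^{(0)}+\hphi^{(1)})(0,\xi)=\hphi^{(0,0)}(\xi)$ and, differentiating along the segment, $\p_\xi(\hphi^{(0)}+\hphi^{(1)})(0,\xi)=\hphi^{(0,0)}_\xi(\xi)$. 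This identity is the engine of the argument: it expresses the otherwise inaccessible quantity $\hphi^{(0,0)}_\xi$ purely in terms of the trace of the given sum.

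Next I would bound $\norm{\hnabla\hphi^{(0)}}_{L^2(\hK_F^s)}$. Since $\hK_F^s\subset\hK_F$, estimate \eqref{eqn:phi_0_phi_00_bound} of \autoref{lem:phi_01_phi_00_bound} gives $\norm{\hnabla\hphi^{(0)}}_{L^2(\hK_F^s)}\le\norm{\hnabla\hphi^{(0)}}_{L^2(\hK_F)}\lessim h^{1/2}\norm{\hphi^{(0,0)}_\xi}_{L^2(\hGamma_{K_F})}$. Using the trace identity from the previous step, $\norm{\hphi^{(0,0)}_\xi}_{L^2(\hGamma_{K_F})}=\norm{\p_\xi(\hphi^{(0)}+\hphi^{(1)})}_{L^2(\hGamma_{K_F})}$, and since $\hGamma_{K_F}$ is an edge of the rectangle $\hK_F^s$ (of thickness $\simeq h$ in the $\eta$ direction and of height $\xi_1-\xi_0\simeq h$ by \autoref{lem:xi1_minus_xi0}), the classical scaled trace inequality for polynomials \cite{ciarletFiniteElementMethod2002} yields $\norm{\p_\xi(\hphi^{(0)}+\hphi^{(1)})}_{L^2(\hGamma_{K_F})}\lessim h^{-1/2}\norm{\p_\xi(\hphi^{(0)}+\hphi^{(1)})}_{L^2(\hK_F^s)}\le h^{-1/2}\norm{\hnabla(\hphi^{(0)}+\hphi^{(1)})}_{L^2(\hK_F^s)}$. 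Chaining these three estimates, the factors $h^{1/2}$ and $h^{-1/2}$ cancel and I obtain $\norm{\hnabla\hphi^{(0)}}_{L^2(\hK_F^s)}\lessim\norm{\hnabla(\hphi^{(0)}+\hphi^{(1)})}_{L^2(\hK_F^s)}$.

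Finally, the bound on $\hphi^{(1)}$ follows from the triangle inequality, $\norm{\hnabla\hphi^{(1)}}_{L^2(\hK_F^s)}\le\norm{\hnabla(\hphi^{(0)}+\hphi^{(1)})}_{L^2(\hK_F^s)}+\norm{\hnabla\hphi^{(0)}}_{L^2(\hK_F^s)}$, and adding the two bounds gives \eqref{eqn:norm_equiv_X0_X1_V_hK_s}. I expect the only delicate point to be the bookkeeping of the powers of $h$: the estimate is scale-invariant precisely because the $h^{1/2}$ gain from \autoref{lem:phi_01_phi_00_bound} is exactly compensated by the $h^{-1/2}$ loss in the trace inequality, so one must confirm that both dimensions of $\hK_F^s$ are genuinely $\simeq h$ and that the polynomial degree entering the trace inequality remains bounded by $m$ (which it does, as $\p_\xi(\hphi^{(0)}+\hphi^{(1)})\in Q^m(\mathbb{R}^2)$). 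No further obstacle arises, since both $X^{(0)}$ and $X^{(1)}$ are controlled through their traces on $\hGamma_{K_F}$, and the argument is insensitive to the choice of $s\in\{+,-\}$.
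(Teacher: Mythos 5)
Your proposal is correct and follows essentially the same route as the paper's own proof: both rest on the observation that $\xi$-derivatives of $X^{(1)}$ functions (including $\hphi^{(0,1)}$ from \autoref{lem:X0_decomposition}) vanish on $\hGamma_{K_F}$, so the trace of $\p_\xi(\hphi^{(0)}+\hphi^{(1)})$ collapses to $\hphi^{(0,0)}_\xi$, which is then controlled via the scaled polynomial trace inequality on the shape-regular rectangle $\hK_F^s$ (using \autoref{lem:xi1_minus_xi0}) and fed into \autoref{lem:phi_01_phi_00_bound}, with the triangle inequality finishing the bound on $\hphi^{(1)}$. The only difference is presentational: you chain the inequalities from $\norm{\hnabla\hphi^{(0)}}_{L^2(\hK_F^s)}$ upward to the right-hand side, whereas the paper reads the same chain in the opposite direction, starting from the right-hand side.
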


 \begin{proof}
    Recall that $\hK_F^+=[0,h]\times [\xi_0,\xi_1]$ and  $\hK_F^-=[-h,0]\times [\xi_0,\xi_1]$, then by the trace inequality for polynomials and \autoref{lem:xi1_minus_xi0}, we have 

    \begin{equation}
        \norm{ \hnabla \hphi^{(0)}+\hnabla \hphi^{(1)}}_{L^2(\hK_F^s)} \ge 
        \norm{  \hphi^{(0)}_{\xi}+ \hphi^{(1)}_{\xi}}_{L^2(\hK_F^s)}\gtrsim   h^{1/2}\norm{  \hphi^{(0)}_{\xi}+ \hphi^{(1)}_{\xi}}_{L^2(\hGamma_{K_F})}\label{eqn:phi_xi_boundary}
    \end{equation}
    From \eqref{eqn:def_X1}, we have $\hphi^{(1)}_{\xi}=0$ on $\hGamma_{K_F}$, and from \autoref{lem:X0_decomposition}, we have $\hphi^{(0)}_{\xi}=\hphi^{(0,0)}_{\xi}$ on $\hGamma_{K_F}$. Hence, \eqref{eqn:phi_xi_boundary} combined with \autoref{lem:phi_01_phi_00_bound} yields 
    \begin{equation}
        \norm{ \hnabla \hphi^{(0)}+\hnabla \hphi^{(1)}}_{L^2(\hK_F^s)} \gtrsim  h^{1/2}\norm{  \hphi^{(0,0)}_{\xi}}_{L^2(\hGamma_{K_F})}
        \gtrsim   \norm{\nabla  \hphi^{(0)}}_{L^2(\hK_F)}.
        \label{eqn:phi_0_bounded} 
    \end{equation}
    Next, we use the triangle inequality and \eqref{eqn:phi_0_bounded} to obtain 
    \begin{align}
        \norm{\hnabla \hphi^{(0)}}_{L^2(\hK_F^s)}+\norm{\hnabla \hphi^{(1)}}_{L^2(\hK_F^s)}&\le  2
        \norm{ \hnabla \hphi^{(0)}}_{L^2(\hK_F^s)} + 
       \norm{ \hnabla \hphi^{(0)}+\hnabla \hphi^{(1)}}_{L^2(\hK_F^s)}  \notag \\ 
      &\lessim 
      \norm{ \hnabla \hphi^{(0)}+\hnabla \hphi^{(1)}}_{L^2(\hK_F^s)}.
         \end{align}

 \end{proof}

    

The previous lemma is extremely important since it will allow us to bound norms of IFE functions in terms of norms of polynomials, which are easier to handle. However, the lemma deals with norms over the Frenet fictitious sub-elements $\hK_F^{\pm}$ instead of the Frenet interface element $\hK$ or its sub-elements $\hK^{\pm}$. Ideally, one would try to prove \autoref{lem:norm_equiv_X0_X1_hK_s} with norms over $\hK^s$ directly instead of $\hK_F^s$. However, this approach might be greatly complicated. Instead, we adopt an indirect approach that relies on the following two lemmas.

First, we show that on a Frenet interface element $\hK$, there exists a triangle completely contained in the closure of $\hK^+$ or $\hK^-$ with side-lengths comparable to $h$, which leads to the following lemma.

\begin{lemma}\label{lem:stable_extensions_poly}
    Let $K$ be an interface element, then there exists $s^*\in \{-,+\}$ such that 

    \begin{equation}
        \norm{p}_{L^2(\hK_F^{s^*})} \lesssim
        \norm{p}_{L^2(\hK^{s^*})}, \qquad \forall p\in Q^m(\hK_F^{s^*}),
        \label{eqn:type_plus_minus_equivalence_poly}
    \end{equation}  
    whenever $\operatorname{diam}(K)<h_4$ where $h_4>0$ is independent of the mesh size and the relative position of the interface to $K$. 
\end{lemma}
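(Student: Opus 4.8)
The plan is to reduce the stated polynomial inequality to a purely geometric fact: that on one of the two sides $s^\ast\in\{+,-\}$ the region $\hK^{s^\ast}$ contains a genuine (straight-edged) triangle $\mathcal{T}$ which is shape regular with $\diam(\mathcal{T})\simeq h$. Once such a $\mathcal{T}\subset\hK^{s^\ast}$ is available, \eqref{eqn:type_plus_minus_equivalence_poly} follows from a finite-dimensional norm-equivalence argument for polynomials on comparable domains, so essentially all the work is in producing $\mathcal{T}$.

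For the geometric core I would work on the parallelogram $\cK=\cR(K)$, which is shape regular with $\diam(\cK)\simeq h$ by \eqref{eqn:jacobian_cP_cR_good}. The axis $\{0\}\times\mathbb{R}$ splits $\cK$ into two convex polygons $\cK\cap\{\pm\eta>0\}$, and I would take $s^\ast$ to be the side of larger area, which is then $\gtrsim|\cK|\simeq h^2$. Since a parallelogram cut by a line has at most five vertices, triangulating this larger piece from one vertex produces at most three triangles, one of which, say $\mathcal{T}_0$, has area $\gtrsim h^2$; being contained in a set of diameter $\lesssim h$, its shortest height is $\gtrsim h$, so $\mathcal{T}_0$ is shape regular with $\diam(\mathcal{T}_0)\simeq h$. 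Shrinking $\mathcal{T}_0$ by a fixed factor toward its centroid yields a shape-regular triangle $\mathcal{T}$ with $\diam(\mathcal{T})\simeq h$ and $\operatorname{dist}(\mathcal{T},\partial\cK)\gtrsim h$, $\operatorname{dist}(\mathcal{T},\{\eta=0\})\gtrsim h$, so $\mathcal{T}\subset\cK\cap\{s^\ast\eta>0\}$ with margin $\gtrsim h$. It remains to transfer $\mathcal{T}$ into $\hK$. Recalling from \autoref{lem:Tx_minus_x} that $T=\cR\circ P$ is a homeomorphism of $\hK$ onto $\cK$ with $\norm{T-\id}_{C^0}\lesssim h^2$, its inverse $S=T^{-1}:\cK\to\hK$ also satisfies $\norm{S-\id}_{C^0}\lesssim h^2$; a short perturbation-of-the-identity (degree) argument then shows that every point of $\cK$ lying at distance $\gtrsim h^2$ from $\partial\cK$ belongs to $\hK$. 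Since $\mathcal{T}$ sits at distance $\gtrsim h\gg h^2$ from $\partial\cK$, this gives $\mathcal{T}\subset\hK$ once $h<h_4$; combined with $\mathcal{T}\subset\{s^\ast\eta>0\}$ and $\hK^{s^\ast}=\hK\cap\{s^\ast\eta>0\}$, we obtain $\mathcal{T}\subset\hK^{s^\ast}$.

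With $\mathcal{T}\subset\hK^{s^\ast}$ in hand, I would close the estimate by the chain
\begin{equation}
\norm{p}_{L^2(\hK_F^{s^\ast})}\lesssim h\,\norm{p}_{C^0(\hK_F^{s^\ast})}\lesssim h\,\norm{p}_{C^0(\mathcal{T})}\lesssim \norm{p}_{L^2(\mathcal{T})}\le \norm{p}_{L^2(\hK^{s^\ast})}.\notag
\end{equation}
The first inequality uses $|\hK_F^{s^\ast}|\simeq h^2$; the third is the standard inverse estimate $\norm{p}_{C^0(\mathcal{T})}\lesssim h^{-1}\norm{p}_{L^2(\mathcal{T})}$ on a shape-regular triangle of size $\simeq h$; and the last is just $\mathcal{T}\subset\hK^{s^\ast}$. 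The only genuinely new ingredient is the middle inequality $\norm{p}_{C^0(\hK_F^{s^\ast})}\lesssim\norm{p}_{C^0(\mathcal{T})}$: after rescaling by $h^{-1}$ both $\hK_F^{s^\ast}$ and $\mathcal{T}$ become sets of diameter $\simeq1$ with $\mathcal{T}$ containing a fixed-size disk, and since $\norm{\cdot}_{C^0(\mathcal{T})}$ is a norm on the finite-dimensional space $Q^m$, all such norms are equivalent with a constant depending only on $m$ and the shape-regularity and diameter bounds, exactly the norm-equivalence property already invoked in \eqref{eqn:homothetic_norm_invariance}.

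I expect the main obstacle to be the geometric step rather than the analytic one: one must guarantee that the inscribed triangle is simultaneously fat (to use the inverse estimate), of diameter $\simeq h$, and safely inside $\hK^{s^\ast}$ despite (i) the cut by the axis possibly being very unbalanced and (ii) $\hK$ having curved edges that only approximate $\cK$ to order $h^2$. Choosing $s^\ast$ as the larger piece resolves (i), while the separation of scales $h^2\ll h$ between the Frenet/affine mismatch and the size of the triangle resolves (ii); the cleanest way to make $\mathcal{T}\subset\hK^{s^\ast}$ rigorous is the perturbation-of-the-identity argument for $S=T^{-1}$ indicated above.
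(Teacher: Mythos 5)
Your proof is correct, and it reaches the same structural endpoint as the paper's---inscribe a shape-regular straight triangle of diameter $\simeq h$ inside one of $\hK^{\pm}$, then conclude by finite-dimensional polynomial norm equivalence---but the way you get there is genuinely different at both key steps. The paper chooses $s^*$ and the triangle through a case analysis on how the four vertices $\hA_i$ of $\hK$ are split by the line $\{\eta=0\}$ (a $3$--$1$ split, or a $2$--$2$ split using the intersection point $D$), forms the curved triangle $\hat M$ with those vertices, inscribes into it the half-size homothetic copy $\check M_0$ of the straight triangle $\check M$ with vertices $T(\hA_i)$ (using \autoref{lem:Tx_minus_x} and the inradius bound $\gtrsim h$), and then compares with $\hK_F^{s^*}$ entirely in $L^2$ via two homothety norm equivalences, $\check M_0 \to \check M \to \check M_1 \supset \hK_F^{s^*}$. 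You instead choose $s^*$ by an area pigeonhole on the parallelogram $\cK$ cut by $\{\eta=0\}$ (no case analysis), build a straight triangle directly inside the larger piece with margin $\gtrsim h$ from both $\partial\cK$ and the cutting line, push it into $\hK$ by a perturbation-of-the-identity/degree argument applied to $T^{-1}$, and close with a $C^0$ chain (sup-norm equivalence plus an inverse estimate) instead of $L^2$ homothety. Each route buys something. Your margin-$\gtrsim h$ placement makes the containment of the triangle in $\hK^{s^*}$ airtight; by contrast, the paper's first inequality $\norm{p}_{L^2(\hK^{s^*})}\ge\norm{p}_{L^2(\hat M)}$ tacitly assumes the curved triangle does not leak across $\{\eta=0\}$, which can in fact fail by an $O(h^2)$ sliver and strictly needs exactly the kind of buffer you build in (the paper's $\check M_0$ does stay on the correct side, so the repair is cosmetic). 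Conversely, the paper never leaves $L^2$, needs no degree theory, and its triangles have vertices at images of element vertices, so their shape regularity is inherited directly from that of $K$. Finally, note that you can avoid degree theory altogether: if $y\in\cK$ with $\operatorname{dist}(y,\partial\cK)\gtrsim h^2$, then $\cP(y)\in K$ with $\operatorname{dist}(\cP(y),\partial K)\gtrsim h^2$ by \eqref{eqn:jacobian_cP_cR_good}, while $\norm{P(y)-\cP(y)}\lesssim h^2$ by \eqref{eqn:Phx_minus_Phx}; hence $P(y)\in K$, i.e., $y\in R(K)=\hK$, which is precisely the containment your argument needs.
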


\begin{proof} Let $A_1, A_2, A_3$ and $A_4$ be the vertices of $K$ and let $\hat{A}_i=R(A_i)$ for $1\le i\le 4$. We need to consider two cases: 
\begin{itemize}
    \item Case 1: Three of the vertices $\hat{A}_i, ~1 \leq i \leq 4$ are in the closure of $\hK^+$ or $\hK^{-}$. To be specific and without loss of generality, we assume that $\hA_1,\hA_2,\hA_3\in \hK^{s^*}$ for an $s^*\in \{+,-\}$ as illustrated in \autoref{subfig:case_1_fig}. 
    
    Now, let $\check{A}_i=\cR(A_i)=T(\hat{A_i})$ for $i=1,2,3$, then by \autoref{lem:Tx_minus_x}, we have $\norm{\hat{A}_i-\check{A}_i}\lesssim h^2$.  Furthermore, the curved triangle $\hat{M}=\cdelta \hA_1\hA_2\hA_3$ is contained in $\check{M}+B(\mathbf{0},C_1h^2)$ where $\check{M}=\triangle \check{A}_1\check{A}_2\check{A}_3$ for some $C_1>0$ independent of mesh size and the relative position of the interface, and $\check{M}$ is also contained in $\hat{M}+B(\mathbf{0},C_1h^2)$.     On the other hand, it follows from \eqref{eqn:jacobian_cP_cR_good} that the inradius $\rho$ of $\check{M}$ satisfies $\rho\ge C_2 h$ for some $C_2$ independent of the mesh size and the relative position of the interface. Hence,    the homothetic image $\check{M}_0$ of $\check{M}$ with scaling $\frac{1}{2}$ centered at $\check{G}_{\check{M}}$, the incenter of $\check{M}$, is contained in $\hat{M}$ for $h<\frac{C_2}{4C_1}$. To see this, consider a point $\check{\x}\in \check{M}_0$, then $\norm{\check{\x}-\check{\y}}\ge \rho/2$ for any $\check{\y}\in \p \check{M}$. Therefore, given any $\hat{\z}\in \p \check{M}$ and $\check{\x}\in \check{M}_0$, we have 
    \[ \norm{\hat{\z}-\check{\x}}\ge \frac{\rho}{2} - C_1 h^2 \ge  \frac{C_2}{2}h - \frac{C_2}{4}h= \frac{C_2}{4}h>0.\notag
    \]
    Hence, $\check{M}_0\subset \hat{M}$ for $h$ small enough.

    Next, let $p\in Q^m(\hK^{s^*}_F)$, then by our observation above and the homothetic scaling argument \cite{xiaoHighorderExtendedFinite2020}, we have 
    \begin{equation}
        \norm{p}_{L^2(\hK^{s^*})}\ge 
        \norm{p}_{L^2(\hat{M})} \ge  \norm{p}_{L^2(\check{M}_0)}
        \gtrsim   \norm{p}_{L^2(\check{M})}.
        \label{eqn:Ks_star_to_check_M}
    \end{equation}
    On the other hand, consider the homothetic image $\check{M}_1$ of $\check{M}$ with scaling $2\diam(\hK_F^{s^*})/\rho \simeq 1$ and center $\check{G}_{\check{M}}$, then $\hat{K}_F^{s^*}\subset \check{M}_1$.
 Therefore, by the homothetic scaling argument and \eqref{eqn:Ks_star_to_check_M}, we  have 
    \begin{equation}
        \norm{p}_{L^2(\hK^{s^*})}\gtrsim  \norm{p}_{L^2(\check{M}_1)} \gtrsim \norm{p}_{L^2(\hat{K}_F^{s^*})}.
    \end{equation}

    \item Case 2: There are two vertices in $\hK^-$ and two vertices in $\hK^+$. Without loss of generality, we assume that $\hA_1\in \hK^{-}$ and $\hA_2\in \hK^+$ as shown in \autoref{subfig:case_2_fig}. Let $D$ be the intersection point of the line segment $\hGamma_{K_F}$ and the curved edge connecting $\hA_1$ and $\hA_2$. Again, without loss of generality, we assume that $|D\hA_2|\ge \frac{1}{2}|\hA_1\hA_2|$, then $|D\hA_2|\simeq h$. Therefore, by following the same steps in the previous case, we obtain 
    \begin{equation}
        \norm{p}_{L^2(\hK^+)} \ge 
        \norm{p}_{L^2(\cdelta \hA_2\hA_3D)}\gtrsim 
        \norm{p}_{L^2(\triangle \check{A}_2\check{A}_3 \check{D})}
        \gtrsim
        \norm{p}_{L^2(\hK_F^+)}       
    \end{equation}       
\end{itemize}

\begin{figure}[htbp]
    \center
    \begin{subfigure}{.5\textwidth}
    \center
    \includegraphics[scale=.7]{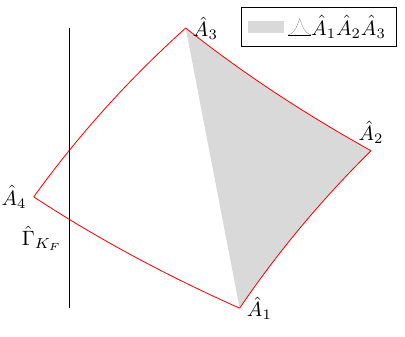}
    \caption{Case 1}

    \label{subfig:case_1_fig}

\end{subfigure}\begin{subfigure}{.5\textwidth}
    \center
    \includegraphics[scale=.7]{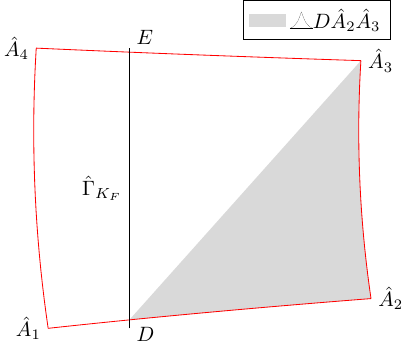}
    \caption{Case 2}
    \label{subfig:case_2_fig}
\end{subfigure}
\caption{An illustration of the curved triangles described in the proof of \autoref{lem:stable_extensions_poly}}
\end{figure}

\end{proof}

{From now on in this section, we assume that the mesh $\T_h$ is sufficiently fine such that the results 
from the previous lemmas are all valid.} Next, we use \autoref{lem:stable_extensions_poly} to the Frenet IFE functions to obtain the following lemma. 
\begin{lemma}\label{lem:stable_extensions}
    Let $K$ be an interface element, then we have 
    \begin{equation}
        \norm{\hbeta \hnabla \hphi}_{L^2(\hK_F)} \lesssim
        \norm{\hbeta\hnabla \hphi}_{L^2(\hK)}, \qquad \forall \hphi\in \hV^m_{\beta}(\hK_F).
        \label{eqn:type_minus_equivalence}
    \end{equation}
    or 
    \begin{equation}
        \norm{\hnabla \hphi}_{L^2(\hK_F)} \lesssim
        \norm{\hnabla \hphi}_{L^2(\hK)}, \qquad \forall \hphi\in \hV^m_{\beta}(\hK_F).
        \label{eqn:type_plus_equivalence}
    \end{equation}
\end{lemma}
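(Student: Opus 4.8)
The plan is to lift the two inequalities from the polynomial estimates of \autoref{subsec:Preliminary_inequalities} by way of the decomposition in \autoref{lem:hV_decomposition}. Writing $\hphi = \hphi^{(0)} + \frac{1}{\hbeta}\hphi^{(1)}$ with $\hphi^{(0)}\in X^{(0)}$ and $\hphi^{(1)}\in X^{(1)}$, and using that $\hbeta$ is piecewise constant, on each sub-element $\hK^s$ ($s\in\{+,-\}$) the gradients $\hnabla\hphi$ and $\hbeta\hnabla\hphi$ agree respectively with the gradients of the \emph{polynomials} $r^s := \hphi^{(0)} + \frac{1}{\beta^s}\hphi^{(1)}$ and $q^s := \beta^s\hphi^{(0)} + \hphi^{(1)}$. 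Since $\hphi^{(0)},\hphi^{(1)}$ and all their derivatives lie in $Q^m$, every quantity appearing below is a genuine polynomial, so the polynomial norm equivalences and \autoref{lem:stable_extensions_poly} apply to them.

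First I would fix the side $s^*$ furnished by \autoref{lem:stable_extensions_poly}; this side decides which of the two conclusions we reach, the weighted estimate \eqref{eqn:type_minus_equivalence} when $s^*=+$ and the unweighted estimate \eqref{eqn:type_plus_equivalence} when $s^*=-$. Take the case $s^*=+$. I would split $\norm{\hbeta\hnabla\hphi}_{L^2(\hK_F)}$ into its contributions $\norm{\hnabla q^+}_{L^2(\hK_F^+)}$ and $\norm{\hnabla q^-}_{L^2(\hK_F^-)}$, apply the triangle inequality to each, and then move the polynomial pieces $\hnabla\hphi^{(0)},\hnabla\hphi^{(1)}$ off the wrong half onto $\hK_F^{+}$ using the polynomial norm equivalence $\norm{p}_{L^2(\hK_F^-)}\lesssim\norm{p}_{L^2(\hK_F)}\lesssim\norm{p}_{L^2(\hK_F^{+})}$. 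Because $\beta^-\le\beta^+$, both contributions are then dominated by
$$\beta^+\norm{\hnabla\hphi^{(0)}}_{L^2(\hK_F^{+})}+\norm{\hnabla\hphi^{(1)}}_{L^2(\hK_F^{+})}.$$

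Next I would recombine by applying \autoref{lem:norm_equiv_X0_X1_hK_s} with $\hphi^{(0)}$ rescaled by $\beta^+$, bounding the displayed sum by $\norm{\beta^+\hnabla\hphi^{(0)}+\hnabla\hphi^{(1)}}_{L^2(\hK_F^{+})}=\norm{\hnabla q^+}_{L^2(\hK_F^{+})}$. Finally \autoref{lem:stable_extensions_poly}, applied componentwise to the polynomial $\hnabla q^+$, transfers this onto $\hK^{+}$, and $\norm{\hnabla q^+}_{L^2(\hK^+)}=\norm{\hbeta\hnabla\hphi}_{L^2(\hK^+)}\le\norm{\hbeta\hnabla\hphi}_{L^2(\hK)}$ closes the chain. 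The case $s^*=-$ is run identically with the unweighted gradient: there the coefficient $\frac{1}{\beta^-}=\max_s\frac{1}{\beta^s}$ dominates, the recombination produces $\hnabla r^-$, and the transfer $\hK_F^-\to\hK^-$ yields \eqref{eqn:type_plus_equivalence}.

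The main obstacle is the asymmetry of \autoref{lem:stable_extensions_poly}, which controls only the single side $s^*$: the opposite half $\hK_F^{\bar s}$ cannot be pushed onto $\hK$ on its own. The resolution rests on combining two ingredients — polynomial norm equivalence, which transports the polynomial components $\hnabla\hphi^{(0)},\hnabla\hphi^{(1)}$ from the wrong half to the good half $\hK_F^{s^*}$, and the monotonicity $\beta^+\ge\beta^-$, which lets one weight ($\beta^+$ in the weighted case, $1/\beta^-$ in the unweighted case) dominate the weights from both halves so that the recombination collapses to a single polynomial. It is exactly this coefficient domination that forces the either/or in the statement: only the variant whose dominant weight lives on the side $s^*$ can be reassembled into one polynomial and transferred through \autoref{lem:stable_extensions_poly}.
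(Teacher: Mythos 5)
Your proof is correct and takes essentially the same route as the paper's: decompose via \autoref{lem:hV_decomposition}, dominate the piecewise-constant coefficient using $\beta^-\le\beta^+$, pass to the favorable half $\hK_F^{s^*}$ by polynomial norm equivalence, recombine with \autoref{lem:norm_equiv_X0_X1_hK_s} (applied to the rescaled pair in $X^{(0)}\times X^{(1)}$), and transfer to $\hK^{s^*}$ with \autoref{lem:stable_extensions_poly}. The only cosmetic difference is that you split $\hK_F$ into its two halves before applying the triangle inequality, whereas the paper applies it directly on $\hK_F$; the chain of lemmas, the coefficient-domination mechanism, and the resulting either/or dichotomy are identical.
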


\begin{proof}
    First, let us assume that \eqref{eqn:type_plus_minus_equivalence_poly} holds for $s^*=+$. Then, by \autoref{lem:hV_decomposition}, \autoref{lem:norm_equiv_X0_X1_hK_s}, \autoref{lem:stable_extensions_poly} with $s^*=+$, and the assumption that $\beta^- \leq \beta^+$,   we have
    \begin{align*}
    \norm{\hbeta \hnabla \hphi}_{L^2(\hK_F)}
  &= \norm{\hbeta \hnabla \hphi^{(0)} + \hnabla \hphi^{(1)}}_{L^2(\hK_F)}\notag \le \beta^+\norm{\hnabla \hphi^{(0)}}_{L^2(\hK_F)} + \norm{\hnabla \hphi^{(1)}}_{L^2(\hK_F)} \\
  & \lesssim  \beta^+\norm{\hnabla \hphi^{(0)}}_{L^2(\hK_F^+)} + \norm{\hnabla \hphi^{(1)}}_{L^2(\hK_F^+)} \lesssim \norm{\beta^+ \hnabla \hphi}_{L^2(\hK^+_F)} \\ 
  &\lesssim \norm{\beta^+ \hnabla \hphi}_{L^2(\hK^+)} \lesssim \norm{\hbeta \hnabla \hphi}_{L^2(\hK^+)} \lessim \norm{\hbeta \hnabla \hphi}_{L^2(\hK)}. \notag 
\end{align*}

On the other hand, if \eqref{eqn:type_plus_minus_equivalence_poly} holds for $s^*=-$, then, 
similarly, by \autoref{lem:hV_decomposition}, \autoref{lem:norm_equiv_X0_X1_hK_s}, \autoref{lem:stable_extensions_poly} with $s^*=-$, and the assumption that $\beta^- \leq \beta^+$, we have 
\begin{align*}
    \norm{ \hnabla \hphi}_{L^2(\hK_F)}
  &= \norm{ \hnabla \hphi^{(0)} +\frac{1}{\hbeta} \hnabla \hphi^{(1)}}_{L^2(\hK_F)} \le \norm{\hnabla \hphi^{(0)}}_{L^2(\hK_F)} +\frac{1}{\beta^-} \norm{\hnabla \hphi^{(1)}}_{L^2(\hK_F)} \\
  & \lesssim  \norm{\hnabla \hphi^{(0)}}_{L^2(\hK_F^-)} +\frac{1}{\beta^-} \norm{\hnabla \hphi^{(1)}}_{L^2(\hK_F^-)}  \lesssim  \norm{\hnabla \hphi}_{L^2(\hK_F^-)}  \lesssim\norm{\hnabla \hphi}_{L^2(\hK^-)} \lesssim \norm{\hnabla \hphi}_{L^2(\hK)}.\notag 
\end{align*}

\commentout{
        \begin{align}
            \norm{ \hnabla \hphi}_{L^2(\hK_F)}
          &= \norm{ \hnabla \hphi^{(0)} +\frac{1}{\hbeta} \hnabla \hphi^{(1)}}_{L^2(\hK_F)}\notag \\ 
          & \le \norm{\hnabla \hphi^{(0)}}_{L^2(\hK_F)} +\frac{1}{\beta^-} \norm{\hnabla \hphi^{(1)}}_{L^2(\hK_F)}\tag{Triangle inequality and $\beta^-\le \beta^+$} \\
          & \lesssim  \norm{\hnabla \hphi^{(0)}}_{L^2(\hK_F^-)} +\frac{1}{\beta^-} \norm{\hnabla \hphi^{(1)}}_{L^2(\hK_F^-)} \tag{$\norm{p}_{\hK_F^+}\simeq\norm{p}_{\hK_F}$ for polynomials $p$}  \\ 
          & \lesssim  \norm{\hnabla \hphi}_{L^2(\hK_F^-)}  \tag{Using \autoref{lem:norm_equiv_X0_X1_hK_s}} \\ 
          &\lesssim\norm{\hnabla \hphi}_{L^2(\hK^-)} 
          \tag{Using \autoref{lem:stable_extensions_poly} with $s^*=-$}
          \notag\\ 
            &\lesssim \norm{\hnabla \hphi}_{L^2(\hK)}.\notag 
        \end{align}
}

\end{proof}


At this point, we are ready to extend the results of \autoref{lem:norm_equiv_X0_X1_hK_s} to IFE functions on the Frenet element $\hK$ in the following lemma. 
\begin{lemma}\label{lem:norm_equiv_X0_X1_V} 
    For every $\hphi\in \hV^{m}_{\beta}(\hK)$ with $\hphi=\hphi^{(0)}+\hbeta^{-1}\hphi^{(1)}$ as stated in \autoref{lem:hV_decomposition}, the following estimate holds:
    \begin{equation}
    \beta^+\norm{\hnabla \hphi^{(0)}}_{L^2(\hK)}+\norm{\hnabla \hphi^{(1)}}_{L^2(\hK)}\lesssim 
   \frac{\beta^+}{\sqrt{\beta^-}}\norm{\sqrt{\hbeta} \hnabla \hphi}_{L^2(\hK)}.
   \label{eqn:norm_equiv_X0_X1_V}
    \end{equation}
     
\end{lemma}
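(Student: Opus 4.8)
The plan is to reduce the estimate on the curved Frenet interface element $\hK$ to polynomial estimates on the fictitious sub-rectangles $\hK_F^{\pm}$, where the subspaces $X^{(0)}$ and $X^{(1)}$ can be separated via \autoref{lem:norm_equiv_X0_X1_hK_s}. Before any of that, I would record the two elementary consequences of $\beta^+\ge\beta^->0$ that convert a $\hbeta$-weighted or an unweighted gradient norm into the target $\sqrt{\hbeta}$-weighted norm. Since $\hbeta\le\beta^+$ pointwise, $\norm{\hbeta\hnabla\hphi}_{L^2(\hK)}\le\sqrt{\beta^+}\,\norm{\sqrt{\hbeta}\hnabla\hphi}_{L^2(\hK)}\le\frac{\beta^+}{\sqrt{\beta^-}}\norm{\sqrt{\hbeta}\hnabla\hphi}_{L^2(\hK)}$, and since $\hbeta\ge\beta^-$, one also has $\beta^+\norm{\hnabla\hphi}_{L^2(\hK)}\le\frac{\beta^+}{\sqrt{\beta^-}}\norm{\sqrt{\hbeta}\hnabla\hphi}_{L^2(\hK)}$. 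Hence it suffices to bound the left-hand side of \eqref{eqn:norm_equiv_X0_X1_V} by either $\norm{\hbeta\hnabla\hphi}_{L^2(\hK)}$ or $\beta^+\norm{\hnabla\hphi}_{L^2(\hK)}$.

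Next I invoke \autoref{lem:stable_extensions_poly} to fix the side $s^*\in\{+,-\}$ on which polynomial norms over $\hK_F^{s^*}$ are controlled by polynomial norms over $\hK^{s^*}$, and I split into two cases. The move common to both cases is that, because $\hphi^{(0)},\hphi^{(1)}$ are polynomials, $\norm{\hnabla\hphi^{(j)}}_{L^2(\hK)}\le\norm{\hnabla\hphi^{(j)}}_{L^2(\hK_F)}\lesssim\norm{\hnabla\hphi^{(j)}}_{L^2(\hK_F^{s^*})}$, using $\hK\subset\hK_F$ together with the scale-invariant equivalence of the $L^2$ norm of a fixed-degree polynomial over the half-rectangle $\hK_F^{s^*}$ and over the full $\hK_F$. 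This transfers the entire left-hand side onto $\hK_F^{s^*}$.

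In the case $s^*=+$, I apply \autoref{lem:norm_equiv_X0_X1_hK_s} to the pair $(\beta^+\hphi^{(0)},\hphi^{(1)})$ — noting $\beta^+\hphi^{(0)}\in X^{(0)}$ — to obtain $\beta^+\norm{\hnabla\hphi^{(0)}}_{L^2(\hK_F^+)}+\norm{\hnabla\hphi^{(1)}}_{L^2(\hK_F^+)}\lesssim\norm{\beta^+\hnabla\hphi^{(0)}+\hnabla\hphi^{(1)}}_{L^2(\hK_F^+)}$. On the plus side $\hbeta=\beta^+$, so the right-hand polynomial is exactly $\hbeta\hnabla\hphi$; applying \autoref{lem:stable_extensions_poly} to each component moves it to $\hK^+$, giving $\norm{\hbeta\hnabla\hphi}_{L^2(\hK^+)}\le\norm{\hbeta\hnabla\hphi}_{L^2(\hK)}$, and the first elementary inequality closes the estimate. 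In the case $s^*=-$, I instead feed $(\hphi^{(0)},(\beta^-)^{-1}\hphi^{(1)})$ into \autoref{lem:norm_equiv_X0_X1_hK_s}, so that the combination becomes $\hnabla\hphi^{(0)}+(\beta^-)^{-1}\hnabla\hphi^{(1)}=\hnabla\hphi$ on the minus side; \autoref{lem:stable_extensions_poly} then yields $\norm{\hnabla\hphi}_{L^2(\hK_F^-)}\lesssim\norm{\hnabla\hphi}_{L^2(\hK)}$. Multiplying the resulting bound $\norm{\hnabla\hphi^{(0)}}_{L^2(\hK_F^-)}+(\beta^-)^{-1}\norm{\hnabla\hphi^{(1)}}_{L^2(\hK_F^-)}\lesssim\norm{\hnabla\hphi}_{L^2(\hK)}$ by $\beta^+$ and using $\beta^+/\beta^-\ge1$ gives $\beta^+\norm{\hnabla\hphi^{(0)}}_{L^2(\hK_F^-)}+\norm{\hnabla\hphi^{(1)}}_{L^2(\hK_F^-)}\lesssim\beta^+\norm{\hnabla\hphi}_{L^2(\hK)}$, and the second elementary inequality finishes.

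The main obstacle I anticipate is the bookkeeping of the powers of $\beta^\pm$ so that the final constant is exactly $\frac{\beta^+}{\sqrt{\beta^-}}$ rather than something larger: the key is to choose, in each case, the scalar multiples of $\hphi^{(0)}$ and $\hphi^{(1)}$ fed into \autoref{lem:norm_equiv_X0_X1_hK_s} so that their combination reproduces precisely $\hbeta\hnabla\hphi$ (plus side) or $\hnabla\hphi$ (minus side) on the controlling side $s^*$, after which \autoref{lem:stable_extensions_poly} applies to a genuine polynomial. I would also be careful to note that the equivalence $\norm{\cdot}_{L^2(\hK_F)}\simeq\norm{\cdot}_{L^2(\hK_F^{s})}$ is used only for polynomials of bounded degree, where it is scale-invariant in $h$, and that \autoref{lem:stable_extensions_poly} is applied componentwise to the vector-valued gradient.
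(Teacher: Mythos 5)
Your proposal is correct and takes essentially the same route as the paper's own proof: the same case split on the side $s^*$ provided by \autoref{lem:stable_extensions_poly}, the same application of \autoref{lem:norm_equiv_X0_X1_hK_s} to suitably rescaled pairs in $X^{(0)}\times X^{(1)}$ on a fictitious half-rectangle (so that the combined polynomial equals $\hbeta\hnabla\hphi$ on $\hK_F^+$ or $\hnabla\hphi$ on $\hK_F^-$), and the same elementary $\beta^\pm$ inequalities to reach the factor $\beta^+/\sqrt{\beta^-}$. The only differences are cosmetic streamlinings: you inline \autoref{lem:stable_extensions} by applying \autoref{lem:stable_extensions_poly} componentwise to the gradient, and in the $s^*=-$ case you work directly on $\hK_F^-$ instead of the paper's detour through $\hK_F^+$ followed by an application of \autoref{lem:norm_equiv_X0_X1_hK_s} on both half-rectangles.
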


\begin{proof}
    The proof is divided into two parts. First, we prove the lemma in the case where \eqref{eqn:type_minus_equivalence} holds, then in the case where \eqref{eqn:type_plus_equivalence} holds. 
    Firstly, If \eqref{eqn:type_minus_equivalence} holds, then, by \autoref{lem:norm_equiv_X0_X1_hK_s},  we have 
    \begin{align}\label{eqn:case_pos_x0_x1} 
    \begin{split}
        \beta^+\norm{\hnabla \hphi^{(0)}}_{L^2(\hK)}+\norm{\hnabla \hphi^{(1)}}_{L^2(\hK)}
        &\le  \beta^+\norm{\hnabla \hphi^{(0)}}_{L^2(\hK_F)}+\norm{\hnabla \hphi^{(1)}}_{L^2(\hK_F)} \\ 
        &\lesssim \beta^+\norm{\hnabla \hphi^{(0)}}_{L^2(\hK_F^+)}+\norm{\hnabla \hphi^{(1)}}_{L^2(\hK_F^+)} \\ 
        &\lesssim  \norm{\hbeta \hnabla \hphi}_{L^2(\hK_F^+)} \lesssim \norm{\hbeta \hnabla \hphi}_{L^2(\hK_F)} \lesssim \norm{\hbeta \hnabla \hphi}_{L^2(\hK)} \\
        & \lesssim \sqrt{\hbeta^+} \norm{\sqrt{\hbeta} \hnabla \hphi}_{L^2(\hK)}  \lesssim \frac{\beta^+}{\sqrt{\beta^-}}\norm{\sqrt{\hbeta} \hnabla \hphi}_{L^2(\hK)}
    \end{split}
    \end{align}
which proves \eqref{eqn:norm_equiv_X0_X1_V} in this case.     
Secondly, assume \eqref{eqn:type_plus_equivalence} holds. We start from
    \begin{align}\label{eqn:potentially_complicated_x0_x1} 
    \begin{split}
        \beta^+\norm{\hnabla \hphi^{(0)}}_{L^2(\hK)}+\norm{\hnabla \hphi^{(1)}}_{L^2(\hK)}
        &\le  \beta^+\norm{\hnabla \hphi^{(0)}}_{L^2(\hK_F)}+\norm{\hnabla \hphi^{(1)}}_{L^2(\hK_F)} \\ 
        &\lesssim \beta^+\norm{\hnabla \hphi^{(0)}}_{L^2(\hK_F^+)}+\norm{\hnabla \hphi^{(1)}}_{L^2(\hK_F^+)} \\ 
        &\lesssim \beta^+\left(\norm{\hnabla \hphi^{(0)}}_{L^2(\hK_F^+)}+\norm{\frac{1}{\hbeta}\hnabla \hphi^{(1)}}_{L^2(\hK_F^+)}\right)\\ 
       & \lesssim  \beta^+\left(\norm{\hnabla \hphi^{(0)}}_{L^2(\hK_F)}+\norm{\frac{1}{\hbeta}\hnabla \hphi^{(1)}}_{L^2(\hK_F)}\right)        \\ 
        &\lesssim \beta^+ \norm{\hnabla \hphi}_{L^2(\hK_F)},
    \end{split}    
    \end{align}
where the last line follows from applying \autoref{lem:norm_equiv_X0_X1_hK_s} to both $\hK^{+}_F$ and $\hK^{-}_F$ as follows

\begin{align}
\left(\norm{\hnabla \hphi^{(0)}}_{L^2(\hK_F)}+\norm{\frac{1}{\hbeta}\hnabla \hphi^{(1)}}_{L^2(\hK_F)}\right)^2
    &\lesssim \norm{\hnabla \hphi^{(0)}}_{L^2(\hK_F)}^2+\norm{\frac{1}{\hbeta}\hnabla \hphi^{(1)}}_{L^2(\hK_F)}^2\notag \\ 
   & \lesssim \sum_{s\in\{+,-\}} {\left(\norm{\hnabla \hphi^{(0)}}_{L^2(\hK_F^s)}^2+\norm{\frac{1}{\beta^s}\hnabla \hphi^{(1)}}_{L^2(\hK_F^s)}^2\right)}\notag \\ 
& \lesssim 
\sum_{s\in\{+,-\}} 
\norm{\hnabla\hphi}_{L^2(\hK_F^s)}^2 \lesssim \norm{\hnabla\hphi}_{L^2(\hK_F)}^2.\notag 
\end{align}
Thus, applying \eqref{eqn:type_plus_equivalence} to \eqref{eqn:potentially_complicated_x0_x1}, we have
    \begin{equation}
        \beta^+\norm{\hnabla \hphi^{(0)}}_{L^2(\hK)}+\norm{\hnabla \hphi^{(1)}}_{L^2(\hK)}  \lesssim \beta^+ \norm{\hnabla \hphi}_{L^2(\hK_F)}  \lesssim \beta^+\norm{ \hnabla \hphi}_{L^2(\hK)} \le \frac{\beta^+}{\sqrt{\beta^-}} \norm{\sqrt{\hbeta} \hnabla \hphi}_{L^2(\hK)}\label{eqn:case_neg_x0_x1}
    \end{equation}
which also proves \eqref{eqn:norm_equiv_X0_X1_V}. 

\end{proof}

Now, we are ready to establish a trace theorem for Frenet IFE functions. 

\begin{theorem}\label{thm:trace_inverse_hK}
    Let $K$ be an interface element and let $\hV^{m}_{\beta}(\hK)$ be the piecewise polynomial Frenet IFE space defined in \eqref{eqn:discrete_interface_conditions}, then we have {the following trace inequality on $\hK$}
    \begin{equation}
        \norm{\hbeta \hnabla \hphi}_{L^2(\p\hK)} \lesssim 
        \frac{\beta^+}{\sqrt{\beta^-}} h^{-1/2}\norm{\sqrt{\hbeta} \hnabla \hphi}_{L^2(\hK)},\qquad \forall
        \hphi\in \hV^{m}_{\beta}(\hK) {.}
        \label{eqn:trace_inverse_hK}
    \end{equation}
\end{theorem}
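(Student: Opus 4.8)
The plan is to reduce the trace inequality for the genuinely non-polynomial IFE function $\hphi$ to the polynomial trace inequality of \autoref{lem:inverse_trace_hK}, by exploiting the decomposition of \autoref{lem:hV_decomposition} together with the piecewise-constant nature of $\hbeta=\beta\circ P$. Writing $\hphi=\hphi^{(0)}+\hbeta^{-1}\hphi^{(1)}$ with $\hphi^{(0)}\in X^{(0)}$ and $\hphi^{(1)}\in X^{(1)}$, I would first observe that, since $\hbeta$ equals the constant $\beta^{+}$ on $\hK^{+}$ and the constant $\beta^{-}$ on $\hK^{-}$, the product rule gives on each side of the interface the pointwise identity $\hbeta\,\hnabla(\hbeta^{-1}\hphi^{(1)})=\hnabla\hphi^{(1)}$. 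Hence $\hbeta\,\hnabla\hphi=\hbeta\,\hnabla\hphi^{(0)}+\hnabla\hphi^{(1)}$, where the right-hand side is now built from the \emph{genuine} (global) polynomials $\hphi^{(0)}$ and $\hphi^{(1)}$. This is the crucial structural observation: the IFE gradient has been rewritten as a piecewise-constant multiple of a polynomial gradient plus a polynomial gradient.

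Next I would estimate the boundary norm. By the triangle inequality on $L^{2}(\p\hK)$ and the bound $\hbeta\le\beta^{+}$ (which holds since $\beta^{+}\ge\beta^{-}$, applied separately on the two portions into which the interface splits $\p\hK$), one gets
\[
\norm{\hbeta\,\hnabla\hphi}_{L^2(\p\hK)}\le \beta^{+}\norm{\hnabla\hphi^{(0)}}_{L^2(\p\hK)}+\norm{\hnabla\hphi^{(1)}}_{L^2(\p\hK)}.
\]
Each component of $\hnabla\hphi^{(0)}$ and of $\hnabla\hphi^{(1)}$ is itself an element of $Q^{m}(\hK)$, since a partial derivative lowers the degree in one variable and leaves it unchanged in the other; thus \autoref{lem:inverse_trace_hK} applies componentwise and yields $\norm{\hnabla\hphi^{(0)}}_{L^2(\p\hK)}\lesssim h^{-1/2}\norm{\hnabla\hphi^{(0)}}_{L^2(\hK)}$ and likewise for $\hphi^{(1)}$. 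Combining these gives
\[
\norm{\hbeta\,\hnabla\hphi}_{L^2(\p\hK)}\lesssim h^{-1/2}\Big(\beta^{+}\norm{\hnabla\hphi^{(0)}}_{L^2(\hK)}+\norm{\hnabla\hphi^{(1)}}_{L^2(\hK)}\Big).
\]
Finally I would invoke \autoref{lem:norm_equiv_X0_X1_V} to bound the parenthesized quantity by $\tfrac{\beta^{+}}{\sqrt{\beta^{-}}}\norm{\sqrt{\hbeta}\,\hnabla\hphi}_{L^2(\hK)}$, which is exactly the right-hand side of \eqref{eqn:trace_inverse_hK} and completes the argument.

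The genuinely hard work is already contained in the earlier lemmas, so the main issue at this final stage is conceptual rather than computational. The point to justify carefully is that the polynomial trace inequality \autoref{lem:inverse_trace_hK} — proved for the \emph{curved} quadrilateral $\hK$ — may legitimately be applied to $\hnabla\hphi^{(0)}$ and $\hnabla\hphi^{(1)}$: although $\hphi$ is only piecewise polynomial, the factors $\hphi^{(0)}$ and $\hphi^{(1)}$ furnished by \autoref{lem:hV_decomposition} are honest polynomials on all of $\hK$, so their gradients restrict to $\p\hK$ with no interface-jump subtlety. The only other care needed is that the factor $\hbeta$ is handled by the crude bound $\hbeta\le\beta^{+}$ on each subdomain, which uses $\beta^{+}\ge\beta^{-}$ and incurs no loss beyond the $\beta^{+}$ already present; no small-cut or geometric estimate enters here, those having been absorbed into \autoref{lem:inverse_trace_hK} and \autoref{lem:norm_equiv_X0_X1_V}.
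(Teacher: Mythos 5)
Your proposal is correct and follows essentially the same route as the paper's own proof: the decomposition $\hphi=\hphi^{(0)}+\hbeta^{-1}\hphi^{(1)}$ from \autoref{lem:hV_decomposition}, the triangle inequality with $\hbeta\le\beta^+$, the polynomial trace inequality of \autoref{lem:inverse_trace_hK} applied to the gradients, and finally \autoref{lem:norm_equiv_X0_X1_V}. In fact you make explicit two details the paper leaves implicit --- the pointwise identity $\hbeta\,\hnabla(\hbeta^{-1}\hphi^{(1)})=\hnabla\hphi^{(1)}$ on each subdomain, and the componentwise applicability of \autoref{lem:inverse_trace_hK} to the gradient entries as elements of $Q^m(\hK)$ --- so no gap remains.
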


\begin{proof}
    Using \autoref{lem:hV_decomposition}, we can write $\hphi$ as $\hphi^{(0)}+\frac{1}{\hbeta}\hphi^{(1)}$ where $\hphi^{(0)}$ and $\hphi^{(1)}$ are polynomials. First,

    \begin{align*}  \norm{\hbeta \hnabla \hphi }_{L^2(\p\hK)}&\le 
    \norm{\hbeta\hnabla\hphi^{(0)}}_{L^2(\p\hK)}+ \norm{\hnabla\hphi^{(1)}}_{L^2(\p\hK)}
    \\ &\le 
    \beta^{+}\norm{\hnabla\hphi^{(0)}}_{L^2(\p\hK)}+ \norm{\hnabla\hphi^{(1)}}_{L^2(\p\hK)}.
    \end{align*}
Then, we use the trace inequality from \eqref{eqn:trace_inverse_Q_hK} and \autoref{lem:norm_equiv_X0_X1_V} to obtain 
\begin{align*}
\norm{\hbeta \hnabla \hphi }_{L^2(\p\hK)}
\lesssim h^{-1/2}\left( \beta^{+}\norm{\hnabla\hphi^{(0)}}_{L^2(\hK)}+ \norm{\hnabla\hphi^{(1)}}_{L^2(\hK)}\right)
\lesssim h^{-1/2}\frac{\beta^+}{\sqrt{\beta^-}}\norm{\sqrt{\hbeta}\hnabla\hphi}_{L^2(\hK)}
\end{align*}
which proves \eqref{eqn:trace_inverse_hK}. 
\end{proof}



We conclude  this section by recalling that if $\phi\in \V^{m}_{\beta}(K)$, then there exists $\hphi\in \hV^{m}_{\beta}(\hK)$ such that $\phi=\hphi\circ R$. Therefore, we obtain a trace inequality for
functions in $\V^{m}_{\beta}(K)$.

\begin{theorem}\label{thm:trace_inverse_K}
    Let $K$ be an interface element, then 
    \begin{equation}
        \norm{\beta \nabla \phi }_{L^2(\p K)} \lesssim 
        \frac{\beta^+}{\sqrt{\beta^-}} h^{-1/2}\norm{\sqrt{\beta} \nabla \phi}_{L^2(K)},\qquad \forall
        \phi\in \V^{m}_{\beta}(K).
        \label{eqn:trace_inverse_K}
    \end{equation}
\end{theorem}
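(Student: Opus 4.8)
The plan is to transfer the Frenet-coordinate trace inequality \eqref{eqn:trace_inverse_hK} established in \autoref{thm:trace_inverse_hK} back to the physical element $K$ through the Frenet map $P$ and its inverse $R$. Since every $\phi\in\V^m_\beta(K)$ can be written as $\phi=\hphi\circ R$ with $\hphi\in\hV^m_\beta(\hK)$ by the very definition of $\V^m_\beta(K)$, and since $\hbeta=\beta\circ P$ gives $\beta=\hbeta\circ R$, it suffices to show that each side of \eqref{eqn:trace_inverse_K} is comparable, with constants independent of $h$ and of the interface position, to the corresponding Frenet-coordinate quantity appearing in \eqref{eqn:trace_inverse_hK}. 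The chain rule applied to $\phi=\hphi\circ R$ yields $\nabla\phi\circ P=(\hD P)^{-T}\hnabla\hphi$, so the whole argument rests on the two pointwise facts already recorded in \autoref{sec:trace_inv}: the Jacobian equivalence \eqref{eqn:DP_v_equiv_v}, $\norm{\hD P\,\mathbf v}\simeq\norm{\mathbf v}$, and the bound $|\det(\hD P)|\simeq 1$.

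For the boundary term I would parametrize each of the four edges $\he$ of $\hK$ by an arclength map $\hgamma$, so that $e=P\circ\hgamma$ is the corresponding (straight) edge of $K$. The arclength element on $e$ is $\norm{\hD P(\hgamma)\,\hgamma'}\,dt\simeq\norm{\hgamma'}\,dt=dt$ by \eqref{eqn:DP_v_equiv_v}, and along the same edge $\norm{\nabla\phi\circ P}=\norm{(\hD P)^{-T}\hnabla\hphi}\simeq\norm{\hnabla\hphi}$, again by \eqref{eqn:DP_v_equiv_v} (applied to the inverse-transpose, whose singular values are likewise pinched between two positive constants). Combining these with $\beta\circ P=\hbeta$ and summing over the four edges gives $\norm{\beta\nabla\phi}_{L^2(\p K)}\simeq\norm{\hbeta\hnabla\hphi}_{L^2(\p\hK)}$.

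For the interior term the change of variables $\x=P(\hx)$ introduces the factor $|\det(\hD P)|\simeq 1$, while the integrand transforms exactly as above, so $\norm{\sqrt\beta\,\nabla\phi}_{L^2(K)}\simeq\norm{\sqrt{\hbeta}\,\hnabla\hphi}_{L^2(\hK)}$. Chaining the boundary equivalence, the Frenet trace inequality \eqref{eqn:trace_inverse_hK}, and the interior equivalence then produces \eqref{eqn:trace_inverse_K}. I do not anticipate a genuine obstacle here: the heavy lifting --- the decomposition of \autoref{lem:hV_decomposition} and the curved-edge trace inequality --- was already done to prove \autoref{thm:trace_inverse_hK}, and what remains is bookkeeping. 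The one point requiring care is the transpose-inverse Jacobian in the chain rule and the verification that every Jacobian and arclength factor is uniformly comparable to $1$; this is exactly what \eqref{eqn:DP_v_equiv_v} and $|\det(\hD P)|\simeq 1$ guarantee, uniformly in $h$ and in the interface position, under the regularity and curvature assumptions \eqref{eqn:norm_g_assumption} on $\g$.
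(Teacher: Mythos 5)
Your proposal is correct and follows essentially the same route as the paper's own proof: pull $\phi=\hphi\circ R$ back to Frenet coordinates, use the uniform Jacobian equivalences \eqref{eqn:DP_v_equiv_v} and $|\det(\hD P)|\simeq 1$ to show the boundary and interior norms on $K$ are comparable to those on $\hK$, and then apply \autoref{thm:trace_inverse_hK}. The only cosmetic difference is that you spell out the chain-rule factor $(\hD P)^{-T}$ and the edge arclength elements explicitly, whereas the paper cites the pointwise equivalence $\norm{\nabla\phi}\simeq\norm{\hnabla\hphi\circ R}$ and $\norm{DR}\simeq 1$ from \cite{adjeridHighOrderGeometry2024} and performs the same two changes of variables.
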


\begin{proof}
    First, we recall from \cite{adjeridHighOrderGeometry2024} that $\norm{\nabla\phi}\simeq \norm{\hnabla\hphi\circ R}$ and that $\norm{DR}\simeq 1$, {where $\norm{\cdot}$ is the Euclidean norm on $\mathbb{R}^2$.} Consequently, we have 
$$\norm{\beta \nabla \phi }_{L^2(\p K)}
\lesssim  \norm{\beta \hnabla \hphi \circ R }_{L^2(\p K)} \lesssim \norm{\hbeta \hnabla \hphi }_{L^2(\p \hK)} 
$$ 
where the last inequality follows from a change of variables. Now, applying \autoref{thm:trace_inverse_hK} to bound the right most term and using the change of variable $\hx= R(\x)$, we have
\begin{align*}
\norm{\beta \nabla \phi}_{L^2(\p K)}&\lesssim  \frac{\beta^+}{\sqrt{\beta^-}} h^{-1/2}\norm{\sqrt{\hbeta} \hnabla \hphi}_{L^2(\hK)} \lesssim 
\frac{\beta^+}{\sqrt{\beta^-}} h^{-1/2}\norm{\sqrt{\beta} \hnabla \hphi\circ R}_{L^2(K)}  \\ 
&\lesssim \frac{\beta^+}{\sqrt{\beta^-}} h^{-1/2}\norm{\sqrt{\beta} \nabla \phi}_{L^2(K)}.
\end{align*}
which proves the trace inequality stated in \eqref{eqn:trace_inverse_K}. 
\end{proof}

In summary, combining the result of \autoref{thm:trace_inverse_K} with the standard trace inequality for polynomials on non-interface elements, we know
that the following estimate holds:

\begin{equation} \norm{\beta \nabla \phi }_{L^2(\p K)} \le C_t 
\frac{\beta^+}{\sqrt{\beta^-}} h^{-1/2}\norm{\sqrt{\beta} \nabla \phi}_{L^2(K)},\ \forall \phi\in \V^{m}_{\beta}(\T_h),\ \forall K\in \T_h,
\label{eqn:global_trace_inequality}
\end{equation}
{where } $C_t>0$ is independent of the mesh size, the relative position of the interface to the mesh, and the coefficients $\beta^{\pm}$.

\section{The analysis of the Frenet IFE method} \label{sec:analysis}

In this section, we provide an \emph{a priori} error estimate for the Frenet IFE method \eqref{eqn:compact_discrete_weak_form} which was originally introduced in \cite{adjeridHighOrderGeometry2024}.
Our analysis follows the steps for obtaining the error estimates for the classical SIPDG method \cite{riviereDiscontinuousGalerkinMethods2008}. A similar approach can be found in the analysis of the partially penalized IFE method \cite{guoHigherDegreeImmersed2019,linPartiallyPenalizedImmersed2015a}. However, particularities in our methods warrant a few details in the derivation of the error estimates. We start from the assumption that the mesh $\T_h$ is sufficiently fine such that the results from the previous lemmas are all valid. Then, we consider the following norms on $\V^m_{\beta}(\T_h)$
\begin{equation}
\norm{v}_h^2= \sum_{K\in \T_h}\norm{\sqrt{\beta}\nabla v}_{L^2(K)}^2+ \frac{\gamma \sigma_0}{h}\sum_{e\in \E_h} \norm{\bb{v}_e}_{L^2(e)}^2, \label{eqn:def_norm_h}
\end{equation}
\begin{equation}
    \vertiii{v}_h^2= \norm{v}_h^2+ \frac{h}{\sigma_0 \gamma}\sum_{e\in \E_h} \norm{\cc{\beta \nabla v\cdot \n}_e}_{L^2(e)}^2, \label{eqn:def_energy_norm}
\end{equation}
and we use $P_hu$ to denote the $L^2$ projection of a function $u\in L^2(\O)$ onto $\V^m_{\beta}(\T_h)$. That is, 
$$P_hu\in \V^m_{\beta}(\T_h),\qquad \left(u-P_h u,v_h\right)_{\O} =0,\ \forall v_h\in \V^m_{\beta}(\T_h).$$
From the classical finite element estimates {on a non-interface element $K$ }\cite{ciarletFiniteElementMethod2002}, we know that 
\begin{equation}\left|u-P_hu\right|_{H^{i}(K)}\lesssim h^{m+1-i}|u|_{H^{m+1}(K)},\qquad 0\le i\le m.
    \label{eqn:proj_error_non_int}
\end{equation}
    On the other hand, if $K$ is an interface element, we have shown in \cite{adjeridHighOrderGeometry2024} that   
\begin{equation}\left|u-P_hu\right|_{\H^{i}(K)}\lesssim h^{m+1-i}\norm{u}_{\H^{m+1}(K_F)},\qquad 0\le i\le m,
\label{eqn:proj_error_on_KF}
\end{equation}
where $K_F$ is the fictitious element in the tubular neighborhood of $\Gamma$ as shown in \autoref{fig:K_F_illustration}. The next natural step would be to sum \eqref{eqn:proj_error_non_int} and \eqref{eqn:proj_error_on_KF} over all elements $K\in \T_h$, but $K_F$ is larger than $K$ in general. For this reason, we need to show that $K_F$ only intersect a (uniformly) finite number of elements.

\begin{figure}[htbp]
\centering 
\includegraphics[scale=.8]{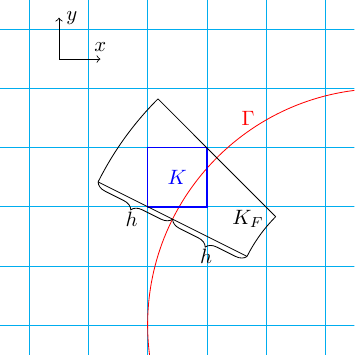}
\caption{{An illustration interface element $K$ (blue) and its fictitious element $K_F$ (black)}}
\label{fig:K_F_illustration}
\end{figure}

\begin{lemma}
    {Let $K$ be an interface element and let $K_F$ be its fictitious element, then $K_F$ intersects at most $7^2$ elements in $\T_h$.}
    \label{lem:finite_overlap}
\end{lemma}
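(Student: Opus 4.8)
The plan is to show that the physical fictitious element $K_F$ is contained in a region of diameter $O(h)$, and then to count how many cells of the uniform mesh such a region can meet. Since $\hK_F=[-h,h]\times[\xi_0,\xi_1]$ is convex, for any two preimages $\hx_1,\hx_2\in\hK_F$ I would integrate $\hD P$ along the straight segment joining them and use the bi-Lipschitz bound \eqref{eqn:DP_v_equiv_v} to get $\norm{P(\hx_1)-P(\hx_2)}\lesssim\norm{\hx_1-\hx_2}$, hence $\diam(K_F)\lesssim\diam(\hK_F)$. By \autoref{lem:xi1_minus_xi0}, $\xi_1-\xi_0\simeq h$, so $\diam(\hK_F)=\sqrt{4h^2+(\xi_1-\xi_0)^2}\simeq h$, and therefore $\diam(K_F)\le Ch$ with $C$ independent of $h$ and of the position of the interface relative to the mesh.

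Equivalently, and more explicitly, I would fix the near-center point $G=P(0,\tfrac{1}{2}(\xi_0+\xi_1))$ and bound $\norm{P(\eta,\xi)-G}\le\norm{\g(\xi)-\g(\tfrac{1}{2}(\xi_0+\xi_1))}+\abs{\eta}\le\norm{\g'}_{C^0}\,\tfrac{1}{2}(\xi_1-\xi_0)+h$ using $\norm{\n}=1$ and $\abs{\eta}\le h$. With $\norm{\g'}\simeq1$ and $\xi_1-\xi_0\simeq h$ this yields $K_F\subset\overline{B}(G,Ch)$ for an explicit $C$, which is the form most convenient for counting. (The edge case where $K$ wraps around the start and end of the parametrization is excluded by the periodic extension of $\g$ discussed earlier, so $K_F=P(\hK_F)$ is the single image of the convex box.)

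It remains to convert the enclosing ball into a cell count. Because $K\subset K_F$ is itself a mesh cell and $K_F\subset\overline{B}(G,Ch)$, the whole of $K_F$ lies in an axis-aligned bounding box of side at most $2Ch$. On the uniform mesh, whose cells have diameter $h$ (side length $h/\sqrt{2}$), such a box meets at most $\lceil 2\sqrt{2}\,C\rceil+1$ columns and equally many rows of cells, hence at most $(\lceil 2\sqrt{2}\,C\rceil+1)^2$ cells. The main --- and really the only --- obstacle is the bookkeeping of the constant: whereas the rest of the paper hides constants inside $\lesssim$, the statement asserts the precise bound $7^2$, so here I must track $C$ concretely through the bounds $\norm{\g'}\simeq1$, $h\kappa_\Gamma\le\tfrac{1}{2}$ and $\xi_1-\xi_0\simeq h$ and check that it stays below the threshold $3/\sqrt{2}$ that forces at most three layers of cells on each side of $K$, i.e. a $7\times 7$ block. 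Everything feeding into $\diam(K_F)\lesssim h$ is already established, so this final verification of the numerical constant is all that is needed.
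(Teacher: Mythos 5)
Your proposal proves the right kind of statement (a uniform bound on the overlap) but not the statement actually made, and the step you defer --- tracking the constant $C$ below $3/\sqrt{2}$ --- is precisely where the argument breaks. The constants you would need are not available: the equivalence $\xi_1-\xi_0\simeq h$ in \autoref{lem:xi1_minus_xi0} is quoted from the reference with no explicit constant, and $\norm{\g'}\simeq 1$ in \eqref{eqn:norm_g_assumption} is likewise only an equivalence. Even in the most favorable normalization (arc--length parametrization, so $\norm{\g'}=1$), your ball-around-the-midpoint bound overshoots: minimality of $[\xi_0,\xi_1]$ gives only the chord bound $\norm{\g(\xi_1)-\g(\xi_0)}\le 3h$ (a point of $K$ within $h$ of each endpoint, plus $\diam(K)=h$), and with $h\kappa_\Gamma\le\frac{1}{2}$ the arc can be as long as $4h\arcsin(3/4)\approx 3.4h$, so your radius is about $\frac{3.4}{2}h+h=2.7h$, above the threshold $\frac{3}{\sqrt{2}}h\approx 2.12h$; honest bookkeeping then yields a $9\times 9$ block, not $7\times 7$. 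The first, bi-Lipschitz variant via \eqref{eqn:DP_v_equiv_v} is strictly worse, since that equivalence carries its own unspecified constants. (Your argument does suffice for the only way the lemma is used later, in \autoref{lem:global_proj_error}, where any uniform overlap constant would do --- but it does not prove the lemma as stated.)

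The missing idea is that one should not bound $\diam(K_F)$ at all. The paper compares each point of $K_F$ with a point of $K$ \emph{on the same normal fiber}: any $\x\in K_F$ is $\x=\g(\xi)+\eta\,\n(\xi)$ with $|\eta|\le h$, and by the (minimal) construction of $K_F$ that same fiber meets $K$, i.e.\ there is $\eta'\in[-h,h]$ with $\y=\g(\xi)+\eta'\n(\xi)\in K$. Hence $\norm{\x-\y}=|\eta-\eta'|\le 2h$ exactly, with no hidden constant, so $K_F\subset K+\overline{B}(\0,2h)$. Since the cells have side length $h/\sqrt{2}$ and $2h=2\sqrt{2}$ side lengths $<3$ side lengths, the $7\times 7$ patch of elements centered at $K$ contains $K+\overline{B}(\0,2h)$, giving the count $7^2$. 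The key gain is that the tangential extent of $K_F$ --- exactly the quantity on which all your unknown constants are spent --- never needs to be estimated, because every fiber of $K_F$ already passes through $K$.
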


\begin{proof}
    Let $\x\in K_F$, then $\x=\g(\xi)+\eta \n(\xi)$ for some $\eta\in [-h,h]$ and $\xi\in[\xi_0,\xi_1]$. By construction of $K_F$, {there exists
    $\eta'\in[-h,h]$ such that $\y=\g(\xi)+\eta' \n(\xi)\in K$}.
     Then, $\norm{\x-\y}=|\eta-\eta'|\le 2h$. In conclusion, we have $K_F\subset K +\overline{B}(\mathbf{0},2h)$. 
    Next, let $\omega_K$ be the union of the $7^2$ elements surrounding $K$ (including $K$), then $K +\overline{B}(\mathbf{0},2h)\subset \omega_K$. Consequently, $K_F$ intersects at most $7^2$ elements.
\end{proof}

Now, we are ready to state the error estimate on the whole mesh $\T_h$. 

\begin{lemma}\label{lem:global_proj_error}
    Let $u\in \H^{m+1}(\O,\Gamma;\beta)$ and let $P_hu\in \V^m_{\beta}(\T_h)$ be its $L^2$ projection, then 
    \begin{equation}
        \left|u-P_h u\right|_{\H^{i}(\O)}\lesssim h^{m+1-i}\norm{u}_{\H^{m+1}(\O)},\qquad 0\le i\le m.
        \label{eqn:global_proj_error}
    \end{equation}
\end{lemma}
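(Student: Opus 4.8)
The plan is to reduce the global bound to the two local estimates \eqref{eqn:proj_error_non_int} and \eqref{eqn:proj_error_on_KF} by squaring and summing over all elements, the only real obstruction being that on an interface element the right-hand side of \eqref{eqn:proj_error_on_KF} is measured over the enlarged fictitious element $K_F$ rather than over $K$ itself. First I would observe that since $\V^m_{\beta}(\T_h)$ is a discontinuous (broken) space with no inter-element continuity constraints, the global $L^2$ projection $P_h$ decouples: $(P_hu)|_K$ coincides with the local $L^2$ projection of $u|_K$ onto $Q^m(K)$ when $K\in\T_h^n$ and onto $\V^m_{\beta}(K)$ when $K\in\T_h^i$. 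Hence the local bounds \eqref{eqn:proj_error_non_int} and \eqref{eqn:proj_error_on_KF} apply verbatim to $P_hu$. Because $P_hu$ is discontinuous, the left-hand seminorm must be read as the broken seminorm $\abs{u-P_hu}_{\H^i(\O)}^2=\sum_{K\in\T_h}\abs{u-P_hu}_{\H^i(K)}^2$, where on a non-interface element $\H^i(K)$ reduces to $H^i(K)$.

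Next I would square the two local estimates, split the sum over $\T_h=\T_h^n\cup\T_h^i$, and factor out the common power of $h$, obtaining
\begin{equation}
\abs{u-P_hu}_{\H^i(\O)}^2 \lessim h^{2(m+1-i)}\Big(\sum_{K\in\T_h^n}\abs{u}_{H^{m+1}(K)}^2 + \sum_{K\in\T_h^i}\norm{u}_{\H^{m+1}(K_F)}^2\Big). \notag
\end{equation}
The first sum is immediately controlled by $\abs{u}_{H^{m+1}(\O)}^2\le\norm{u}_{\H^{m+1}(\O)}^2$ since the non-interface elements are disjoint.

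The crux is the second sum, and here I would invoke the containment $K_F\subset\omega_K$ established inside the proof of \autoref{lem:finite_overlap}, where $\omega_K$ is the union of the $7^2$ elements surrounding $K$. This gives $\norm{u}_{\H^{m+1}(K_F)}^2\le\norm{u}_{\H^{m+1}(\omega_K)}^2=\sum_{K'\subset\omega_K}\norm{u}_{\H^{m+1}(K')}^2$, after which I would exchange the order of summation: for a fixed element $K'$, the relation $K'\subset\omega_K$ is symmetric on the uniform mesh, forcing $K\in\omega_{K'}$, so each $K'$ is counted at most $7^2$ times. Consequently $\sum_{K\in\T_h^i}\norm{u}_{\H^{m+1}(K_F)}^2\lessim\sum_{K'\in\T_h}\norm{u}_{\H^{m+1}(K')}^2=\norm{u}_{\H^{m+1}(\O)}^2$, and substituting back and taking square roots yields \eqref{eqn:global_proj_error}. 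The main obstacle is exactly this bounded-overlap bookkeeping for the fictitious elements: \autoref{lem:finite_overlap} supplies the one-sided containment $K_F\subset\omega_K$, while the uniform finiteness of the covering multiplicity — that no element is swallowed by more than $7^2$ fictitious elements — is what prevents the overcounting from degrading the convergence rate.
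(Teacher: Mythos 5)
Your proof is correct and follows essentially the same route as the paper, whose proof is simply the one-line observation that the result follows from \eqref{eqn:proj_error_non_int}, \eqref{eqn:proj_error_on_KF} and \autoref{lem:finite_overlap}. You have merely made explicit the details the paper leaves implicit: the decoupling of the broken $L^2$ projection, the squaring-and-summing over $\T_h^n\cup\T_h^i$, and the bounded-overlap bookkeeping $K_F\subset\omega_K$ with the symmetric counting argument that each element is covered by at most $7^2$ fictitious elements.
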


\begin{proof}
This follows immediately from \eqref{eqn:proj_error_non_int}, \eqref{eqn:proj_error_on_KF} and \autoref{lem:finite_overlap}.
\end{proof}

\begin{lemma}\label{lem:proj_energy_error}
Let $u\in \H^{m+1}(\O,\Gamma;\beta)$ and let $P_hu\in \V^m_{\beta}(\T_h)$ be its $L^2$ projection, then 

\begin{equation}
    \vertiii{u-P_h u}_h \lesssim \frac{\beta^+}{\sqrt{\beta^-}} h^m \norm{u}_{\H^{m+1}(\O)}.
    \label{eqn:proj_energy_error}
\end{equation}
\end{lemma}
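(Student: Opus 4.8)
The plan is to bound the three contributions to $\vertiii{u-P_hu}_h^2$ separately, writing $e_h=u-P_hu$: the broken weighted energy seminorm $\sum_{K}\norm{\sqrt{\beta}\nabla e_h}_{L^2(K)}^2$, the penalty (jump) term $\frac{\gamma\sigma_0}{h}\sum_e\norm{\bb{e_h}_e}_{L^2(e)}^2$, and the flux term $\frac{h}{\sigma_0\gamma}\sum_e\norm{\cc{\beta\nabla e_h\cdot\n}_e}_{L^2(e)}^2$. For the first piece I would use $\beta\le\beta^+$ together with \autoref{lem:global_proj_error} at $i=1$ to get $\sum_K\norm{\sqrt{\beta}\nabla e_h}_{L^2(K)}^2\le\beta^+|e_h|_{\H^1(\O)}^2\lesssim\beta^+h^{2m}\norm{u}_{\H^{m+1}(\O)}^2$. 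Since $\gamma=(\beta^+)^2/\beta^-\ge\beta^+$, this is already of the desired form $\gamma\,h^{2m}\norm{u}_{\H^{m+1}(\O)}^2$.

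For the two edge terms the key tool is a \emph{scaled} (non-inverse) trace inequality on the immersed Sobolev space, namely $\norm{w}_{L^2(e)}^2\lesssim h^{-1}\norm{w}_{L^2(K)}^2+h\,|w|_{\H^1(K)}^2$ for $w\in\H^1(K,\Gamma;\beta)$, with a constant independent of the position of $\Gamma$ relative to $K$. Applying it to $w=e_h$, and, on each subdomain where $\beta$ is constant so that $\beta\nabla e_h$ is a constant multiple of $\nabla e_h$ on $K^\pm$, also to $w=\nabla e_h$, and then inserting the local projection errors \eqref{eqn:proj_error_on_KF} (and \eqref{eqn:proj_error_non_int} on non-interface elements) at the orders $i=0,1,2$, yields on each element $\norm{e_h}_{L^2(\p K)}^2\lesssim h^{2m+1}\norm{u}_{\H^{m+1}(K_F)}^2$ and $\norm{\nabla e_h}_{L^2(\p K)}^2\lesssim h^{2m-1}\norm{u}_{\H^{m+1}(K_F)}^2$. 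I would also use that $u$ is continuous across interior mesh edges (the only jump of $u$ is across $\Gamma$, not across mesh edges), so $\bb{u}_e=0$ and $\norm{\bb{e_h}_e}_{L^2(e)}$ is controlled by the two one-sided traces of $e_h$.

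Summing over edges and converting $\sum_K\norm{u}_{\H^{m+1}(K_F)}^2\lesssim\norm{u}_{\H^{m+1}(\O)}^2$ via the finite-overlap \autoref{lem:finite_overlap}, the penalty term becomes $\frac{\gamma\sigma_0}{h}\sum_e\norm{\bb{e_h}_e}_{L^2(e)}^2\lesssim\frac{\gamma}{h}\sum_K h^{2m+1}\norm{u}_{\H^{m+1}(K_F)}^2\lesssim\gamma\,h^{2m}\norm{u}_{\H^{m+1}(\O)}^2$. For the flux term, the bound $\norm{\cc{\beta\nabla e_h\cdot\n}_e}_{L^2(e)}^2\lesssim(\beta^+)^2\sum_{K\ni e}\norm{\nabla e_h}_{L^2(e\cap K)}^2$ gives $\frac{h}{\sigma_0\gamma}\sum_e\norm{\cc{\beta\nabla e_h\cdot\n}_e}_{L^2(e)}^2\lesssim\frac{(\beta^+)^2}{\gamma}\,h\cdot h^{2m-1}\norm{u}_{\H^{m+1}(\O)}^2=\beta^-\,h^{2m}\norm{u}_{\H^{m+1}(\O)}^2$, which is again dominated by $\gamma\,h^{2m}\norm{u}_{\H^{m+1}(\O)}^2$ since $\beta^-\le\gamma$. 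Collecting the three estimates yields $\vertiii{e_h}_h^2\lesssim\gamma\,h^{2m}\norm{u}_{\H^{m+1}(\O)}^2$, and taking square roots gives \eqref{eqn:proj_energy_error} with $\sqrt{\gamma}=\beta^+/\sqrt{\beta^-}$.

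The main obstacle is the scaled trace inequality on the immersed space with a constant uniform in the interface position: the inverse trace inequality of \autoref{thm:trace_inverse_K} is tailored to IFE functions and does not apply to the generic error $e_h=u-P_hu$, so a separate forward trace estimate on $\H^{s}(K,\Gamma;\beta)$ must be invoked or derived, taking care that small-cut configurations do not degrade the constant — precisely the degeneration that the fictitious element $K_F$ and the geometric estimates of \autoref{sec:trace_inv} are designed to rule out. The remaining steps, by contrast, are routine bookkeeping of the powers of $h$ and of $\beta^{\pm}$.
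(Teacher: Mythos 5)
Your proposal follows essentially the same route as the paper's proof: split $\vertiii{u-P_hu}_h$ into the broken gradient, jump, and flux pieces, bound each via \autoref{lem:global_proj_error} together with a scaled (non-inverse) trace inequality applied elementwise, and track the powers of $\beta^{\pm}$ through $\gamma=(\beta^+)^2/\beta^-$. The one ``obstacle'' you flag is dispatched in the paper simply by noting that $(u-P_hu)|_K\in H^1(K)$ on each full rectangular element $K$ (the immersed space is $H^1$-conforming across $\Gamma$), so the standard Sobolev trace theorem on $K$ applies with a constant independent of where the interface cuts $K$.
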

\begin{proof} For conciseness, we let $w=u-P_hu$. Then, by \autoref{lem:global_proj_error}, we have 

\begin{equation}
    \sum_{K\in \T_h} \norm{\sqrt{\beta} \nabla w}_{L^2(K)} \le \sqrt{\beta^{+}}\sum_{K\in \T_h} \norm{ \nabla w}_{L^2(K)} \lesssim \sqrt{\beta^+} h^m  \norm{u}_{\H^{m+1}(\O)}.\label{eqn:nabla_w_bound}
\end{equation}
{Since $w|_K\in H^1(K)$ for any element $K$, the Sobolev trace theorem and \autoref{lem:global_proj_error} yields }
\begin{align}
\sum_{e\in \E_h} \norm{\bb{w}_e}_{L^2(e)}
&\lesssim \sum_{K\in \T_h} \norm{w\big|_K}_{L^2(\p K)} \lesssim \sum_{K\in \T_h} \left(h^{-1/2}\norm{w}_{L^2( K)} +h^{1/2}\norm{\nabla w}_{L^2( K)} \right) \notag\\
&\lesssim h^{m+1/2} \norm{u}_{\H^{m+1}(\O)}.
\label{eqn:jump_w_bound}
\end{align}
Similarly, we have 
\begin{equation}
    \sum_{e\in \E_h} \norm{ \cc{\beta\nabla w\cdot \n}_e}_{L^2(e)} \lesssim \beta^+h^{m-1/2}\norm{u}_{\H^{m+1}(\O)}.
    \label{eqn:jump_nabla_w_bound}
\end{equation}
Finally, we combine \eqref{eqn:nabla_w_bound}-\eqref{eqn:jump_nabla_w_bound} and \eqref{eqn:def_energy_norm} to obtain
$$\vertiii{w}_h^2\lesssim h^{2m}\norm{u}_{\H^{m+1}(\O)}^2 \left( \beta^+ +\sigma_0 \gamma +\frac{1}{\sigma_0 \gamma}(\beta^+)^2\right)\lesssim \frac{(\beta^+)^2}{\beta^-} h^{2m}\norm{u}_{\H^{m+1}(\O)}^2,
$$
where the last inequality follows from $\gamma=\frac{(\beta^+)^2}{\beta^-}$. The proof is completed by taking the square root of the quantities on of the above estimate. 
\end{proof}

{The following is about the coercivity of the bilinear form used in the Frenet IFE method.} 
 
\begin{theorem}\label{thm:coercivity}
    {For $\sigma_0>0$ large enough}, we have 
    \begin{align*}
        a_h(u_h,u_h) \ge \frac{1}{4}\vertiii{u_h}_h^2,\qquad \forall u_h\in \V^m_{\beta}(\T_h). 
    \end{align*}
\end{theorem}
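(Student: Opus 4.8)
The plan is to follow the classical SIPDG coercivity argument, with the trace inequality \eqref{eqn:global_trace_inequality} supplying the one essential new ingredient. Setting $u=v=u_h$ in \eqref{eqn:a_h_expanded} and recalling \eqref{eqn:def_norm_h}–\eqref{eqn:def_energy_norm}, I would first collect the terms as
$$a_h(u_h,u_h)=\sum_{K\in\T_h}\norm{\sqrt{\beta}\nabla u_h}_{L^2(K)}^2+\frac{\sigma_0\gamma}{h}\sum_{e\in\E_h}\norm{\bb{u_h}_e}_{L^2(e)}^2-2\sum_{e\in\E_h}\left(\cc{\beta\nabla u_h\cdot\n}_e,\bb{u_h}_e\right)_e.$$
The first two sums are exactly $\norm{u_h}_h^2$, so the whole task reduces to absorbing the consistency term (the last sum) into the three nonnegative pieces comprising $\vertiii{u_h}_h^2$.

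For the consistency term I would apply Cauchy--Schwarz edgewise and then across the edge sum, splitting the weight as $1=\sqrt{h/(\sigma_0\gamma)}\cdot\sqrt{\sigma_0\gamma/h}$, to obtain for any $\theta>0$
$$2\sum_{e\in\E_h}\left|\left(\cc{\beta\nabla u_h\cdot\n}_e,\bb{u_h}_e\right)_e\right|\le \theta\,\frac{h}{\sigma_0\gamma}\sum_{e\in\E_h}\norm{\cc{\beta\nabla u_h\cdot\n}_e}_{L^2(e)}^2+\frac1\theta\,\frac{\sigma_0\gamma}{h}\sum_{e\in\E_h}\norm{\bb{u_h}_e}_{L^2(e)}^2.$$
The second term on the right is $\tfrac1\theta$ times the penalty part of $\vertiii{u_h}_h^2$, and the first is $\theta$ times its flux part, so it remains only to dominate the flux part by the gradient energy.

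This is where \eqref{eqn:global_trace_inequality} enters. Bounding $\norm{\cc{\beta\nabla u_h\cdot\n}_e}_{L^2(e)}$ by the two one-sided traces $\norm{\beta\nabla u_h}_{L^2(\partial K)}$, using that each interior edge is shared by two elements and each element has boundedly many edges (shape regularity of $\T_h$), and then invoking the trace inequality on every $K$, I would get
$$\frac{h}{\sigma_0\gamma}\sum_{e\in\E_h}\norm{\cc{\beta\nabla u_h\cdot\n}_e}_{L^2(e)}^2\lesssim \frac{h}{\sigma_0\gamma}\,C_t^2\,\frac{(\beta^+)^2}{\beta^-}\,h^{-1}\sum_{K\in\T_h}\norm{\sqrt{\beta}\nabla u_h}_{L^2(K)}^2.$$
The crucial point is that the penalty weight $\gamma=\frac{(\beta^+)^2}{\beta^-}$ was chosen precisely to cancel the factor $\frac{(\beta^+)^2}{\beta^-}$ produced by \eqref{eqn:global_trace_inequality}, so the right-hand side equals $\frac{C C_t^2}{\sigma_0}\sum_{K}\norm{\sqrt{\beta}\nabla u_h}_{L^2(K)}^2$ with a constant independent of $h$ and of $\beta^\pm$.

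Finally I would collect everything: writing $S=\sum_{K}\norm{\sqrt{\beta}\nabla u_h}_{L^2(K)}^2$, $P$ for the penalty sum, and $F$ for the flux sum (so $\vertiii{u_h}_h^2=S+P+F$ and $F\le \frac{CC_t^2}{\sigma_0}S$), the above yields $a_h(u_h,u_h)\ge (1-\theta\tfrac{CC_t^2}{\sigma_0})S+(1-\tfrac1\theta)P$. Taking $\theta=\tfrac43$ and then $\sigma_0$ large enough that $\frac{CC_t^2}{\sigma_0}(\theta+\tfrac14)\le\tfrac34$ makes the coefficients of $S$ and $P$ dominate those in $\tfrac14(S+P+F)$, the $\tfrac14F$ being reabsorbed through $F\le\frac{CC_t^2}{\sigma_0}S$; this gives $a_h(u_h,u_h)\ge\tfrac14\vertiii{u_h}_h^2$. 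I expect the main obstacle to be this last passage: one must convert the edge flux averages to element gradient norms without losing sharpness, and verify that the $\beta$-weights, together with the specific scaling $\gamma=(\beta^+)^2/\beta^-$ and the $\tfrac{\beta^+}{\sqrt{\beta^-}}$ factor in \eqref{eqn:global_trace_inequality}, combine into a coercivity constant uniform in both the mesh and the contrast $\beta^+/\beta^-$.
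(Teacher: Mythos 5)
Your proposal is correct and follows essentially the same route as the paper: expand $a_h(u_h,u_h)$, absorb the flux--jump coupling term via Cauchy--Schwarz and Young's inequality with a free parameter, invoke the trace inequality \eqref{eqn:global_trace_inequality} so that the choice $\gamma=(\beta^+)^2/\beta^-$ cancels the $\beta$-dependence, and then tune the Young parameter and $\sigma_0$ to recover the full energy norm with constant $\tfrac14$. The only differences are bookkeeping (the paper uses a parameter $\alpha=2C_t^2$ and reinstates the flux part of $\vertiii{\cdot}_h$ through a separate bound at the end, while you fold it into the final comparison with $\theta=\tfrac43$), so both arguments are the same in substance.
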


\begin{proof}
    From the definition of $a_h$, we have 

    \begin{equation}a_h(u_h,u_h) = \sum_{K\in T_h} \norm{\sqrt{\beta}\nabla u_h}_{L^2(K)}^2 -2\sum_{e\in \E_h} \left\langle\cc{\beta \nabla u_h\cdot \n}_e,\bb{u_h}_e\right\rangle_{e}  +\frac{\gamma\sigma_0}{h} \sum_{e\in \E_h}\norm{\bb{u_h}_e}_{L^2(e)}^2.
        \label{eqn:diagonal_a_h}
    \end{equation}
Then, by the triangle inequality, Cauchy-Schwarz inequality and Young inequality, we have 
\begin{align}\left|\sum_{e\in \E_h} \left\langle\cc{\beta \nabla u_h\cdot \n}_e,\bb{u_h}_e\right\rangle_{e}\right| &\le \sum_{e\in \E_h} \norm{\cc{\beta \nabla u_h\cdot \n}_e}_{L^2(e)} \norm{\bb{u_h}_e}_{L^2(e)}\notag 
    \\ 
   & \le \frac{h}{\alpha \gamma} \sum_{e\in \E_h} \norm{\cc{\beta \nabla u_h\cdot \n}_e}_{L^2(e)}^2 +\frac{\gamma \alpha}{2h}\sum_{e\in \E_h}\norm{\bb{u_h}_e}_{L^2(e)}^2,
   \label{eqn:flux_term_first_bound}
\end{align}
where $\alpha>0$. Next, we use the trace inequality {in \autoref{thm:trace_inverse_K} }to obtain

\begin{equation}
    \frac{h}{ \gamma}  \sum_{e\in \E_h} \norm{\cc{\beta \nabla u_h\cdot \n}_e}_{L^2(e)}^2 \le 
    \frac{h}{\gamma}\sum_{K\in \T_h} \norm{\beta \nabla u_h\big|_K}_{L^2(\p K)}^2
     \le C_t^2 \sum_{K\in \T_h} \norm{\sqrt{\beta}\nabla u_h}_{L^2(K)}^2.
     \label{eqn:flux_term_second_bound}
\end{equation}
Thus, by substituting \eqref{eqn:flux_term_first_bound} and \eqref{eqn:flux_term_second_bound} into \eqref{eqn:diagonal_a_h}, we find
\begin{equation}
    a_h(u_h,u_h)\ge \left(1-\frac{C_t^2}{\alpha}\right) \sum_{K\in \T_h}\norm{\sqrt{\beta}\nabla u_h}_{L^2(K)}^2 +\frac{\gamma\left(2\sigma_0 -\alpha\right)}{2h}\sum_{e\in \E_h}\norm{\bb{u_h}_e}_{L^2(e)}^2. \notag
\end{equation}
In particular, for $\alpha=2C_t^2$, we have 
\begin{equation}
    a_h(u_h,u_h)\ge \frac{1}{2}\sum_{K\in \T_h}\norm{\sqrt{\beta}\nabla u_h}_{L^2(K)}^2 +\frac{\gamma\left(\sigma_0 -C_t^2\right)}{h}\sum_{e\in \E_h}\norm{\bb{u_h}_e}_{L^2(e)}^2.
    \label{eqn:coercivity_norm_h}
\end{equation}
On the other hand, the inequality \eqref{eqn:flux_term_second_bound} leads to 
\begin{equation}
    \frac{h}{\sigma_0 \gamma}  \sum_{e\in \E_h} \norm{\cc{\beta \nabla u_h\cdot \n}_e}_{L^2(e)}^2 \le \frac{C_t^2}{\sigma_0} a(u_h,u_h).
    \label{eqn:flux_term_third_bound}
\end{equation}
Therefore, by taking $\sigma_0>C_t^2+\frac{1}{2}$ and adding \eqref{eqn:flux_term_third_bound} to \eqref{eqn:coercivity_norm_h}, we obtain  

\begin{equation}
    2 a_h(u_h,u_h)\ge \frac{1}{2}\vertiii{u_h}^2_h,\notag
\end{equation}
leads to the desired result. 
\end{proof}

{Finally, we present the optimal error estimates for the Frenet IFE method in the energy {norm $\vertiii{\cdot}_h$ as well as in the $L^2$ norm} in the following theorem, which are similar to those in Theorems 6.2 and 6.3 in \cite{guoHigherDegreeImmersed2019}, and their proof follows exactly the same arguments 
together with the application of \autoref{lem:proj_energy_error} and 
\autoref{thm:coercivity}.} Consequently, we have opted to omit the proof. 

\begin{theorem}\label{thm:main_error_theorem}
    Let $u\in \H^{m+1}(\O,\Gamma;\beta)$ be a solution to the interface problem \eqref{eqn:Interface_problem}-\eqref{eqn:extended_jump_condition}. Assume that the mesh is fine enough for the results in the previous sections to hold, and that $\sigma_0$ is large enough for \autoref{thm:coercivity} to hold. Then, 
    $$\vertiii{u-u_h}_h\lesssim \frac{\beta^+}{\sqrt{\beta^-}}h^m \norm{u}_{\H^{m+1}(\O)},$$ 
    and 
    $$\norm{u-u_h}_{L^2(\O)}\lesssim \left(\frac{\beta^+}{\beta^-}\right)^2h^{m+1} \norm{u}_{\H^{m+1}(\O)}.$$
\end{theorem}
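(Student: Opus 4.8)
The plan is to follow the classical a priori analysis for symmetric interior penalty methods, assembling four ingredients: the consistency of the scheme (Galerkin orthogonality), the coercivity already proved in \autoref{thm:coercivity}, a continuity estimate for $a_h$ in the energy norm, and the projection bounds \autoref{lem:global_proj_error} and \autoref{lem:proj_energy_error}; the $L^2$ bound is then obtained by an Aubin--Nitsche duality argument. I would first record Galerkin orthogonality: since $u$ solves the weak problem \eqref{eqn:a_h_L_h_form}, $u_h$ solves \eqref{eqn:compact_discrete_weak_form}, and $\V^m_\beta(\T_h)$ is locally conforming (so $\V^m_\beta(\T_h)\subset\H^{m+1}(\T_h,\Gamma;\beta)$), subtracting gives $a_h(u-u_h,v_h)=0$ for every $v_h\in\V^m_\beta(\T_h)$.

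Next I would prove a continuity (boundedness) lemma: $|a_h(w,v)|\lesssim\vertiii{w}_h\,\vertiii{v}_h$ for all $w,v$ in the broken space. The volume term is handled by a $\beta$-weighted Cauchy--Schwarz, the penalty term directly from the definition of $\norm{\cdot}_h$, and each flux term by pairing the average-of-flux factor, weighted by $\sqrt{h/(\sigma_0\gamma)}$, against the jump factor, weighted by $\sqrt{\sigma_0\gamma/h}$, exactly as the two pieces of $\vertiii{\cdot}_h$ in \eqref{eqn:def_energy_norm} are arranged. Writing $\theta=u_h-P_hu\in\V^m_\beta(\T_h)$, coercivity and orthogonality give $\tfrac14\vertiii{\theta}_h^2\le a_h(\theta,\theta)=a_h(P_hu-u,\theta)\lesssim\vertiii{u-P_hu}_h\,\vertiii{\theta}_h$, hence $\vertiii{\theta}_h\lesssim\vertiii{u-P_hu}_h$. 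The triangle inequality with \autoref{lem:proj_energy_error} then yields the energy estimate $\vertiii{u-u_h}_h\lesssim\frac{\beta^+}{\sqrt{\beta^-}}h^m\norm{u}_{\H^{m+1}(\O)}$.

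For the $L^2$ bound I would introduce the dual interface problem $-\nabla\cdot(\beta\nabla z)=u-u_h$ on $\O^\pm$ with $z|_{\partial\O}=0$ and $\bb{z}_\Gamma=\bb{\beta\partial_\n z}_\Gamma=0$, whose solution lies in $\H^2(\O,\Gamma;\beta)$ with $\norm{z}_{\H^2(\O)}\lesssim(\beta^-)^{-1}\norm{u-u_h}_{L^2(\O)}$. Using symmetry of $a_h$ and consistency of the dual problem gives $\norm{u-u_h}^2_{L^2(\O)}=a_h(u-u_h,z)$, and Galerkin orthogonality allows $z$ to be replaced by $z-P_hz$. Continuity then yields $\norm{u-u_h}^2_{L^2(\O)}\lesssim\vertiii{u-u_h}_h\,\vertiii{z-P_hz}_h$; inserting the energy estimate above together with the $\H^2$-specialization of \autoref{lem:proj_energy_error}, namely $\vertiii{z-P_hz}_h\lesssim\frac{\beta^+}{\sqrt{\beta^-}}h\norm{z}_{\H^2(\O)}$, and absorbing one factor of $\norm{u-u_h}_{L^2(\O)}$, produces the stated rate $\bigl(\tfrac{\beta^+}{\beta^-}\bigr)^2h^{m+1}\norm{u}_{\H^{m+1}(\O)}$.

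The main obstacle is not the skeleton, which is standard, but the precise tracking of the $\beta^\pm$ weights so that the continuity constant, the trace inequality of \autoref{thm:trace_inverse_K}, the energy projection bound, and the dual regularity constant multiply out to exactly $\frac{\beta^+}{\sqrt{\beta^-}}$ and $\bigl(\frac{\beta^+}{\beta^-}\bigr)^2$; in particular, the extra factor $(\beta^-)^{-1}$ beyond the naive $\frac{(\beta^+)^2}{\beta^-}$ enters only through the $\beta$-weighted regularity of the dual problem. A second delicate point is reconciling the flux-term convention of \eqref{eqn:a_h_expanded}, written with $\bb{\beta\partial_\n u}_e\cc{v}_e$, with the average-of-flux form in \eqref{eqn:def_energy_norm} and in \autoref{thm:coercivity}; this uses the identity $\bb{ab}=\cc{a}\bb{b}+\bb{a}\cc{b}$ together with the fact that the exact solution satisfies $\bb{u}_e=\bb{\beta\partial_\n u}_e=0$ across interior mesh edges, which is also what underpins the consistency step.
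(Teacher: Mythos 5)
Your proposal is correct and follows essentially the same route as the paper, which simply defers to Theorems 6.2 and 6.3 of \cite{guoHigherDegreeImmersed2019}: those proofs are exactly your skeleton of Galerkin orthogonality, coercivity (\autoref{thm:coercivity}), continuity in $\vertiii{\cdot}_h$, the projection bound (\autoref{lem:proj_energy_error}), and an Aubin--Nitsche duality argument for the $L^2$ rate. The only point you assert rather than prove is the $\beta$-weighted dual regularity bound $\norm{z}_{\H^2(\O)}\lesssim(\beta^-)^{-1}\norm{u-u_h}_{L^2(\O)}$, which is the same elliptic-regularity input the cited reference relies on, and your bookkeeping of the $\beta^{\pm}$ factors then reproduces the stated constants $\frac{\beta^+}{\sqrt{\beta^-}}$ and $\left(\frac{\beta^+}{\beta^-}\right)^2$ exactly.
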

\begin{proof}See Theorem 6.2 and 6.3 in \cite{guoHigherDegreeImmersed2019}.
\end{proof}

\section{Conclusion}
In this paper, we present an error analysis for the Frenet immersed finite element (IFE) method based on the 
standard SIPDG formulation. This method was previously developed in \cite{adjeridHighOrderGeometry2024} for solving an elliptic interface problem. The Frenet IFE space used in this method is conforming to the space in the weak form of the 
interface problem so that it does not use any penalty on  the interface. The analysis has two technical contributions. The first result is a trace inequality for IFE functions on the Frenet interface element $\hK$ associated with each interface element $K$, where IFE functions are piecewise polynomials and $\hK$ is a quadrilateral with curved sides. Then, on every interface element, a trace inequality is derived for Frenet IFE functions which are not polynomials in general. This trace inequality is critical for establishing the coercivity of the bilinear form in the Frenet IFE method. The optimal error estimates for the Frenet IFE method follow from the standard analysis framework available in the literature. 

%

\printbibliography


%
\end{document}